\theoremstyle{definition}
\newcounter{maincoro}
\newtheorem{maincor}[maincoro]{Corollary}
\newtheorem{theorem}{Theorem}[section]
\newtheorem{lemma}[theorem]{Lemma}
\newtheorem{proposition}[theorem]{Proposition}
\newtheorem{corollary}[theorem]{Corollary}
\theoremstyle{definition}
\newcounter{maintheorem}
\newtheorem{mainth}[maintheorem]{Theorem}
\newtheorem{definition}[theorem]{Definition}
\newtheorem{example}[theorem]{Example}
\theoremstyle{remark}
\newtheorem{remark}[theorem]{Remark}
\numberwithin{equation}{section}
\newcommand{\R}{\mathbb{R}}
\newcommand{\C}{\mathbb{C}}
\newcommand{\N}{\mathbb{N}}
\newcommand{\K}{\mathbb{K}}
\renewcommand{\tocsection}[3]{%
	\indentlabel{\@ifnotempty{#2}{\bfseries\ignorespaces#1 #2\quad}}\bfseries#3}
\renewcommand{\tocsubsection}[3]{%
	\indentlabel{\@ifnotempty{#2}{\ignorespaces#1 #2\quad}}#3}
\newcommand\@dotsep{4.5}
\def\@tocline#1#2#3#4#5#6#7{\relax
	\ifnum #1>\c@tocdepth 
	\else
	\par \addpenalty\@secpenalty\addvspace{#2}%
	\begingroup \hyphenpenalty\@M
	\@ifempty{#4}{%
		\@tempdima\csname r@tocindent\number#1\endcsname\relax
	}{%
		\@tempdima#4\relax
	}%
	\parindent\z@ \leftskip#3\relax \advance\leftskip\@tempdima\relax
	\rightskip\@pnumwidth plus1em \parfillskip-\@pnumwidth
	#5\leavevmode\hskip-\@tempdima{#6}\nobreak
	\leaders\hbox{$\m@th\mkern \@dotsep mu\hbox{.}\mkern \@dotsep mu$}\hfill
	\nobreak
	\hbox to\@pnumwidth{\@tocpagenum{\ifnum#1=1\bfseries\fi#7}}\par
	\nobreak
	\endgroup
	\fi}
\renewcommand\csname r@tocindent0\endcsname{0pt}
\def\l@subsection{\@tocline{2}{0pt}{2.5pc}{5pc}{}}
\DeclareMathOperator{\dist}{dist\,}
\DeclareMathOperator{\co}{co}
\DeclareMathOperator{\aco}{aco}
\DeclareMathOperator{\re}{Re}
\DeclareMathOperator{\id}{Id}
\newcommand{\nn}[1]{{\left\vert\kern-0.25ex\left\vert\kern-0.25ex\left\vert #1 
		\right\vert\kern-0.25ex\right\vert\kern-0.25ex\right\vert}}
\renewcommand{\geq}{\geqslant}
\renewcommand{\leq}{\leqslant}
\newcommand{\NA}{\operatorname{NA}}
\newcommand{\e}{\varepsilon}
\newcommand{\de}{\delta}
\newcommand{\pten}{\ensuremath{\widehat{\otimes}_\pi}}
\newcommand{\eps}{\varepsilon}
\newcommand{\sten}{\ensuremath{\widehat{\otimes}_{\pi_s,N}}}
\newcommand{\psten}{\ensuremath{\widehat{\otimes}_{\pi_s,N}}}
\newcommand{\Loo}{{\bf L}_{o,o}}
\newcommand{\Lpp}{{\bf L}_{p,p}}
\newcommand{\NLoo}{\text{$N$-homogeneous polynomial ${\bf L}_{o,o}$}}
\title[On the SSD of homogeneous polynomials and tensor products]{On the strong subdifferentiability of the homogeneous polynomials and (symmetric) tensor products}
\author[Dantas]{Sheldon Dantas}
\address[Dantas]{Departament de Matem\`{a}tiques and Institut Universitari de Matem\`{a}tiques i Aplicacions de Castell\'o (IMAC), Universitat Jaume I, Campus del Riu Sec. s/n, 12071 Castell\'o, Spain. \newline
	\href{http://orcid.org/0000-0001-8117-3760}{ORCID: \texttt{0000-0001-8117-3760} } }
\email{\texttt{dantas@uji.es}}
\author[Jung]{Mingu Jung}
\address[Jung]{School of Mathematics, Korea Institute for Advanced Study, 02455 Seoul, Republic of Korea \newline
\href{http://orcid.org/0000-0003-2240-2855}{ORCID: \texttt{0000-0003-2240-2855} }}
\email{\texttt{jmingoo@kias.re.kr}}
\author[Mazzitelli]{Martin Mazzitelli}
\address[Mazzitelli]{Instituto Balseiro, CNEA-Universidad Nacional de Cuyo, CONICET, Argentina.}
\email{\texttt{martin.mazzitelli@ib.edu.ar}}
\author[Rodríguez]{Jorge Tomás Rodríguez}
\address[Rodríguez]{Departamento de Matemática and NUCOMPA, Facultad de Cs. Exactas, Universidad Nacional del Centro de la Provincia de Buenos Aires, (7000) Tandil, Argentina and CONICET. \newline
	\href{https://orcid.org/0000-0003-4693-2498}{ORCID: \texttt{0000-0003-4693-2498} } }
	\email{\texttt{jtrodrig@dm.uba.ar}}
\thanks{}
\subjclass[2020]{Primary: 46B20, 46M05, 46G25; Secondary: 46B04, 46B07.}
\date{\today}
\keywords{Tensor Products; Spaces of multilinear functions and polynomials; Strong subdifferentiability; Bishop-Phelps-Bollob\'{a}s property}
\begin{document}

\begin{abstract} In this paper, we study the (uniform) strong subdifferentiability of the norms of the Banach spaces $\mathcal{P}(^N X, Y^*)$,\- $X \pten \cdots \pten X$ and $\sten X$. Among other results, we characterize when the norms of the spaces $\mathcal{P}(^N \ell_p, \ell_{q}), \mathcal{P}(^N l_{M_1}, l_{M_2})$, and $\mathcal{P}(^N d(w,p), l_{M_2})$ are strongly subdifferentiable. Analogous results for multilinear mappings are also obtained. Since strong subdifferentiability of a dual space implies reflexivity, we improve some known results in \cite{DimZal, Gon, GJ} on the reflexivity of spaces of $N$-homogeneous polynomials and $N$-linear mappings.
Concerning the projective (symmetric) tensor norms, we provide positive results on the subsets $U$ and $U_s$ of elementary tensors on the unit spheres of $X \pten \cdots \pten X$ and $\sten X$, respectively. Specifically, we prove that $\sten \ell_2$ and $\ell_2 \pten \cdots \pten \ell_2$ are uniformly strongly subdifferentiable on $U_s$ and $U$, respectively, and that 
${\ensuremath{c_0 \widehat{\otimes}_{\pi_s}}} c_0$ and $c_0\pten c_0$ are strongly subdifferentiable on $U_s$ and $U$, respectively, in the complex case.
\end{abstract}

\maketitle
	\tableofcontents

\section{Introduction}

The main aim of this paper is to study the strong subdifferentiability of the norm of Banach spaces of $N$-homogeneous polynomials $\mathcal{P}(^N X)$ and (its preduals) symmetric tensor products $\sten X$. In order to do so, the following characterization of strong subdifferentiability given in \cite[Theorem 1.2]{FP} (see also \cite{GMZ}) is a very useful tool: the norm $\|\cdot\|$ of a Banach space $X$ is strongly subdifferentiable at a point $x \in S_X$ if and only if for every $\e > 0$, there exists $\delta=\delta(\e, x) > 0$ such that
\begin{equation}\label{characterization FP}
\dist(x^*, D(x)) < \e\quad \text{whenever $x^* \in B_{X^*}$ satisfies $x^*(x) > 1 - \delta$,}
\end{equation}
where $D(x) := \{x^* \in S_{X^*}: x^*(x) = 1\}$. As we will detail in the subsequent sections, this characterization appears in a natural way as \emph{a kind of} Bishop-Phelps-Bollob\'as property for functionals in $X^*$. 
The relation between the denseness of norm attaining mappings and the geometry of the underlying spaces arises in the area from the very beginnings. In his pioneer work \cite{Lin}, Lindenstrauss exhibit some geometrical properties of Banach spaces $X$ and $Y$, that guarantee that the set of norm-attaining linear operators in $\mathcal{L}(X,Y)$ is dense in the whole space. He also showed that the lack of extreme points of the unit ball of the domain space (which is, of course, a geometrical property of the space), plays a fundamental role when trying to obtain examples of spaces for which the set of norm-attaining operators is not dense in the whole space. Since then, this relation between the theory of norm-attaining mappings and the geometry of Banach spaces appears naturally. In particular, it appears in the context of Bishop-Phelps-Bollob\'as type theorems.
Roughly speaking, the Bishop-Phelps-Bollob\'as theorem (the \emph{quantitative} version of Bollob\'as \cite{Bol} of the well-known result of Bishop and Phelps \cite{BP} on the density of norm attaining linear functionals) states that, whenever $(x_0^*, x_0)\in S_{X^*}\times S_{X}$ satisfy $x_0^*(x_0) \approx 1$, there exist $(x_1^*, x_1)\in S_{X^*}\times S_{X}$ such that $x_1^*(x_1)=1$, $x_1^*\approx x_0^*$, and $x_1\approx x_0$. In the context of linear operators, Acosta, Aron, Garc\'ia and Maestre were the first to study, in \cite{AAGM}, a possible generalization of the above mentioned result. As expected, some geometrical properties of the spaces (like the \emph{approximate hyperplane series property} defined and studied in there) appear as sufficient conditions for the validity of a Bishop-Phelps-Bollob\'as theorem for linear operators. For more information on Bishop-Phelps-Bollob\'as type results (in the linear, multilinear and polynomial context) we refer the reader to \cite{A, AAGM, Acoet6, ABGM, ChoKim, CDJ, D, Sain} and the references therein. Recently, some characterizations of uniform convexity, uniform smoothness and strong subdifferentiability of the norm of a Banach space came up to scene in the context of Bishop-Phelps-Bollob\'as type theorems for linear operators and multilinear mappings (see, for instance, \cite{DKL, DKLM, DKLM2, KL}). Having this in mind, we intend to relate the strong subdifferentiability of the norms of $\mathcal{P}(^N X)$ and $\sten X$ with some Bishop-Phelps-Bollob\'as type properties for polynomials (all the proper definitions will be given in Section~\ref{preliminars}). In Theorems~\ref{theoremA} and \ref{multilineartheoremA} we show that, under certain hypotheses on the underlying spaces, the strong subdifferentiability of the space of $N$-homogeneous polynomials $\mathcal{P}(^NX)$ is equivalent to a \emph{polynomial} version of \eqref{characterization FP}. We use these results to study the strong subdifferentiability of the norms of spaces of polynomials and multilinear forms over some classical sequence spaces, such as $\ell_p$, Orlicz spaces and Lorentz spaces. In Section~\ref{Section:TheoremC} we study the \emph{dual} version of this property, and obtain some results on the strong subdifferentiability of elementary tensors on projective symmetric tensor products. As the involved tools and reasonings also apply to the multilinear context, we obtain positive results on the strong subdifferentiability of elementary tensors on the projective tensor product of classical spaces such as $c_0$ and $\ell_2$.

\section{Preliminary material and main results} \label{preliminars} 

In this section, we give the basic concepts we will be using throughout the paper and state our main results. In first place, we set the notation and recall some known properties that will appear in what follows. All Banach spaces considered here are over the real or complex field unless explicitly stated otherwise. We denote, respectively, $B_X$, $S_X$ and $X^*$ the closed unit ball, the unit sphere and the topological dual of a Banach space $X$. We denote by $\mathcal{P}(^N X, Y)$ the Banach space of all $N$-homogeneous polynomials from $X$ into $Y$ endowed with the supremum norm, while $P_{wsc}(^N X, Y)$ stands for the Banach space of all \emph{weakly sequentially continuous} $N$-homogeneous polynomials from $X$ into $Y$. The Banach space of all $N$-linear mappings from $X_1\times \cdots \times X_N$ into $Y$, endowed with the supremum norm, will be denoted by $\mathcal{L}(X_1\times \cdots \times X_N, Y)$, while the space of $N$-linear symmetric mappings from $X\times \cdots\times X$ to $Y$ will be denoted by $\mathcal{L}_s(^NX, Y)$. When $Y=\mathbb{K}$ is the scalar field, we will omit it and write $\mathcal{P}(^NX)$, $\mathcal{P}_{wsc}(^NX)$, $\mathcal{L}(X_1\times \cdots \times X_N)$ and $\mathcal{L}_s(^NX)$. Given $P \in \mathcal{P}(^N X, Y)$ we consider $P^t: Y^* \rightarrow \mathcal{P}(^N X, \K)$, the \emph{transpose of $P$}, given by $(P^t y^*)(x) := y^*P(x)$ for every $x \in X$ and every $y^* \in Y^*$. For $P \in \mathcal{P}(^N X, Y)$, we set $\NA(P) := \{ x \in S_X: \|P(x)\| = \|P\|\}$.
We recall the following well-known properties of Banach spaces. A Banach space $X$ is \emph{strictly convex} (SC, for short) if 
$$
\left\| \frac{x+y}{2}\right\| < 1 \quad \text{whenever $x, y \in S_X$, $x\neq y$}.
$$
The \emph{modulus of convexity} of a Banach space $X$ is defined for each $\e \in (0,2]$ by
\begin{equation*}
\delta_X(\e) :=\inf\left \{ 1-\left\|\frac{x+y}{2}\right\| ~:~ x,y\in B_X, \|x-y\|\geq \e \right\}
\end{equation*}
and $X$ is said to be {\it uniformly convex} (UC, for short) if $\delta_X(\e)>0$ for $\e \in (0,2]$. The \emph{modulus of smoothness} of a Banach space $X$ is defined for each $\tau>0$ by
\begin{equation*}
\rho_X(\tau) :=\sup\left \{\frac{\|x+y\|+\|x-y\|}{2}-1 ~:~ x,y\in X, \|x\|=1, \|y\|=\tau \right\}
\end{equation*}
and $X$ is said to be {\it uniformly smooth} (US, for short) if $\lim_{\tau\to 0} \rho_X(\tau)/\tau=0$. It is a well-known result of $\hat{\text{S}}$mulyan that $X$ is uniformly convex if and only if $X^*$ is uniformly smooth and both properties imply reflexivity. 
We say that a Banach space $X$ has the \emph{Kadec-Klee property} if weak and norm topology coincide on $S_X$. Analogously, $X^*$ has the \emph{$w^*$-Kadec-Klee property} if the weak$^*$ and norm topologies coincide on $S_{X^*}$. We will say that $X$ has the \emph{sequential Kadec-Klee property} is $\|x_n-x_0\|\to 0$ whenever $\|x_n\|\to \|x_0\|$ and $x_n\xrightarrow[]{w}\, x_0$ (analogously, we define the \emph{sequential $w^*$-Kadec-Klee property}). It is worth mentioning that the reader could find a little bit \emph{confusing} the above definition since, in the literature many authors refer to the sequential Kadec-Klee property simply as the Kadec-Klee property.

A Banach space $X$ is said to have the \emph{approximation property} (for short, AP) (respectively, \emph{compact approximation property} (for short, CAP)) if for every compact subset $C$ of $X$ and every $\e>0$, there is a finite rank (respectively, compact) operator $T\colon X\to X$ such that \linebreak $\|Tx - x\|\leq \e$ for every $x \in C$. It is immediate that the AP implies the CAP. However, it is known that there exists a Banach space which has the CAP but does not have the AP \cite{W}.

In the next subsections we focus on some properties and tools that are relevant in what follows.

\subsection{Strong subdifferentiability}\label{section SSD} The following notions (specifically, Definitions \ref{definition:SSD} and \ref{definition:uniform-SSD} below) are the central ones in the present paper. 

\begin{definition} \label{definition:SSD} We say that the norm $\|\cdot\|$ of a Banach space $X$ is \emph{strongly subdifferentiable} (SSD, for short) at $x \in S_X$ when the one-sided limit 
\begin{equation} \label{eq:SSD0}
  \lim_{t \rightarrow 0^+} \frac{\|x + th\| - 1}{t}
\end{equation}
exists uniformly in $h \in B_X$. When it holds for every $x$ in a subset $U\subseteq S_X$ we say that $X$ is SSD on $U$, and when it holds for every $x\in S_X$ we simply say that $X$ is SSD.
\end{definition} 

It is well-known that, for an arbitrary $x \in S_X$, the above limit exists in every direction $h$ and that 
\begin{equation} \label{eq:SSD}
  \tau(x, h) := \lim_{t \rightarrow 0^+} \frac{\|x + th\| - 1}{t} = \max \{ \re x^*(h): x^* \in D(x) \} \ \ \ \ (h \in X),
\end{equation}
where $D(x)$ is the set of all normalized support functionals for $B_X$ at $x$, that is,
\begin{equation*}
  D(x) := \{ x^* \in S_{X^*}: x^*(x) = 1 \}.
\end{equation*}
This means that the norm of $X$ is SSD at $x \in S_X$ if and only if 
\begin{equation} \label{eq:SSD1}
\lim_{t \rightarrow 0^+} \sup \left\{ \frac{\|x + th\| - 1}{t} - \tau(x, h): h \in B_X \right\} = 0.  
\end{equation}
As we already mentioned in the Introduction, Franchetti and Pay\'a \cite{FP} proved the following characterization result. 
\begin{theorem}[\mbox{\cite[Theorem 1.2]{FP}}]\label{thm:FP}
Let $X$ be a Banach space. The norm of $X$ is SSD at $x\in S_X$ if
and only if given $\eps >0$, there exists $\delta(\eps,x)>0$ such that whenever $x^* (x) > 1-\delta(\eps,x)$ for some $x^* \in B_{X^*}$, the distance $d(x^*, D(X)) < \eps$.
\end{theorem}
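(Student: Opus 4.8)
The plan is to reduce both implications to a single duality formula for the distance to $D(x)$ and then to exploit the monotonicity of the difference quotients. Throughout, write $\phi_t(x,h):=\frac{\|x+th\|-1}{t}$ for $t>0$ and $h\in B_X$. Since $t\mapsto \|x+th\|$ is convex and equals $1$ at $t=0$, the quotient $\phi_t(x,h)$ is nondecreasing in $t$, so by \eqref{eq:SSD} its infimum over $t>0$ is precisely $\tau(x,h)$; in particular $\phi_t(x,h)-\tau(x,h)\geq 0$, and by \eqref{eq:SSD1} the norm is SSD at $x$ exactly when $\psi(t):=\sup_{h\in B_X}\bigl(\phi_t(x,h)-\tau(x,h)\bigr)\to 0$ as $t\to 0^+$.

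The key step, and the one I expect to be the main obstacle, is the identity
\begin{equation*}
\dist(z^*,D(x))=\sup_{h\in B_X}\bigl(\re z^*(h)-\tau(x,h)\bigr)\qquad (z^*\in B_{X^*}).
\end{equation*}
Writing $\dist(z^*,D(x))=\inf_{y^*\in D(x)}\sup_{h\in B_X}\re(z^*-y^*)(h)$ and recalling $\tau(x,h)=\max_{y^*\in D(x)}\re y^*(h)$ from \eqref{eq:SSD}, the content is the interchange of $\inf_{y^*}$ and $\sup_h$. I would justify this with Sion's minimax theorem: $D(x)$ is nonempty (Hahn--Banach), convex and $w^*$-compact, the function $(y^*,h)\mapsto \re(z^*-y^*)(h)$ is affine in each variable, and it is $w^*$-continuous in $y^*$ for fixed $h$; these are exactly the hypotheses needed. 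The only delicate points are checking $w^*$-compactness of $D(x)$ (it is the intersection of $B_{X^*}$ with the $w^*$-closed set $\{x^*(x)=1\}$) and handling the real part correctly in the complex case, where $\|z^*-y^*\|=\sup_{h\in B_X}\re(z^*-y^*)(h)$ after a rotation.

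For the implication ``SSD $\Rightarrow$ \eqref{characterization FP}'' I would argue by contradiction: if it fails there are $\eps_0>0$ and $z_n^*\in B_{X^*}$ with $\re z_n^*(x)>1-\tfrac1n$ but $\dist(z_n^*,D(x))\geq \eps_0$. By the identity there is $h_n\in B_X$ with $\re z_n^*(h_n)-\tau(x,h_n)>\tfrac{\eps_0}{2}$, and then for every fixed $t>0$,
\begin{equation*}
\phi_t(x,h_n)\geq \frac{\re z_n^*(x)-1}{t}+\re z_n^*(h_n)>-\frac{1}{nt}+\tau(x,h_n)+\frac{\eps_0}{2},
\end{equation*}
so $\psi(t)>\tfrac{\eps_0}{2}-\tfrac{1}{nt}$ for all $n$, whence $\psi(t)\geq \tfrac{\eps_0}{2}$ for every $t>0$, contradicting SSD at $x$.

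For the converse ``\eqref{characterization FP} $\Rightarrow$ SSD'', which does not even need the minimax identity, fix $\eps>0$ and take the corresponding $\delta>0$. Given $t<\delta/2$ and $h\in B_X$, choose by Hahn--Banach a norming functional $z^*\in S_{X^*}$ with $\re z^*(x+th)=\|x+th\|$. From $\|x+th\|\geq 1-t$ one gets $\re z^*(x)\geq 1-2t>1-\delta$, so there is $y^*\in D(x)$ with $\|z^*-y^*\|<\eps$. Since $\re z^*(x)\leq 1$ we have $\phi_t(x,h)\leq \re z^*(h)$, while $\tau(x,h)\geq \re y^*(h)$; therefore $\phi_t(x,h)-\tau(x,h)\leq \re(z^*-y^*)(h)<\eps$. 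Taking the supremum over $h$ gives $\psi(t)\leq \eps$ for all $t<\delta/2$, i.e.\ SSD at $x$. I expect the routine work to be only the book-keeping of constants and the real/complex distinctions; the genuine mathematical content sits entirely in the minimax identity above.
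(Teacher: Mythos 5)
Your proof is correct, but there is nothing in the paper to compare it against: Theorem~\ref{thm:FP} is quoted verbatim from Franchetti--Pay\'a \cite[Theorem~1.2]{FP} and the paper gives no proof (it only uses the statement, e.g.\ via \eqref{characterization FP}). Judged on its own merits, your argument is complete: the monotonicity of the difference quotients, the reduction of SSD to $\psi(t)\to 0$, the contradiction argument for the forward implication, and the converse (norming functional for $x+th$, then $\phi_t(x,h)\le \re z^*(h)$ and $\tau(x,h)\ge \re y^*(h)$, so $\phi_t(x,h)-\tau(x,h)\le \|z^*-y^*\|<\eps$) are all sound, and your converse is essentially the standard one. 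The only point worth flagging is the key identity
\begin{equation*}
\dist(z^*,D(x))=\sup_{h\in B_X}\bigl(\re z^*(h)-\tau(x,h)\bigr),
\end{equation*}
which you establish via Sion's minimax theorem; the hypotheses you verify ($D(x)$ nonempty, convex, $w^*$-compact by Banach--Alaoglu, integrand affine in each variable and $w^*$-continuous in $y^*$) are exactly what Sion needs, so this is legitimate. It is, however, heavier machinery than required: weak duality gives $\le$ directly, and for $\ge$ one can argue that if $\dist(z^*,D(x))>\alpha$ then $z^*$ lies outside the convex $w^*$-compact set $D(x)+\alpha B_{X^*}$, so Hahn--Banach separation in $(X^*,w^*)$ (whose dual is $X$) yields, after normalizing, some $h\in S_X$ with $\re z^*(h)>\tau(x,h)+\alpha\|h\|=\tau(x,h)+\alpha$, using $\tau(x,h)=\max\{\re y^*(h):y^*\in D(x)\}$ from \eqref{eq:SSD}. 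This one-step separation is essentially the route of the original source, and it keeps the forward implication elementary; your minimax formulation buys a cleaner symmetric statement of the same fact at the cost of invoking a named theorem. Either way, the proposal stands as a correct, self-contained proof of a result the paper imports without proof.
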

In other words, the norm of $X$ is SSD at $x\in S_X$ if and only if $x$ \emph{strongly exposes} the set $D(x)$ (that is, the distance $\dist(x_n^*, D(x))$ tends to zero for any sequence $(x_n^*)_{n=1}^{\infty} \subseteq B_{X^*}$ with $\re x_n^*(x) \rightarrow 1$ as $n \rightarrow \infty$).
It is worth mentioning that this is the analogue of the characterization for Fr\'echet differentiability proved by \v{S}mulyan in \cite{Smu}.

For a background on the study of strong subdifferentiability of the norm, we send the reader to \cite{AOPR, Contreras, CP, F, GGS, Godefroy, GMZ, Gregory}. For a systematic study on the topic, we suggest \cite{FP, DGZ}. Here, we shall only mention some examples of classical Banach spaces with SSD norm: it is known that every finite-dimensional Banach space is SSD. Since a Banach space $X$ is uniformly smooth if and only if its norm is uniformly Fr\'echet differentiable on $S_X$, it is clear that every uniformly smooth Banach space is SSD; for instance, the sequence spaces $\ell_p$ with $1<p<\infty$ are SSD. It is worth mentioning that if a dual space $X^*$ is SSD, then $X$ is reflexive (see \cite[Theorem~ 3.3]{FP}); hence $\ell_1$ and $\ell_\infty$ are not SSD. The sequence spaces $c_0$ and $d_*(w,1)$ (predual of Lorentz sequence space) are examples of non-reflexive SSD Banach spaces. There are other examples of non-reflexive spaces with a SSD norm, for instance, the predual of the Hardy space $H^1$ and the predual of the Lorentz space $L_{p,1}(\mu)$. Moreover, if $X$ is a predual of a Banach space with the $w^*$-Kadec-Klee property, then $X$ is SSD (see \cite[Proposition~2.6]{DKLM}). On the other hand, it is known that if $X$ is SSD, then $X$ is an Asplund space (see \cite{FP, GMZ}).

We now deal with a uniform and, at the same time, localized version of strong subdifferentiability. It was proved in \cite[Proposition~4.1]{FP} that a Banach space $X$ is uniformly smooth if and only if the limit in \eqref{eq:SSD0} is also uniform in $x\in S_X$ (we already mentioned in the above paragraph, that if $X$ is uniformly smooth, then $X$ is SSD). Since uniform smoothness is a quite restrictive property, we could ask for Banach spaces for which the limit in \eqref{eq:SSD0} is uniform in $x\in U$, for some subset $U\subset S_X$. 
\begin{definition} \label{definition:uniform-SSD} Given a set $U \subseteq S_X$, we say that the norm of a Banach space $X$ is \emph{uniformly strongly subdifferentiable on $U$} (USSD on $U$, for short) if the limit (\ref{eq:SSD0}) is uniform for $h \in B_X$ and $x \in U$. In other words, the norm of $X$ is USSD on $U$ if and only if 
\begin{equation} \label{eq:SSD3}
\lim_{t \rightarrow 0^+} \sup \left\{ \frac{\|x+th\| - 1}{t} - \tau(x, h): h \in B_X, x \in U \right\} = 0.
\end{equation}
\end{definition}
A relation between numerical range and uniform strong subdifferentiability was established in \cite{Rod}. Also,
the uniform strong subdifferentiability of the norm of JB$^*$-triples was studied in \cite{BecRod}, where it is proved that if $X$ is a JB$^*$-triple, then $X$ is USSD on the set of nonzero tripotents of $X$. However, we could not find a systematic study of this property in the literature and, hence, we are not able to give more examples of Banach spaces $X$ and subsets $U\subset S_X$ on which $X$ is USSD.

\subsection{(Symmetric) tensor products} On the one hand, the \emph{projective tensor product} between the Banach spaces $X_1, \dots, X_N$, denoted by $X_1 \pten \cdots \pten X_N$, is defined as the completion of the algebraic tensor product $X_1 \otimes \cdots \otimes X_N$ endowed with the norm 
\begin{equation*}
\|z\|_{\pi} := \inf \left\{ \sum_{i=1}^n \|x^1_i\|\cdots \|x^N_i\|: z = \sum_{i=1}^{n} x^1_i \otimes\cdots \otimes x^N_i \right\},
\end{equation*}
where the infimum is taken over all representations of $z$ of the form $\sum_{i=1}^n x^1_i \otimes\cdots \otimes x^N_i$. It is well-known that the tensor product between $X_1, \dots, X_N$ \emph{linearizes} $N$-linear mappings on $X_1\times\cdots \times X_N$. Indeed, we have the isometric isomorphism
$$
(X_1 \pten \cdots \pten X_N)^*=\mathcal{L}(X_1\times \cdots\times X_N),
$$
where the duality is given by 
$$
L_A(z)=\langle z, A\rangle=\sum_{i=1}^\infty A(x^1_i, \dots, x^N_i),
$$
for $A\in \mathcal{L}(X_1\times \cdots\times X_N)$ and $z= \sum_{i=1}^\infty x^1_i \otimes\cdots \otimes x^N_i \in X_1 \pten \cdots \pten X_N$. Moreover, for multilinear mappings with values in a dual space $Y^*$ we have the isometric isomorphism
$$
\left((X_1 \pten \cdots \pten X_N)\pten Y\right)^*=\mathcal{L}(X_1\times \cdots\times X_N, Y^*),
$$
with the duality given by
$$
L_A(z)=\langle z, A\rangle=\sum_{j=1}^\infty\sum_{i=1}^\infty A(x^1_{j,i}, \dots, x^N_{j,i})(y_j),
$$
for $A\in \mathcal{L}(X_1\times \cdots\times X_N, Y^*)$ and $z= \sum_{j=1}^\infty v_j\otimes y_j$, where $(y_j)_j \subset Y$ and $(v_j)_j \subset X_1\pten\cdots\pten X_N$ with $v_j=\sum_{i=1}^\infty x^1_{j,i}\otimes \cdots\otimes x^N_{j,i}$.
It is well-known that the closed unit ball of $X_1\pten X_2$ is the closed convex hull of $B_{X_1}\otimes B_{X_2}$, that is, $B_{X_1 \pten X_2} = \overline{\co}(B_{X_1} \otimes B_{X_2})$.

On the other hand, the \emph{symmetric projective tensor product} of $X$, denoted by $\sten X$, is the completion of the linear space $\otimes_{s,N} X$ generated by $\{ \otimes^N x: x \in X\}$ (here, $\otimes^N x$ stands for the elementary tensor $x \otimes \stackrel{N}{\cdots} \otimes x$) endowed with the norm 
\begin{equation*}
\|z\|_{\pi_s, N} := \inf \left\{ \sum_{i=1}^n |\lambda_i|\|x_i\|^N: z = \sum_{i=1}^n \lambda_i \otimes^N x_i \right\}, 
\end{equation*}
where the infimum is taken over all the possible representations of $z$ of that form. In the same way that projective tensor product linearizes multilinear mappings, the symmetric projective tensor product linearizes homogeneous polynomials. The identity
$$
(\sten X)^* = \mathcal{P}(^N X)
$$ 
holds isometrically, and the duality is given by
\begin{equation*}
L_P(z) =\langle z, P \rangle=\sum_{i=1}^\infty \lambda_i P(x_i)
\end{equation*}
for $P\in \mathcal{P}(^NX)$ and $z=\sum_{i=1}^\infty \lambda_i \otimes^N x_i \in \sten X$. More in general, 
$$
((\sten X) \pten Y)^* = \mathcal{P}(^N X, Y^*)
$$
holds isometrically, with the duality given by
$$
L_P(z) =\langle z, P \rangle=\sum_{j=1}^\infty\sum_{i=1}^\infty \lambda_{j,i}P(x_{j,i})(y_j)
$$
for $P\in \mathcal{P}(^N X, Y^*)$ and $z=\sum_{j=1}^\infty v_j\otimes y_j$ for $(y_j)_j \subset Y$ and $(v_j)_j\subset \sten X$ with \linebreak $v_j=\sum_{i=1}^\infty \lambda_{j,i}\otimes^N x_{j,i}$.
We also have that $B_{\sten X} = \overline{\aco}(\{ \otimes^N x: x \in S_X \})$ where $\aco(C)$ stands for the absolute convex hull of the set $C$. 

We refer the reader to the first chapters of the books \cite{defant1992tensor, ryan2002introduction} for an introduction on tensor products (see also \cite{diestel2008metric}), and to Floret's survey article \cite{floret1997natural} for symmetric tensor products.

\subsection{Orlicz and Lorentz sequence spaces}\label{Orlicz Lorentz}
We briefly recall the definitions and some properties of Orlicz and Lorentz sequence spaces. These spaces, as well as $\ell_p$ spaces, will come up as examples of applications of our main results. An \emph{Orlicz function} $M$ is a continuous non-decreasing and convex function defined for $t\geq 0$ such that $M(0)=0$, $M(t)>0$ for every $t>0$ and $\lim_{t\to\infty}M(t)=\infty$. The \emph{Orlicz sequence space $l_M$} associated to an Orlicz function $M$ is the space of all sequences of scalars $x=(a_i)_i$ such that $\sum_{i}M(|a_i|/\rho)<\infty$ for some $\rho>0$. The space $l_M$ equipped with the Luxemburg norm
$$
\|x\|=\inf\left\{ \rho>0:\,\, \sum_{i=1}^\infty M(|a_i|/\rho)\leq 1\right\}
$$
is a Banach space, and we are dealing with this norm unless stated the contrary. An Orlicz function $M$ is said to satisfy the \emph{$\Delta_2$-condition at zero} if
$$
\limsup_{t\to 0} \frac{M(2t)}{M(t)}<\infty.
$$
The canonical vectors $\{e_n\}_n$ form a symmetric basic sequence in $l_M$ and a symmetric basis of the subspace $h_M\subset l_M$ consisting of those sequences $x=(a_i)_i\in l_M$ such that\linebreak $\sum_{i} M(|a_i|/\rho)<\infty$ for every $\rho>0$. The equality $l_M=h_M$ holds if and only if $M$ satisfies the $\Delta_2$-condition at zero. In this case it is clear that $l_M$ has the CAP, since it has a Schauder basis. Let
\begin{equation}\label{Boyd index alpha}
\alpha_M=\sup\left\{p>0:\,\, \sup_{0<t,\lambda\leq 1} \frac{M(\lambda t)}{M(\lambda)t^p}<\infty \right\}
\end{equation}
and
\begin{equation}\label{Boyd index beta}
\beta_M=\sup\left\{q>0:\,\, \inf_{0<t,\lambda\leq 1} \frac{M(\lambda t)}{M(\lambda)t^q}>0 \right\}.
\end{equation}
It is known that $1\leq \alpha_M \leq \beta_M\leq \infty$, and $\beta_M<\infty$ if and only if $M$ satisfies the $\Delta_2$-condition at zero. Moreover, the space $l_M$ is reflexive if and only if $\beta_M<\infty$ and $\alpha_M>1$ or, equivalently, $M$ and its dual function $M^*(u)=\max\{tu - M(t):\,\, 0<t<\infty\}$ satisfy the $\Delta_2$-condition at zero. It is also known that $l_M$ has the (uniform) Kadec-Klee property if and only if $M$ satisfies the \linebreak $\Delta_2$-condition at zero.
A detailed study of these and other properties of Orlicz sequence spaces can be found, for instance, in \cite{LT1}. 
Another properties in which we are particularly interested are uniform convexity and uniform smoothness. In \cite[Theorem~2.38]{Che} it is shown that the space $l_M$ endowed with the Orlicz norm
$$
\|x\|^0=\sup\left\{\sum_{i=1}^\infty a_i b_i:\,\, \sum_{i=1}^\infty M(|b_i|)\leq 1\right\}
$$
is uniformly convex if and only if $M$ satisfies the $\Delta_2$-condition at zero and $M$ is uniformly convex on $[0, \pi_M(1)]$, where $\pi_M(\alpha)=\inf\{t>0:\,\, M^*(p(t))\geq \alpha\}$ (here, $p$ is the right derivative of $M$),
i.e., given $\e>0$ there exists $\delta>0$ such that
$$
M\left(\frac{t+s}{2}\right)\leq (1-\delta)\frac{M(t)+M(s)}{2}
$$
for all $s,t\in [0, \pi_M(1)]$ satisfying $|s-t|\geq \e \max\{s,t\}$. Since $(l_M, \|\cdot\|)=(l_{M^*}, \|\cdot\|^0)^*$ isometrically, we obtain necessary and sufficient conditions for the uniform smoothness of $l_M$ (with the Luxemburg norm).

We focus now our attention on Lorentz sequence spaces. Let $1\leq p<\infty$ and $v=(v_i)_i$ be a non-increasing sequence of positive numbers such that $v_1=1$, $\lim_i v_i=0$ and $\sum_i v_i=\infty$. The \emph{Lorentz sequence space $d(v,p)$} is the Banach space of all sequences $x=(a_i)_i$ such that 
$$
\|x\|=\sup_{\pi} \left( \sum_{i=1}^\infty v_i \,|a_{\pi(i)}|^p\right)^\frac{1}{p} <\infty,
$$
where the supremum is taken over all permutations $\pi$ of the set of positive integers. It is well-known that $d(v,p)$ is reflexive if and only if $1<p<\infty$.
The canonical vectors $\{e_n\}_n$ form a symmetric basic sequence in $d(v,p)$ and, consequently, has the CAP.
In \cite[Theorem~2]{CasLin} it is proved that $d(v,p)$ has the sequential Kadec-Klee property if $1<p<\infty$. Moreover, in \cite{Alt} it is shown that $d(v,p)$ ($1<p<\infty$) is uniformly convex if and only if 
$$
\inf_n \frac{\sum_{i=1}^{2n} v_i}{\sum_{i=1}^{n} v_i}=k>1.
$$
For basic properties of Lorentz sequence spaces we refer the reader to \cite{LT1}.

As we will see below, Theorem~\ref{theoremA} relates the strong subdifferentiability of the space of homogeneous polynomials with the study of weakly sequentially continuous polynomials. In that sense, the lower and upper indexes of a Banach space $X$ defined by Gonzalo and Jaramillo in \cite{GJ} will appear naturally in our context, since they are closely related to the study of weakly sequentially continuous polynomials. We are particularly interested in the values of these indexes for Orlicz and Lorentz sequence spaces, computed by Gonzalo in \cite{Gon}. In first place, we recall the definition of lower and upper indexes of a Banach space $X$. A sequence $(x_n)_n$ in $X$ is said to have an \emph{upper $p$-estimate} ($1\leq p\leq \infty$) if there exist a constant $C$ such that
$$
\left\| \sum_{n=1}^n a_n x_n\right\| \leq C \left(\sum_{n=1}^n |a_n|^p \right)^{\frac{1}{p}}
$$
for every $n$-tuple of scalars $a_1, \dots, a_n$. A Banach space $X$ has \emph{property $S_p$} if every weakly null semi-normalized basic sequence in $X$ has a subsequence with an upper $p$-estimate. The \emph{lower index of $X$} is defined as
$$
l(X)=\sup\{p\geq 1:\,\, \text{$X$ has property $S_p$}\}.
$$
Analogously, using lower $q$-estimates (instead of upper $p$-estimates) it can be defined the \emph{property $T_q$} and the \emph{upper index of $X$} as
$$
u(X)=\inf\{q\geq 1:\,\, \text{$X$ has property $T_q$}\}.
$$
It is not difficult to see that $l(\ell_p)=u(\ell_p)=p$ for $1<p<\infty$. As we already mentioned, in \cite{Gon} the author computes the values of lower and upper indexes for Orlicz and Lorentz sequence spaces. In the case of Orlicz spaces, it is shown that $l(h_M)=\alpha_M$ and $u(h_M)=\beta_M$, where $\alpha_M, \beta_M$ are the lower and upper Boyd indexes defined in \eqref{Boyd index alpha} and \eqref{Boyd index beta}. Since we are interested in reflexive Orlicz spaces and, in that case, the equality $l_M=h_M$ holds, we will use that $l(l_M)=\alpha_M$ and $u(l_M)=\beta_M$. For Lorentz sequence spaces it is not known (up to our knowledge) the exact values of both lower and upper indexes. On the one hand, it is known that $l(d(v,p))=p$ for $1<p<\infty$. On the other hand $u(d(v,p))\geq r^*(v)p$, where 
$$
r(v)=\inf\{s\in [1,\infty]:\,\, v\in \ell_s\} \quad \text{and}\quad \frac{1}{r(v)}+\frac{1}{r^*(v)}=1.
$$

\subsection{Motivation and tools} Recall that the Bishop-Phelps-Bollob\'as theorem states that given $\e > 0$, there exists $\eta(\e) > 0$ such that whenever $(x_0^*, x_0)\in S_{X^*}\times S_{X}$ satisfy $|x_0^*(x_0)| > 1 - \eta(\e)$, there exist $(x_1^*, x_1)\in S_{X^*}\times S_{X}$ such that 
$$
|x_1^*(x_1)|=1, \quad \|x_1^*-x_0^*\|<\e \quad \text{and}\quad \|x_1-x_0\|<\e.
$$
Note that the characterization of strong subdifferentiability stated in Theroem \ref{thm:FP} is \emph{a kind of} Bishop-Phelps-Bollob\'as property on which the point $x_0$ is fixed and the $\eta$ in the definition depends not only on $\e>0$ but also on the fixed point $x_0$ (in that sense, these property is referred as the \emph{local Bishop-Phelps-Bollob\'as point property} since we \emph{fix} a point and the $\eta$ is \emph{localized}). 
As it is natural in the study of Bishop-Phelps-Bollob\'as type properties, the above property and its \emph{dual} counterpart (where, instead of a point, a linear functional is fixed and the function $\eta$ depends on $\e$ and the fixed functional) were defined and studied in the context of linear and multilinear operators. It is worth mentioning that these properties are related to the strong subdifferentiability of the domain spaces and, in the multilinear context, of the projective tensor product of the domain spaces. For the sake of clarity, let us define the local Bishop-Phelps-Bollob\'as properties. We define these properties in the more general context of $N$-linear mappings, which covers the \emph{linear case} (putting $N=1$) and also the \emph{functional case} (putting $N=1$ and the scalar field as the range space). As we are going to deal with the strong subdifferentiability of spaces of polynomials and symmetric projective tensor products, we also define the polynomial versions of such properties. We follow the notation in \cite{DKLM, DKLM2, DR}.
\begin{definition}\label{def Lpp and Loo}
Let $N\in \mathbb{N}$ and $X, X_1, \dots, X_N, Y$ be Banach spaces.
\begin{enumerate}
  \item[(i)] The \emph{local Bishop-Phelps-Bollob\'as point property} ($\Lpp$, for short).
  
\medskip  

\noindent The pair $(X_1 \times \cdots \times X_N, Y)$ has the $\Lpp$ if given $\e > 0$ and $(x_1,\dots, x_N) \in S_{X_1} \times \cdots \times S_{X_N}$, there exists $\eta(\e, x_1, \ldots, x_N) >0$ such that whenever $A \in \mathcal{L}(X_1 \times \cdots \times X_N, Y)$ with $\|A\| = 1$ satisfies $$\|A(x_1, \ldots, x_N)\| > 1 - \eta(\e, x_1, \ldots, x_N),$$ there exists $B \in \mathcal{L}(X_1 \times \cdots \times X_N, Y)$ with $\|B\| = 1$ such that 
  $$
  \|B(x_1, \ldots, x_N)\| = 1 \quad \text{and}\quad \|B - A\| <\e.
  $$
  
\medskip
  
\noindent The pair $(X, Y)$ has the \emph{$N$-homogeneous polynomial $\Lpp$} if given $\e > 0$ and $x\in S_X$, there exists $\eta(\e, x) > 0$ such that whenever $P\in S_{\mathcal{P}(^NX, Y)}$ satisfy $\|P(x)\|>1-\eta(\e, x)$, there exists $Q\in \mathcal{P}(^NX, Y)$ such that $\|Q(x)\|=1$ and $\|P-Q\|<\e$. 

\medskip

\item[(ii)] The \emph{local Bishop-Phelps-Bollob\'as operator property} ($\Loo$, for short).
  
\medskip 

\noindent The pair $(X_1 \times\cdots\times X_N, Y)$ has the $\Loo$ if given $\e > 0$ and $A \in \mathcal{L}(X_1\times\cdots \times X_N, Y)$ with $\|A\| = 1$, then there exists $\eta(\e, A) > 0$ such that whenever $(x_1,\ldots,x_N) \in S_{X_1}\times\cdots \times S_{X_N}$ satisfies 
$$
\|A(x_1,\ldots,x_N)\| > 1 - \eta(\e, A),
$$
there exists $(x_1^0,\ldots,x_N^0) \in S_{X_1}\times\cdots \times S_{X_N}$ such that
$$
\|A(x_1^0,\ldots,x_N^0)\| = 1 \quad \text{and} \quad \|x^0_i - x_i\| < \e
$$
for every $i=1, \ldots, N$. 

\medskip

\noindent The pair $(X, Y)$ has the \emph{$N$-homogeneous polynomial $\Loo$} if given $\e > 0$ and \linebreak $P \in \mathcal{P}(^NX, Y)$ with $\|P\| = 1$, there exists $\eta(\e, P) > 0$ such that whenever $x \in S_X$ satisfies $\|P(x)\| > 1 - \eta(\e, P)$, there exists $x_0 \in S_X$ such that $\|P(x_0)\| = 1$ and $\|x_0 - x\| < \e$.
\end{enumerate}
\end{definition}

Let us briefly explain the connection between these Bishop-Phelps-Bollob\'as type properties and the geometry of the underlying Banach spaces. In first place, as an easy consequence of the characterizations of strong subdifferentiability given in \cite[Theorem~1.2]{FP} we have that:
\begin{itemize}
\itemsep0.3em
  \item $X$ is SSD if and only if the pair $(X, \mathbb{K})$ has the $\Lpp$;
  \item $X^*$ is SSD if and only if the pair $(X, \mathbb{K})$ has the $\Loo$.
\end{itemize}
When dealing with vector-valued linear and multilinear operators, we have only one of the implications in the above equivalences.
\begin{itemize}
\itemsep0.3em
  \item If the pair $(X_1\times \cdots\times X_N, Y)$ has the $\Lpp$, then $X_i$ is SSD for every $i=1,\dots, N$ (see \linebreak \cite[Proposition~2.3]{DKLM2}). The reciprocal does not hold (see \cite[Remark~3.3]{DKLM}).
  \item If the pair $(X_1\times \cdots\times X_N, Y)$ has the $\Loo$, then $X_i^*$ is SSD for every $i=1,\dots, N$\linebreak (see \cite[Proposition~2.3]{DKLM2}). The reciprocal does not hold (see \cite[Theorem~2.1]{D}).
\end{itemize}
At this point, we are ready to point out our major motivation in the study of strong subdifferentiability of the spaces of $N$-homogeneous polynomials and symmetric tensor products. Since the projective tensor product of two Banach spaces $X_1$ and $X_2$ \emph{linearizes} the space of bilinear forms on $X_1\times X_2$, the following questions come up naturally: 
\begin{itemize}
\itemsep0.3em
\item[Q1)] Does the pair $(X_1\times X_2, \mathbb{K})$ has the $\Lpp$ if and only if the pair $(X_1\pten X_2, \mathbb{K})$ has the $\Lpp$? Note that this last statement is equivalent to say that $X_1\pten X_2$ is SSD.
\item[Q2)] Does the pair $(X_1\times X_2, \mathbb{K})$ has the $\Loo$ if and only if the pair $(X_1\pten X_2, \mathbb{K})$ has the $\Loo$? Note that this last statement is equivalent to say that $(X_1\pten X_2)^*$ is SSD.
\end{itemize}
These questions were addressed in \cite{DKLM2, DR} and (among others) the following results were obtained.
\begin{theorem}\label{thm motivation}
Let $X_1, X_2$ be Banach spaces.
\begin{enumerate}
\itemsep0.3em 
  \item[\rm (i)] If $X_1\pten X_2$ is SSD then the pair $(X_1\times X_2, \mathbb{K})$ has the $\Lpp$. The reciprocal does not hold taking, for instance, $X_1=X_2=\ell_2$.
  \item[\rm (ii)] Suppose that $X_1$ has the AP. If $X_1$ is strictly convex or has the sequential Kadec-Klee property, then $(X_1\times X_2, \mathbb{K})$ has the $\Loo$ if and only if the pair $(X_1\pten X_2, \mathbb{K})$ has the $\Loo$.
\end{enumerate}
\end{theorem}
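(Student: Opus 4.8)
The plan is to push everything through two dictionaries: the isometric identity $(X_1\pten X_2)^*=\mathcal{L}(X_1\times X_2)$, and the two equivalences recorded above, namely that $X$ is SSD if and only if $(X,\K)$ has the $\Lpp$, while $X^*$ is SSD if and only if $(X,\K)$ has the $\Loo$. With these in hand both statements become assertions about the Franchetti--Pay\'a condition of Theorem~\ref{thm:FP}, and the entire difficulty is concentrated in passing between \emph{elementary} tensors $x_1\otimes x_2$, which encode the bilinear pair $(X_1\times X_2,\K)$, and \emph{arbitrary} elements of $B_{X_1\pten X_2}$, which encode the linear pair $(X_1\pten X_2,\K)$. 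The observation that makes the bridge possible is that $\|x_1\otimes x_2\|_\pi=\|x_1\|\,\|x_2\|$, so $x_1\otimes x_2\in S_{X_1\pten X_2}$ whenever $(x_1,x_2)\in S_{X_1}\times S_{X_2}$, and that for such a point the support set is exactly $D(x_1\otimes x_2)=\{B\in S_{\mathcal{L}(X_1\times X_2)}:B(x_1,x_2)=1\}$, i.e. the norm-one bilinear forms attaining their norm at $(x_1,x_2)$.

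For part (i) I would fix $\e>0$ and $(x_1,x_2)\in S_{X_1}\times S_{X_2}$ and apply the SSD hypothesis of $X_1\pten X_2$ at the point $x_1\otimes x_2$, which via Theorem~\ref{thm:FP} yields $\delta>0$ with $\dist(B,D(x_1\otimes x_2))<\e$ for every $B\in B_{\mathcal{L}(X_1\times X_2)}$ satisfying $\re B(x_1,x_2)>1-\delta$. Given a norm-one bilinear form $A$ with $|A(x_1,x_2)|>1-\delta$, choose a unimodular scalar $\theta$ so that $\re(\theta A)(x_1,x_2)=|A(x_1,x_2)|>1-\delta$; the condition then produces $B'\in D(x_1\otimes x_2)$ with $\|\theta A-B'\|<\e$. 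Setting $B=\overline{\theta}B'$ gives a norm-one bilinear form with $|B(x_1,x_2)|=1$ and, since multiplication by $\overline{\theta}$ is an isometry, $\|A-B\|<\e$. This is precisely the $\Lpp$ for $(X_1\times X_2,\K)$ with $\eta(\e,x_1,x_2)=\delta$, so the implication follows almost immediately from the geometry of elementary tensors.

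For the counterexample to the converse I would take $X_1=X_2=\ell_2$ and use that $\ell_2\pten\ell_2$ is isometrically the space of trace-class (nuclear) operators on $\ell_2$, which is in turn the dual $\mathcal{K}(\ell_2)^*$ of the compact operators. Since $\mathcal{K}(\ell_2)$ is not reflexive and SSD of a dual space forces reflexivity of its predual (\cite[Theorem~3.3]{FP}), $\ell_2\pten\ell_2$ cannot be SSD. On the other hand $(\ell_2\times\ell_2,\K)$ does have the $\Lpp$, which can be checked directly from the Hilbert-space structure of $\ell_2$ (uniform smoothness together with the representation of bilinear forms by bounded operators). Together these show that the implication in (i) is not reversible.

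For part (ii) the governing identity is that $(X_1\pten X_2,\K)$ has the $\Loo$ if and only if $\mathcal{L}(X_1\times X_2)=(X_1\pten X_2)^*$ is SSD, so the task is to compare this with the \emph{bilinear} $\Loo$, in which one perturbs the domain point $(x_1,x_2)$ inside $S_{X_1}\times S_{X_2}$. Both implications rest on the same elementary-versus-general passage. Starting from SSD of the dual and a near-attaining pair $(x_1,x_2)$, the elementary tensor $x_1\otimes x_2$ is nearly attaining for $A$ as a point of $S_{(X_1\pten X_2)^*}$, so Theorem~\ref{thm:FP} supplies an attaining $w\in D(A)$ close to it; here the role of the AP of $X_1$ together with strict convexity (or the sequential Kadec-Klee property) is to guarantee that such a general attaining $w$ may be replaced by an elementary tensor $x_1^0\otimes x_2^0$ with each $x_i^0$ close to $x_i$. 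The reverse implication is where I expect the main obstacle: to verify SSD of $\mathcal{L}(X_1\times X_2)$ one must control \emph{arbitrary} $z\in B_{X_1\pten X_2}$ with $\re A(z)\to 1$ and show $\dist(z,D(A))\to 0$, while the bilinear $\Loo$ only gives information at elementary tensors. The plan is to use $B_{X_1\pten X_2}=\cco(B_{X_1}\otimes B_{X_2})$ and the AP of $X_1$ to approximate $z$ by a finite convex combination of elementary tensors, argue that near-attainment forces the mass of this combination to concentrate on pieces at which $A$ nearly attains, apply the bilinear $\Loo$ to each such piece, and reassemble; strict convexity or the sequential Kadec-Klee property is what lets the perturbed pieces converge to a single element of $D(A)$ rather than spreading apart. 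Making this concentration argument quantitative and uniform in $z$ is the delicate step and the true heart of the equivalence.
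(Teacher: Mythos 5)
You should first note that the paper does not prove Theorem~\ref{thm motivation} at all: item (i) is quoted from \cite{DKLM2} and item (ii) from \cite{DR}, so your proposal has to be measured against those arguments and against the machinery this paper builds around them. Your proof of the implication in (i) is correct and is the standard one: SSD of $X_1\pten X_2$ at the unit elementary tensor $x_1\otimes x_2$, read through Theorem~\ref{thm:FP} and the identity $(X_1\pten X_2)^*=\mathcal{L}(X_1\times X_2)$, gives the bilinear $\Lpp$ after the routine unimodular rotation, since $D(x_1\otimes x_2)$ consists exactly of the norm-one bilinear forms attaining the value $1$ at $(x_1,x_2)$. The non-SSD half of the counterexample is also fine ($\ell_2\pten\ell_2\cong\mathcal{K}(\ell_2)^*$ plus \cite[Theorem~3.3]{FP}). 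The weak point of (i) is the claim that $(\ell_2\times\ell_2,\K)$ has the $\Lpp$: uniform smoothness of the factors is not the operative reason (the paper recalls that even SSD of the factors does not imply the bilinear $\Lpp$, see \cite[Remark~3.3]{DKLM}, and smoothness alone runs into the same normalization problem: the naive perturbation $T\mapsto T+(\bar y_0-Tx_0)\langle\cdot,x_0\rangle$ attains at $x_0$ but has norm $>1$). What makes $\ell_2$ work is unitary invariance: the bilinear Bishop--Phelps--Bollob\'as theorem of \cite{ABGM} combined with micro-transitivity of the Hilbert norm, which is precisely Theorem~\ref{theorem:micro} together with Propositions~\ref{thm BPBpp for bilinear and polynomial} and \ref{local} of this paper.

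In (ii) there is a genuine gap, and moreover you have the roles of the hypotheses inverted. The implication (bilinear $\Loo$) $\Rightarrow$ (tensor $\Loo$), which you single out as ``the true heart'' and where you invoke strict convexity or the Kadec--Klee property, in fact holds for \emph{arbitrary} Banach spaces: in your own concentration scheme, once the bilinear $\Loo$ produces attaining pairs $(u_i,v_i)$ near the good pieces, set $\theta_i=A(u_i,v_i)$ and form the renormalized combination $w=\sum_i\tilde\lambda_i\,(\overline{\theta_i}u_i)\otimes v_i$; then $L_A(w)=1$ and $\|w\|_\pi\leq 1$, hence $\|w\|_\pi=1$, so $w$ itself lies in $D(A)$ and is close to $z$. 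No ``convergence of the pieces to a single element'' is needed, because $D(A)$ contains non-elementary tensors; this is also why the paper proves the analogous implication (e)$\Rightarrow$(a) in Theorems~\ref{theoremA} and \ref{multilineartheoremA} with no assumptions of this kind. The direction that genuinely requires the AP together with strict convexity or the sequential Kadec--Klee property of $X_1$ --- and which is the actual content of \cite{DR} --- is the converse, (tensor $\Loo$) $\Rightarrow$ (bilinear $\Loo$), and for it your proposal contains no argument: you merely assert that the hypotheses ``guarantee'' that a general attaining tensor $w$ with $\|w-x_1\otimes x_2\|_\pi<\e$ can be replaced by an attaining elementary tensor close to $(x_1,x_2)$. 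That extraction is the hard step. Roughly, the AP and the attainment $|L_A(w)|=\|w\|_\pi=1$ are used to produce an optimal representation $w=\sum_n\lambda_n u_n\otimes v_n$ with $\sum_n\lambda_n=1$, which forces $|A(u_n,v_n)|=1$ for every $n$, so $A$ attains at each piece; strict convexity or the sequential Kadec--Klee property is then what forces some piece to be close to $(x_1,x_2)$ --- without it this fails, since an optimal representation of an elementary tensor can have all pieces far from it (already in $\ell_\infty^2$ one has $e_1\otimes y=\tfrac12\left((1,1)\otimes y+(1,-1)\otimes y\right)$). Until you supply this argument, the central implication of (ii) is unproved.
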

Our goal is to obtain \emph{differentiability properties} of symmetric tensor products and its dual spaces (the spaces of homogeneous polynomials). In that sense, a \emph{polynomial version} of Theorem~\ref{thm motivation}(ii) would be helpful to obtain a relation between strong subdifferentiability of $(\sten X)^*=\mathcal{P}(^NX)$ and the $\NLoo$. Such a relation would help us to prove the strong subdifferentiability of $\mathcal{P}(^NX)$ for many Banach spaces $X$. A similar argument could be reproduced to obtain strong subdifferentiability of the symmetric tensor product $\sten X$, if we could find a relation between this property and the $N$-homogeneous polynomial ${\bf L}_{p,p}$. Unfortunately (or not) we cannot expect that since, as we state in Theorem~\ref{thm motivation}(i), even in the bilinear case the strong subdifferentiability of the projective tensor product cannot be deduced from the \emph{bilinear} $\Lpp$ property. At this point is where the (uniform) strong subdifferentiability on certain subsets appears, as the link between a differentiability property of the symmetric tensor product and the (uniform) $N$-homogeneous polynomial ${\bf L}_{p,p}$.

\subsection{Main results} We state now our main results, which will be proved in Sections~\ref{Section:TheoremAandB} and \ref{Section:TheoremC}. Although we are mainly interested in differentiability properties of spaces of polynomials and symmetric tensor products, we also state some results in the context of multilinear mappings and (full, not symmetric) projective tensor products. We focus first in the relation between strong subdifferentiability of the space of homogeneous polynomials and the $\NLoo$ (see Definition~\ref{def Lpp and Loo}). It is worth noting that we obtain results not only in the scalar-valued case, but also for polynomials taking values in a uniformly smooth space. 

\begin{mainth} \label{theoremA} 
Let $N \in \N$, let $X$ be a (reflexive) Banach space with the CAP and the sequential Kadec-Klee property and let $Y$ be a uniformly convex Banach space. Then, the following are equivalent.
	\begin{enumerate}
	\itemsep0.3em 
		\item[(a)] $\mathcal{P}(^N X, Y^*)$ is SSD.
		\item[(b)] The pair $\left( \left( \sten X \right) \pten Y, \K \right)$ has the {\bf L}$_{o,o}$ (for linear functionals).	
		\item[(c)] $\mathcal{P}(^N X, Y^*)$ is reflexive.
		\item[(d)] $\mathcal{P}(^N X, Y^*)=\mathcal{P}_{wsc}(^N X, Y^*)$.
		\item[(e)] The pair $(X, Y^*)$ has the $N$-homogeneous polynomial $\Loo$.
	\end{enumerate} 
\end{mainth}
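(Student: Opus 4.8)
The plan is to establish the hinge equivalence (a)$\Leftrightarrow$(b) directly and then to run the cycle (a)$\Rightarrow$(c)$\Rightarrow$(d)$\Rightarrow$(e)$\Rightarrow$(a). Everything hangs on the isometric identification $\left(\left(\sten X\right)\pten Y\right)^{*}=\mathcal{P}(^{N}X,Y^{*})$ recalled in Section~\ref{preliminars}, which realises $\mathcal{P}(^{N}X,Y^{*})$ as the dual of $Z:=\left(\sten X\right)\pten Y$. For (a)$\Leftrightarrow$(b) I would invoke the dual form of the Franchetti--Pay\'a characterisation stated above, namely that a dual space $W^{*}$ is SSD if and only if the pair $(W,\K)$ has the $\Loo$; taking $W=Z$ yields the equivalence with no further hypotheses. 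For (a)$\Rightarrow$(c) I would use that SSD of a dual space forces its predual to be reflexive (\cite[Theorem~3.3]{FP}): if $Z^{*}$ is SSD then $Z$ is reflexive, and hence so is $Z^{*}=\mathcal{P}(^{N}X,Y^{*})$.

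The implication (c)$\Rightarrow$(d) rests on the classical characterisations of reflexivity of spaces of homogeneous polynomials in terms of weak (sequential) continuity (see \cite{DimZal, Gon, GJ}), and this is where the reflexivity of $X$, the reflexivity of $Y$ (a consequence of uniform convexity) and the CAP are used: for reflexive $X$ with the CAP and reflexive $Y^{*}$, reflexivity of $\mathcal{P}(^{N}X,Y^{*})$ forces every element to be weakly continuous on bounded sets and hence, by a standard Eberlein--\v{S}mulian argument, to belong to $\mathcal{P}_{wsc}(^{N}X,Y^{*})$; this gives (d).

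For (d)$\Rightarrow$(e) I would argue by contradiction and use the sequential Kadec--Klee property of $X$. Fixing $P$ with $\|P\|=1$ and assuming the $N$-homogeneous polynomial $\Loo$ fails, one finds $\e_{0}>0$ and $x_{n}\in S_{X}$ with $\|P(x_{n})\|\to 1$ but $\dist(x_{n},\NA(P))\geq\e_{0}$. By reflexivity I pass to a subsequence with $x_{n}\xrightarrow{w}x_{0}$; weak sequential continuity of $P$ together with $\|P(x_{n})\|\to1$ forces $\|P(x_{0})\|=1$, whence $\|x_{0}\|=1$ (the upper bound is weak lower semicontinuity of the norm, the lower bound is $1=\|P(x_{0})\|\leq\|x_{0}\|^{N}$) and $x_{0}\in\NA(P)$. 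The sequential Kadec--Klee property upgrades $x_{n}\xrightarrow{w}x_{0}$ to $\|x_{n}-x_{0}\|\to0$, contradicting $\dist(x_{n},\NA(P))\geq\e_{0}$.

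The crux, and the step I expect to be the main obstacle, is (e)$\Rightarrow$(a) --- equivalently (e)$\Rightarrow$(b) --- the \emph{polynomial} counterpart of Theorem~\ref{thm motivation}(ii), where the factor-wise Bishop-Phelps-Bollob\'as data of the polynomial $\Loo$ must be promoted to the linearised $\Loo$ for the pair $(Z,\K)$. Given $P$ with $\|P\|=1$ and $z\in S_{Z}$ with $|\langle z,P\rangle|$ close to $1$, I would exploit $B_{Z}=\cco\bigl(\{\,\otimes^{N}x\otimes y:x\in S_{X},\,y\in S_{Y}\,\}\bigr)$ together with the CAP of $X$ to replace $z$ by a nearby finite convex combination $\sum_{k}t_{k}\,\otimes^{N}x_{k}\otimes y_{k}$ of norm-one elementary tensors along which $\langle\,\cdot\,,P\rangle$ is still nearly maximal. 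A barycentre estimate then shows that, after adjusting a common phase, $P(x_{k})(y_{k})$ is close to $1$ for all indices $k$ outside a set of small total weight; on the dominant indices I would apply (e) to obtain norm-attaining $x_{k}^{0}$ with $\|P(x_{k}^{0})\|=1$ close to $x_{k}$, choose $y_{k}^{0}\in S_{Y}$ with $P(x_{k}^{0})(y_{k}^{0})=1$, and use the uniform smoothness of $Y^{*}$ (equivalently, the uniform convexity of $Y$) to keep $y_{k}^{0}$ close to $y_{k}$ uniformly in $k$. Renormalising the convex combination onto the dominant indices and reassembling $z_{0}:=\sum_{k}t_{k}\,\otimes^{N}x_{k}^{0}\otimes y_{k}^{0}$ should produce a point of $S_{Z}$ with $|\langle z_{0},P\rangle|=1$ and $\|z_{0}-z\|$ small, which is exactly the $\Loo$ for $(Z,\K)$ and hence condition (b). The genuinely delicate points --- where most of the work will concentrate --- are the control of the phases in the complex case, the uniformity of the perturbations $x_{k}^{0},y_{k}^{0}$ across the (possibly many) dominant indices, and the careful use of the CAP to pass from an arbitrary $z$ to a finitely supported convex combination without destroying the near-maximality of the pairing.
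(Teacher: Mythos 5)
Your proposal is correct, and on four of the five arrows it follows the paper's own route: (a)$\Leftrightarrow$(b) is the Franchetti--Pay\'a duality applied to $Z:=\left(\sten X\right)\pten Y$, your (a)$\Rightarrow$(c) uses the same ingredient (\cite[Theorem~3.3]{FP}: SSD dual spaces have reflexive preduals) as the paper's (b)$\Rightarrow$(c), (c)$\Rightarrow$(d) is Mujica's theorem plus Aron--Herv\'es--Valdivia (note your citation here should be \cite{Muj}, not \cite{DimZal, Gon, GJ}; the latter give the multilinear analogue used for Theorem~\ref{multilineartheoremA}), and your (d)$\Rightarrow$(e) contradiction argument --- weak limit by reflexivity, norm convergence of $P(x_n)$ by weak sequential continuity, upgrade to norm convergence by the sequential Kadec--Klee property --- is exactly the paper's. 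The genuine divergence is at (e)$\Rightarrow$(a). The paper proves this implication directly, never touching tensor representations: for norm-one $P,Q$ and small $t>0$ it picks a norming point $x_t$ of $P+tQ$ (which exists because (e) makes every polynomial norm-attaining), uses (e) to find $z\in\NA(P)$ with $\|z-x_t\|$ small, picks $y_t\in S_Y$ norming the functional $(P+tQ)(x_t)$, moves it via the Kim--Lee characterization of uniform convexity \cite[Theorem~2.1]{KL} to some $y$ near $y_t$ with $P(z)(y)=1$, and then estimates $\frac{\|P+tQ\|-1}{t}-\tau(P,Q)\leq \re Q(x_t)(y_t)-\re Q(z)(y)$, which is small. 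Your route instead proves (e)$\Rightarrow$(b): decompose $z\in S_{Z}$ into a finite combination of elementary tensors, isolate the dominant indices by a barycenter estimate, perturb those indices using (e) and the uniform convexity of $Y$, correct the phases, and reassemble a norm-attaining tensor near $z$. This does work, and it is in fact the very argument the paper runs elsewhere --- in Lemma~\ref{NA-symm1} and Theorem~\ref{prop-star-equivalent-Loo} --- to obtain density of norm-attaining tensors and strong exposition of $C(P)$; so your proof of Theorem~\ref{theoremA} is longer than the paper's but delivers those byproducts along the way. Two of the difficulties you flag are non-issues: the CAP plays no role in reducing to finite combinations (that reduction is just the definition of the projective norm, exactly as in Lemma~\ref{NA-symm1}), and uniformity over the dominant indices is automatic, because the moduli $\eta(\cdot,P)$ from (e) and $\tilde{\eta}(\cdot)$ from \cite[Theorem~2.1]{KL} do not depend on the index.
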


As a consequence of the previous equivalence we deduce strong subdifferentiability of spaces of homogeneous polynomials between $\ell_p$, Lorentz and Orlicz sequence spaces. In view of Theorem~\ref{theoremA}, the involved spaces should satisfy certain hypotheses such as the compact approximation property, the sequential Kadec-Klee property and uniform convexity. Necessary and sufficient conditions for reflexivity, uniform convexity (and uniform smoothness) of Orlicz and Lorentz sequence spaces, as well as Boyd indexes $\alpha_M$ and $\beta_M$, were stated in subsection~\ref{Orlicz Lorentz}. It was also noted in there that the involved spaces satisfy the CAP and the sequential Kadec-Klee property. 

\begin{maincor}\label{corollaryA}
Let $1<p,q<\infty$ and let $M_1,M_2$ be Orlicz functions such that $1<\alpha_{M_i}, \beta_{M_i}<\infty$ for $i=1,2$. Suppose that $l_{M_2}$ is uniformly smooth.
\begin{enumerate}
\itemsep0.3em 
\item[\rm (i)] $\mathcal{P}(^N \ell_p)$ is SSD if and only if $N < p$. 
\item[\rm (ii)] $\mathcal{P}(^N \ell_p, \ell_{q})$ is SSD if and only if $Nq<p$. 
\item[\rm (iii)] $\mathcal{P}(^N l_{M_1})$ is SSD if and only if $N < \alpha_{M_1}$.
\item[\rm (iv)] $\mathcal{P}(^N l_{M_1}, l_{M_2})$ is SSD if and only if $N \beta_{M_2} < \alpha_{M_1}$.
\item[\rm (v)] $\mathcal{P}(^N d(w,p))$ is SSD if and only if $N < p$.
\item[\rm (vi)] $\mathcal{P}(^N d(w,p), l_{M_2})$ is SSD if and only if $N\beta_{M_2} < p$. 
\end{enumerate}
\end{maincor}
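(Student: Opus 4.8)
The plan is to read each item off the chain of equivalences in Theorem~\ref{theoremA}, which collapses strong subdifferentiability to a purely structural question about weakly sequentially continuous polynomials, and then to settle that question with the index computations recalled in Subsection~\ref{Orlicz Lorentz}. For each item I would first fix the pair $(X,Y)$ so that $Y^*$ is the prescribed range: for (i), (iii), (v) the range is $\K$, so $Y=\K$; for (ii) I set $Y=\ell_{q^*}$ with $1/q+1/q^*=1$; and for (iv), (vi) I take $Y$ to be the (reflexive Orlicz) predual of $l_{M_2}$, so that $Y^*=l_{M_2}$. The first task is then to verify that these choices meet the hypotheses of Theorem~\ref{theoremA}, namely that $X\in\{\ell_p,\,l_{M_1},\,d(w,p)\}$ is reflexive, has the CAP and the sequential Kadec-Klee property, and that $Y$ is uniformly convex.

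These verifications are routine given the facts collected in Subsection~\ref{Orlicz Lorentz}. Each domain space is reflexive in the stated parameter ranges ($1<p<\infty$; $1<\alpha_{M_1},\beta_{M_1}<\infty$), carries a (symmetric) Schauder basis and hence has the CAP, and has the sequential Kadec-Klee property: $\ell_p$ by uniform convexity, $l_{M_1}$ because the $\Delta_2$-condition at zero (equivalent to $\beta_{M_1}<\infty$) yields the Kadec-Klee property, and $d(w,p)$ by \cite{CasLin}. For the range, the only nontrivial point is the uniform convexity of $Y$ in (iv) and (vi): since $l_{M_2}$ is reflexive we have $Y=(l_{M_2})^*$, and by the \v{S}mulyan duality between uniform convexity and uniform smoothness, $Y$ is uniformly convex precisely because $Y^*=l_{M_2}$ is uniformly smooth, which is exactly the standing hypothesis. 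With the hypotheses in force, Theorem~\ref{theoremA} gives that $\mathcal{P}(^N X, Y^*)$ is SSD if and only if $\mathcal{P}(^N X, Y^*)=\mathcal{P}_{wsc}(^N X, Y^*)$ (equivalently, if and only if $\mathcal{P}(^N X, Y^*)$ is reflexive).

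It then remains to decide when every $N$-homogeneous polynomial from $X$ into $Z:=Y^*$ is weakly sequentially continuous. Here I would invoke the index characterization of this property from \cite{GJ, Gon, DimZal}: all such polynomials are weakly sequentially continuous exactly when $N\,u(Z)<l(X)$, where $l(X)$ is the lower index of the domain and $u(Z)$ the upper index of the range (with $u(\K)=1$ in the scalar case). Substituting the index values recalled in Subsection~\ref{Orlicz Lorentz}, namely $l(\ell_p)=u(\ell_p)=p$, $l(l_{M_1})=\alpha_{M_1}$, $u(l_{M_2})=\beta_{M_2}$ and $l(d(w,p))=p$, turns this single inequality into the six stated conditions: $N<p$ in (i), $N<\alpha_{M_1}$ in (iii), $N<p$ in (v); $Nq<p$ in (ii); $N\beta_{M_2}<\alpha_{M_1}$ in (iv); and $N\beta_{M_2}<p$ in (vi).

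The main obstacle is the sharpness of this last step, that is, the \emph{only if} direction. The sufficiency ($N\,u(Z)<l(X)$ forcing weak sequential continuity) follows from the upper/lower estimate machinery of Gonzalo and Jaramillo, but the converse requires exhibiting, whenever $N\,u(Z)\ge l(X)$, a bounded $N$-homogeneous polynomial that fails to be weakly sequentially continuous; a natural candidate is a diagonal polynomial $x\mapsto(x_i^N)_i$ landing in $Z$ along the symmetric basis, whose values on the basis vectors stay norm-one while the basis tends weakly to zero, thereby witnessing the failure of reflexivity. Care is needed at the boundary $N\,u(Z)=l(X)$ so that the inequalities come out strict, and this is precisely where the exact behaviour of the suprema and infima defining $\alpha_{M_1}$, $\beta_{M_2}$ and the indexes of $d(w,p)$ must be used, together with a check that the vector-valued condition genuinely involves $u(Z)$ and not merely $l(X)$.
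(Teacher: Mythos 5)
Your reduction is the same as the paper's: fix $Y$ so that $Y^*$ is the prescribed range, check the hypotheses of Theorem~\ref{theoremA} using the facts from Subsection~\ref{Orlicz Lorentz}, and collapse SSD to the question of whether $\mathcal{P}(^N X, Y^*)=\mathcal{P}_{wsc}(^N X, Y^*)$. The ``if'' direction is also handled as in the paper (via \cite{GJ} for Orlicz/Lorentz and \cite{Din} for $\ell_p$). The gap is in the ``only if'' direction, and you have correctly sensed where it is but your proposed repair does not work. First, there is no ``exact'' index characterization to invoke: the results of Gonzalo--Jaramillo give only the sufficiency $N\,u(Z)<l(X)\Rightarrow$ all polynomials are wsc; the converse is not a theorem in \cite{GJ, Gon, DimZal}, so it must be proved, and this is precisely the content of the corollary. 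Second, your candidate counterexample -- a diagonal polynomial $x\mapsto (x_i^N)_i$ defined directly on $X=l_{M_1}$ or $d(w,p)$ -- need not be bounded under the hypothesis $N\,u(Z)\geq l(X)$. Boundedness of a diagonal polynomial on an Orlicz space is governed by pointwise lower estimates on $M_1$ (essentially the \emph{upper} behaviour of $M_1$), not by the lower index $\alpha_{M_1}$; when $\alpha_{M_1}<\beta_{M_1}$, the inequality $N\geq \alpha_{M_1}$ gives no control on $\sum_i |x_i|^N$ for $x\in l_{M_1}$, so the map need not land in $Z$ or be continuous. The same objection applies to $d(w,p)$.

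The paper circumvents this with a transfer argument (Remark~\ref{remark not wsc}): if $\ell_p$ is isomorphic to a \emph{quotient} of $X$, $\ell_q$ is isomorphic to a \emph{subspace} of $Y$, and $Nq\geq p$, then $\mathcal{P}(^N X, Y)\neq \mathcal{P}_{wsc}(^N X, Y)$. The non-wsc polynomial is $P=i\circ Q\circ\pi$, where $Q$ is the diagonal polynomial on $\ell_p$ (where its boundedness for $Nq \geq p$ is clear), $\pi$ the quotient map and $i$ the embedding; note the extra subtlety that preimages under $\pi$ of the basis vectors $e_n$ are not weakly null, which the paper handles via Rosenthal's $\ell_1$-theorem, passing to a weakly Cauchy subsequence and using the weakly null differences $y_j=x_{n_{2j}}-x_{n_{2j+1}}$ (and defining $Q$ only on the even-indexed coordinates). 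The boundary cases that worry you are then settled not by analysis of the indices but by structural theorems of Lindenstrauss--Tzafriri: $\ell_{\alpha_{M_1}}$ is isomorphic to a quotient of $l_{M_1}$ and $\ell_{\beta_{M_2}}$ to a subspace of $l_{M_2}$ \cite{LTIII}, and $\ell_p$ is isomorphic to a quotient of $d(w,p)$ \cite{LTII}; since Remark~\ref{remark not wsc} only requires the non-strict inequality $Nq\geq p$, applying it with $p=\alpha_{M_1}$, $q=\beta_{M_2}$ covers equality. Without these two ingredients -- the quotient/subspace transfer with the Rosenthal argument, and the Lindenstrauss--Tzafriri realization of the indices as actual $\ell_p$ quotients and subspaces -- the necessity half of all six items remains unproved.
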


Note that the equivalence between (a) and (b) in Theorem~\ref{theoremA} follows immediately from the isometry $\mathcal{P}(^N X, Y^*)=(\left( \sten X \right) \pten Y)^*$ and the fact that the pair $(X, \mathbb{K})$ has the $\Loo$ if and only if $X^*$ is SSD. The implication (b)$\Rightarrow$(c) is trivial since, as we already mentioned in subsection~\ref{definition:SSD}, strongly subdifferentiable dual spaces are reflexive. The equivalence (c)$\Leftrightarrow$(d) is essentially contained in \cite{Muj} (see also \cite{J}), where it is proved that, if $X, Y$ are reflexive and $X$ has the CAP, then $\mathcal{P}(^NX,Y)$ is reflexive if and only if $\mathcal{P}(^NX,Y)=\mathcal{P}_w(^NX,Y)$, the subspace of polynomials which are weakly continuous on bounded sets. Since $X$ does not contain a copy of $\ell_1$, we have $\mathcal{P}_w(^NX,Y)=\mathcal{P}_{wsc}(^NX,Y)$ (see \cite[Theorem~2.9 and Proposition~2.12]{AroHerVal}) and this completes the equivalence between (c) and (d). The implications (d)$\Rightarrow$(e)$\Rightarrow$(a) are left for the next section; in the first one we need the sequential Kadec-Klee property of the space $X$, while the second holds for every reflexive Banach space $X$ and every uniformly convex Banach space $Y$, with no extra assumptions on $X$.

\begin{remark}\label{remark:Loo} Let us make some relevant observations related to the previous theorem.
\begin{enumerate}  
\item[(i)] In view of Theorem~\ref{theoremA} and Corollary~\ref{corollaryA}, it is natural to ask if there exist some reflexive Banach space which is not SSD. In \cite[Example~2]{Smith} it is shown an example of a reflexive Banach space $Z$ (a renorming of $\ell_2$) which is strictly convex but is not \emph{midpoint locally uniformly rotund} (see the proper definition in the mentioned article). Then $Z^*$ is reflexive and is not SSD. Indeed, by \cite[Theorem~2.5]{DKLM}, if a dual space $X^*$ is SSD, then $X$ is strictly convex if and only if $X$ is midpoint locally uniformly rotund. This example shows that the strong subdifferentiability is, indeed, a stronger property than reflexivity for dual spaces.
\item[\rm (ii)] Note that any of the statements in the previous theorem imply that the space $X$ is reflexive and, hence, this is a vaquous hypothesis (for this reason it appears in parenthesis). Indeed, if $\mathcal{P}(^N X, Y^*)=(\left( \sten X \right) \pten Y)^*$ is SSD, then it is reflexive (we already mentioned in Subsection~\ref{definition:SSD} that strongly subdifferentiable dual spaces are reflexive) and, consequently, $X$ and $Y$ are reflexive. Also, the $N$-homogeneous polynomial $\Loo$ of the pair $(X, Y^*)$ implies that every $P\in \mathcal{P}(^N X,Y^*)$ is norm-attaining and, consequently, every functional in $X^*$ attains its norm. Then, by James' theorem, $X$ is reflexive. It is worth mentioning that the reflexivity of the space $Y$ is necessary in (e)$\Rightarrow$(a). Indeed, if $X$ is a finite-dimensional space then the pair $(X,Y)$ has the $N$-homogeneous polynomial $\Loo$ for every Banach space $Y$ (the proof is analogous to \cite[Theorem~2.4]{D}, where the statement is proved in the linear setting). Then, if $X$ is finite-dimensional and $Y$ is non-reflexive, the pair $(X,Y)$ has the $N$-homogeneous polynomial $\Loo$ and $\mathcal{P}(^NX,Y)$ is not SSD, since fails to be reflexive. Moreover, in order to obtain examples of spaces $X, Y$ such that $\mathcal{P}(^NX,Y^*)$ is SSD, $X^*$ and $Y^*$ need to be SSD. Indeed, it is not difficult to see that if $(\left( \sten X \right) \pten Y)^*$ is SSD, then $X^*$ and $Y^*$ are SSD (this can be found, for instance, in the proof of \cite[Theorem~B]{DR}).
\item[\rm (iii)] In view of the previous remark, it is natural to ask if, in Theorem~\ref{theoremA}, the uniform convexity of $Y$ can be relaxed to "$Y^*$ is SSD". We do not know if we can change the uniform convexity hypothesis, which we use in the proof of (e)$\Rightarrow$(a).
\item[\rm (iv)] If $Y$ is reflexive and every $P\in \mathcal{P}(^NX,Y^*)$ attains its norm, then $\mathcal{P}(^NX,Y^*)$ is reflexive. Indeed, in virtue of the isometry $\mathcal{P}(^N X, Y^*)=((\sten X) \pten Y)^*$ and the hypotheses (here we need the reflexivity of $Y$), we deduce that every linear functional in $((\sten X) \pten Y)^*$ is norm attaining and, hence, $((\sten X) \pten Y)^*$ is reflexive (this is proved, for instance, in \cite[Theorem~1.3]{JarMor}). 
\item[\rm (v)] A linear bounded operator defined on a reflexive Banach space is compact if and only if maps weakly convergent sequences into norm convergent sequences. Having this in mind, it might be worth mentioning that item (d) in Theorem~\ref{theoremA} is not equivalent to say that every $P \in \mathcal{P}(^N X, Y^*)$ is a \emph{compact polynomial}, provided that $N\geq 2$ (recall that $P \in \mathcal{P}(^N X, Y^*)$ is compact if maps the unit ball of $X$ into a relatively compact set of $Y^*$). For instance, $N$-homogeneous polynomials from $\ell_2$ to $\mathbb{K}$ are compact but $\mathcal{P}(^N \ell_2)$ is not reflexive provided $N\geq 2$. It is also worth mentioning that, if $X$ and $Y^*$ are reflexive Banach spaces both with the approximation property, then item (d) (or, equivalently, item (c)) is equivalent to say that every $P \in \mathcal{P}(^N X, Y^*)$ is approximable, that is, is in the closure of finite type polynomials. This was proved by Alencar in \cite{Ale}.
\end{enumerate}
\end{remark}

We state now the multilinear counterpart of Theorem~\ref{theoremA} and Corollary~\ref{corollaryA}.

\begin{mainth} \label{multilineartheoremA} 
Let $N \in \N$ and $X_1,\ldots,X_N$ be reflexive Banach spaces with Schauder bases such that $X_1, \dots, X_{N-1}$ have the sequential Kadec-Klee property and $X_N$ is uniformly convex. Then, the following are equivalent.
\begin{enumerate}
\itemsep0.3em 
\item[(a)] The norm of $\mathcal{L}(X_1\times\cdots\times X_N)=\mathcal{L}(X_1\times\cdots\times X_{N-1},X_N^*)$ is SSD.
\item[(b)] The pair $\left( X_1\pten\cdots \pten X_N , \K \right)$ has the {\bf L}$_{o,o}$ (for linear functionals).
\item[(c)] $\mathcal{L}(X_1\times\cdots\times X_N)$ is reflexive.
\item[(d)] $\mathcal{L}(X_1\times\cdots\times X_{N-1}, X_N^*)=\mathcal{L}_{wsc}(X_1\times\cdots\times X_{N-1}, X_N^*)$.
\item[(e)] The pair $(X_1\times \cdots\times X_N, \K)$ has the $\Loo$ (for multilinear forms).
\end{enumerate}
\end{mainth}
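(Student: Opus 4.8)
The plan is to run the same cycle of implications as for Theorem~\ref{theoremA}, replacing the symmetric isometry $\mathcal{P}(^NX,Y^*)=((\sten X)\pten Y)^*$ by its non-symmetric analogue. Write $Z:=X_1\pten\cdots\pten X_N$, so that $Z^*=\mathcal{L}(X_1\times\cdots\times X_N)$ isometrically and, since each $X_i$ is reflexive, $Z^{**}=Z$. The linear equivalence ``$(Z,\K)$ has the $\Loo$ if and only if $Z^*$ is SSD'' recorded in Subsection~\ref{section SSD} gives (a)$\Leftrightarrow$(b) immediately. For (b)$\Rightarrow$(c) I would only invoke that an SSD dual space is reflexive (from \cite{FP}): by (a)$\Leftrightarrow$(b) the dual space $\mathcal{L}(X_1\times\cdots\times X_N)=Z^*$ is SSD, hence reflexive.

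For (c)$\Leftrightarrow$(d) I would appeal to the Mujica-type reflexivity criterion for spaces of multilinear mappings that is the exact counterpart of the result used for Theorem~\ref{theoremA}. Since the $X_i$ are reflexive with Schauder bases, they have the approximation property and contain no copy of $\ell_1$; hence $\mathcal{L}(X_1\times\cdots\times X_{N-1},X_N^*)$ is reflexive if and only if it coincides with the subspace of mappings weakly continuous on bounded sets, and the absence of $\ell_1$ makes the latter agree with $\mathcal{L}_{wsc}$.

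The substantive implications are (d)$\Rightarrow$(e) and (e)$\Rightarrow$(a). For (d)$\Rightarrow$(e) I would first prove a sequential version of the multilinear $\Loo$: given $A$ with $\|A\|=1$ and points $(x_1^n,\ldots,x_N^n)$ on the spheres with $|A(x_1^n,\ldots,x_N^n)|\to 1$, reflexivity lets me pass to a subsequence with $x_i^n\xrightarrow{w}x_i^0$ for each $i$; weak sequential continuity (guaranteed by (d)) yields $|A(x_1^0,\ldots,x_N^0)|=1$, and since $1=|A(x_1^0,\ldots,x_N^0)|\le\prod_i\|x_i^0\|\le 1$ by weak lower semicontinuity of the norm, every $\|x_i^0\|=1$. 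The sequential Kadec--Klee property then upgrades $x_i^n\xrightarrow{w}x_i^0$ to $\|x_i^n-x_i^0\|\to 0$ for $i\le N-1$, while for $i=N$ the same follows from uniform convexity of $X_N$ (which implies the Kadec--Klee property). A standard contradiction argument converts this sequential statement into the $\eta(\e,A)$ formulation of the $\Loo$.

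The step I expect to be the main obstacle is (e)$\Rightarrow$(a), where the geometry of the projective tensor norm enters. By Theorem~\ref{thm:FP} it suffices to show that each $A\in S_{\mathcal{L}}$ strongly exposes $D(A)=\{z\in S_Z:\langle z,A\rangle=1\}$ (using $Z^{**}=Z$). Given $z\in B_Z$ with $\re\langle z,A\rangle$ close to $1$, I would reduce to elementary tensors via $B_Z=\overline{\co}(B_{X_1}\otimes\cdots\otimes B_{X_N})$ and use the multilinear $\Loo$ to replace an almost-norming elementary tensor $x_1\otimes\cdots\otimes x_N$ by a genuinely norming one $x_1^0\otimes\cdots\otimes x_N^0\in D(A)$ that is coordinatewise close. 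The delicate point is to convert this coordinatewise closeness into closeness in the projective norm and then into $\dist(z,D(A))<\e$: here uniform convexity of $X_N$ (equivalently, uniform smoothness of the range space $X_N^*$) supplies the quantitative modulus controlling the last coordinate, which together with the Kadec--Klee control on the first $N-1$ coordinates bounds the projective distance uniformly. This is exactly the place where uniform convexity cannot be weakened to ``$X_N^*$ is SSD'', mirroring the polynomial situation discussed in Remark~\ref{remark:Loo}(iii).
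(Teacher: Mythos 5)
Your implication cycle matches the paper's, and most of it is sound: (a)$\Leftrightarrow$(b) and (b)$\Rightarrow$(c) are exactly the paper's arguments, (c)$\Leftrightarrow$(d) is what the paper gets from \cite{DimZal} (this citation is precisely why the theorem assumes Schauder bases rather than the CAP), and your (d)$\Rightarrow$(e) is correct and in fact a nice streamlining: the paper first proves the $\Loo$ for the $(N-1)$-linear maps into $X_N^*$ and then adjusts the last coordinate via the Kim--Lee characterization of uniform convexity \cite{KL}, whereas you treat all $N$ coordinates symmetrically, using uniform convexity of $X_N$ only through the sequential Kadec--Klee property it implies (your passage from (d), which concerns the vector-valued map $\tilde A$, to joint weak sequential continuity of the scalar form is a one-line pairing argument, but it is valid).

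The genuine gap is in (e)$\Rightarrow$(a), and it begins with the sentence ``since each $X_i$ is reflexive, $Z^{**}=Z$''. This is false: the projective tensor product of reflexive spaces need not be reflexive --- $\ell_2\pten\ell_2$ is the standard counterexample, since its dual $\mathcal{L}(\ell_2\times\ell_2)\cong\mathcal{L}(\ell_2,\ell_2)$ contains an isometric copy of $\ell_\infty$. Worse, reflexivity of $Z$ is equivalent to statement (c), so assuming it at the outset of (e)$\Rightarrow$(a) is circular. (It can be rescued \emph{from} (e): every $N$-linear form then attains its norm at an elementary tensor, hence every functional on $Z$ is norm-attaining and James' theorem gives reflexivity of $Z$; but that is not the justification you gave.) Even granting reflexivity, your route through Theorem~\ref{thm:FP} defers exactly the hard step --- passing from the $\Loo$, which produces nearby norming \emph{elementary} tensors, to the estimate $\dist(z,D(A))<\e$ for \emph{arbitrary} $z\in B_Z$ with $\re\langle z,A\rangle\approx 1$ --- to an unproven ``delicate point''; this needs a convex-combination and phase-adjustment argument of the kind the paper carries out (in the polynomial setting) in Lemma~\ref{NA-symm1} and Theorem~\ref{prop-star-equivalent-Loo}, and the ingredient you propose for it is misidentified: coordinatewise closeness of elementary tensors converts into projective-norm closeness by a trivial telescoping estimate $\|x_1\otimes\cdots\otimes x_N-z_1\otimes\cdots\otimes z_N\|_\pi\le\sum_i\|x_i-z_i\|$, with no modulus of convexity involved. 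Indeed the paper's proof of (e)$\Rightarrow$(a) uses no uniform convexity and no bidual at all; it works with the difference quotient directly: for $0<t<\eta(\e,A)/2$ and $\|L\|=1$, pick a tuple $\mathbf{x}_t$ at which $A+tL$ attains its norm (norm attainment is a consequence of (e)); then $\re A(\mathbf{x}_t)\ge\|A+tL\|-2t>1-\eta(\e,A)$, so (e) yields a tuple $\mathbf{z}_t$ with $A(\mathbf{z}_t)=1$ and $\|x_i^t-z_i^t\|<\e$, and since evaluation at $\mathbf{z}_t$ is a support functional at $A$ one gets $\frac{\|A+tL\|-1}{t}-\tau(A,L)\le\re L(\mathbf{x}_t)-\re L(\mathbf{z}_t)\le N\e$. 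Consequently your closing claim that uniform convexity of $X_N$ ``cannot be weakened'' in step (e)$\Rightarrow$(a) is also off target: in both your route and the paper's, uniform convexity is needed only for (d)$\Rightarrow$(e).
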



\begin{maincor}\label{corollaryB}
Let $1<p_1, \dots, p_N,q<\infty$ and let $M_1,\dots, M_{N+1}$ be Orlicz functions satisfying the $\Delta_2$-condition and such that $1<\alpha_{M_1}, \beta_{M_1}, \dots, \alpha_{M_{N+1}}, \beta_{M_{N+1}}<\infty$. Suppose also that $l_{M_{N+1}}$ is uniformly smooth.
\begin{enumerate}
\itemsep0.3em 
\item[\rm (i)] $\mathcal{L}(\ell_{p_1}\times \cdots\times \ell_{p_N})$ is SSD if and only if $\frac{1}{p_1}+\cdots+\frac{1}{p_N}<1$. 
\item[\rm (ii)] $\mathcal{L}(\ell_{p_1}\times \cdots\times \ell_{p_N}, \ell_q)$ is SSD if and only if $\frac{1}{p_1}+\cdots+\frac{1}{p_N}<\frac{1}{q}$.
\item[\rm (iii)] $\mathcal{L}(l_{M_1}\times \cdots\times l_{M_N})$ is SSD if and only if $\frac{1}{\alpha_{M_1}}+\cdots+\frac{1}{\alpha_{M_N}}<1$.
\item[\rm (iv)] $\mathcal{L}(l_{M_1}\times \cdots\times l_{M_N}, l_{M_{N+1}})$ is SSD if and only if $\frac{1}{\alpha_{M_1}}+\cdots+\frac{1}{\alpha_{M_N}}<\frac{1}{\beta_{M_{N+1}}}$.
\item[\rm (v)] $\mathcal{L}(d(w_1,p_1)\times \cdots\times d(w_N, p_N))$ is SSD if and only if $\frac{1}{p_1}+\cdots+\frac{1}{p_N}<1$.
\item[\rm (vi)] $\mathcal{L}(d(w_1,p_1)\times \cdots\times d(w_N, p_N), l_{M_{N+1}})$ is SSD if and only if $\frac{1}{p_1}+\cdots+\frac{1}{p_N}<\frac{1}{\beta_{M_{N+1}}}$.
\end{enumerate}
\end{maincor}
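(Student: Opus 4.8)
The plan is to read the whole corollary off Theorem~\ref{multilineartheoremA}. For each family of factors the equivalences there collapse to the single chain: the norm of $\mathcal{L}(X_1\times\cdots\times X_N)$ is SSD $\iff$ $\mathcal{L}(X_1\times\cdots\times X_N)$ is reflexive $\iff$ $\mathcal{L}(X_1\times\cdots\times X_{N-1},X_N^*)=\mathcal{L}_{wsc}(X_1\times\cdots\times X_{N-1},X_N^*)$. Hence the corollary amounts to deciding, for $\ell_p$, Orlicz and Lorentz factors, \emph{when every continuous multilinear form is weakly sequentially continuous}. For this I would invoke the lower-index characterization of Gonzalo and Jaramillo (\cite{GJ,Gon}): on reflexive spaces with symmetric (Schauder) bases one has $\mathcal{L}=\mathcal{L}_{wsc}$ if and only if $\frac{1}{l(X_1)}+\cdots+\frac{1}{l(X_N)}<1$. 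Sufficiency follows by choosing $p_i<l(X_i)$ with $\sum 1/p_i<1$, so that each $X_i$ has property $S_{p_i}$, and running a Hölder estimate; necessity follows, when the sum is $\geq 1$, by producing a bounded form that is not weakly sequentially continuous --- for $\ell_p$ this is transparently the diagonal form $A(x^1,\dots,x^N)=\sum_k x^1_k\cdots x^N_k$, which carries the weakly null vectors $(e_n,\dots,e_n)$ to $1$, while for the Orlicz and Lorentz factors it is supplied by the index analysis of \cite{Gon}.

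With this in hand, the scalar items (i), (iii), (v) are substitutions of the index values recorded in Subsection~\ref{Orlicz Lorentz}, namely $l(\ell_p)=p$, $l(l_M)=\alpha_M$ for reflexive $l_M$, and $l(d(w,p))=p$, turning $\sum 1/l(X_i)<1$ into the stated inequalities. Before substituting one checks the hypotheses of Theorem~\ref{multilineartheoremA}: reflexivity comes from $1<p<\infty$ and from $1<\alpha_{M_i}\leq\beta_{M_i}<\infty$; the canonical vectors form a Schauder basis (for $l_M$ via the $\Delta_2$-condition, which gives $l_M=h_M$); and the sequential Kadec--Klee property of the first $N-1$ factors holds for $\ell_p$, for $l_M$ under $\Delta_2$, and for $d(w,p)$ by \cite{CasLin}. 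The only delicate hypothesis is uniform convexity of the distinguished factor $X_N$: for $\ell_{p_N}$ it is automatic, whereas for the Orlicz and Lorentz families this is precisely the point that must be secured (so that $X_N^*$ is uniformly smooth, as needed in the implication (e)$\Rightarrow$(a) of the theorem).

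For the vector-valued items (ii), (iv), (vi) I would first absorb the range into an extra factor. Writing the range as a dual, $\ell_q=(\ell_{q'})^*$ and (by reflexivity) $l_{M_{N+1}}=(l_{M_{N+1}^*})^*$, the isometric identification $\mathcal{L}(X_1\times\cdots\times X_N,Z^*)=\mathcal{L}(X_1\times\cdots\times X_N\times Z)$ recasts each space as a scalar $(N+1)$-linear form space whose last factor is $Z=\ell_{q'}$, respectively $Z=l_{M_{N+1}^*}$. Here uniform smoothness of the range is exactly what makes $Z$ uniformly convex, so Theorem~\ref{multilineartheoremA} applies with no extra assumption and the criterion becomes $\sum_{i=1}^N 1/l(X_i)+1/l(Z)<1$. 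It then remains to rewrite the extra term using $\frac{1}{q'}=1-\frac1q$ and the Boyd-index duality $\frac{1}{\alpha_{M^*}}=1-\frac1{\beta_M}$: since $l(\ell_{q'})=q'$ and $l(l_{M_{N+1}^*})=\alpha_{M_{N+1}^*}$, the extra term equals $1-\frac1q$, respectively $1-\frac{1}{\beta_{M_{N+1}}}$, and cancelling against the right-hand $1$ yields exactly $\sum 1/p_i<1/q$ in (ii), $\sum 1/\alpha_{M_i}<1/\beta_{M_{N+1}}$ in (iv), and $\sum 1/p_i<1/\beta_{M_{N+1}}$ in (vi).

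I anticipate that the main obstacle is the clean use of the reflexivity-versus-index equivalence, especially its \emph{necessity} half: one must be certain that $\sum 1/l(X_i)\geq 1$ genuinely produces a non weakly sequentially continuous form for the Orlicz and Lorentz factors --- not merely for $\ell_p$, where the diagonal form is transparent --- and this is where the index computations of \cite{Gon} and the estimates behind property $S_p$ carry the load. A second, more bookkeeping, difficulty is the uniform convexity requirement of Theorem~\ref{multilineartheoremA} in the scalar Orlicz and Lorentz items: in the vector-valued items it is handed to us for free by the uniform smoothness of the range, but for the distinguished factor in (iii) and (v) it must be verified directly.
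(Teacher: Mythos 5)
Your overall strategy is the paper's: reduce via Theorem~\ref{multilineartheoremA} to deciding when $\mathcal{L}=\mathcal{L}_{wsc}$ (equivalently, when the space of forms is reflexive), then settle that question with the indices $l(\cdot)$, $u(\cdot)$. Your treatment of the vector-valued items --- absorbing the range into an extra factor $Z=\ell_{q'}$ or $Z=l_{M_{N+1}^*}$ and translating back with $l(\ell_{q'})=q'$ and the Boyd duality $1/\alpha_{M^*}=1-1/\beta_M$ --- is only cosmetically different from what the paper does, namely applying \cite[Lemma]{DimZal} directly in the vector-valued form $\sum_i 1/l(X_i)<1/u(X_{N+1})$; the two routes are equivalent precisely because Theorem~\ref{multilineartheoremA} already identifies $\mathcal{L}(X_1\times\cdots\times X_N)$ with $\mathcal{L}(X_1\times\cdots\times X_{N-1},X_N^*)$.

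The genuine gap is in the ``only if'' halves of (iii)--(vi). You assert that when $\sum_i 1/l(X_i)\geq 1$ a non-weakly-sequentially-continuous form ``is supplied by the index analysis of \cite{Gon}.'' It is not: \cite{Gon} computes $l(h_M)=\alpha_M$ and $u(h_M)=\beta_M$, but it contains no such construction, and the implication ``$\sum_i 1/l(X_i)\geq 1$ implies $\mathcal{L}\neq\mathcal{L}_{wsc}$'' is not a formal consequence of the definition of the indices (these are suprema/infima of exponents and need not be attained, so failure of the index inequality does not by itself hand you a bad form). What actually carries the load in the paper is structural: by Lindenstrauss--Tzafriri, $\ell_{\alpha_{M_i}}$ is isomorphic to a quotient of $l_{M_i}$ and $\ell_{\beta_{M_{N+1}}}$ to a subspace of $l_{M_{N+1}}$ (see \cite{LTIII}), and $\ell_{p_i}$ is a quotient of $d(w_i,p_i)$ (see \cite[Proposition~4]{LTII}); one then runs the multilinear version of Remark~\ref{remark not wsc}: lift the canonical basis through the quotient maps, use Rosenthal's theorem to extract weakly null difference sequences, and compose the diagonal form (bounded by H\"older, since the exponents sum to at least the required threshold) with the quotient maps to get a bounded form that is not weakly sequentially continuous --- or, alternatively, note that this composition exhibits the non-reflexive space $\mathcal{L}(\ell_{p_1}\times\cdots\times\ell_{p_N},\ell_q)$ as a subspace of $\mathcal{L}(l_{M_1}\times\cdots\times l_{M_N},l_{M_{N+1}})$, which kills reflexivity. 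Without this ingredient your necessity direction for the Orlicz and Lorentz items is unproven. On the credit side, you correctly flag that uniform convexity of the distinguished factor in items (iii) and (v) is an unverified hypothesis of Theorem~\ref{multilineartheoremA} under the stated assumptions --- a point the paper itself passes over silently with ``the spaces considered satisfy the hypotheses.''
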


As we did below the statement of Theorem~\ref{theoremA}, we make now some observations regarding the proof of the equivalence in Theorem~\ref{multilineartheoremA}. The equivalence (a)$\Leftrightarrow$(b) and the implication (b)$\Rightarrow$(c) are, as in the polynomial case, immediate.
Note that, in contrast with Theorem~\ref{theoremA}, we require that $X_1, \dots, X_N$ have Schauder bases, which is stronger than the compact approximation property hypothesis. The reason is that, the equivalence (c)$\Leftrightarrow$(d) is proved in \linebreak \cite[Theorem~1 and Corollary~2]{DimZal} under this stronger assumption. Hence, we only need to prove implications (d)$\Rightarrow$(e)$\Rightarrow$(a), which we leave for the next section.


\vspace{0.2cm}

We focus now on the (uniform) strong subdifferentiability of tensor products. As we related, in Theorems~\ref{theoremA} and \ref{multilineartheoremA}, the strong subdifferentiability of spaces of polynomials and multilinear mappings with property $\Loo$, we will derive some differentiability properties of symmetric (respectively, full) projective tensor products from the \emph{polynomial} (respectively, \emph{multilinear}) $\Lpp$. We consider the following subsets of $X_1 \pten \cdots \pten X_N$ and $\sten X$, respectively, 
\begin{equation*}
U:= \Big\{ x_1 \otimes \cdots \otimes x_N: \|x_1\|=\cdots=\|x_N\|=1 \Big\}\subseteq S_{X_1 \pten \cdots \pten X_N}
\end{equation*}
and
\begin{equation*}
U_s := \Big\{ \otimes^N x: \|x\| = 1 \Big\} \subseteq S_{\sten X}, 
\end{equation*}
and we invoke Definitions~\ref{definition:SSD} and \ref{definition:uniform-SSD} on the (uniformly) SSD at a subset of the unit sphere. 

\begin{mainth} \label{theoremC} In the symmetric projective tensor setting, the following results hold true.
\begin{enumerate}
\itemsep0.3em 
	 \item[(i)] $\sten \ell_2$ is USSD on $U_s$ for $N \in \mathbb{N}$. 
	 \item[(ii)] ${\ensuremath{c_0 \widehat{\otimes}_{\pi_s}}} c_0$ is SSD on $U_s$ (in the complex case).
\end{enumerate}
In the (full, not symmetric) projective tensor setting, we have the following.
\begin{enumerate}
\itemsep0.3em 
\item[(iii)] $\ell_2\pten \stackrel{N}{\cdots} \pten \ell_2$ is USSD on $U$ for $N \in \mathbb{N}$.
\item[(iv)] $c_0\pten c_0$ is SSD on $U$ (in the complex case).
\item[(v)] $\ell_1^N \pten Y$ is SSD if and only if $Y$ is SSD. 
\end{enumerate}
\end{mainth}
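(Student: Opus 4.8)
The plan is to handle all five items through the Franchetti--Pay\'a characterization (Theorem~\ref{thm:FP}) applied directly to the tensor space, using the dualities $(\sten X)^* = \mathcal{P}(^N X)$ and $(X_1\pten\cdots\pten X_N)^* = \mathcal{L}(X_1\times\cdots\times X_N)$. For a rank-one tensor $z = \otimes^N x \in U_s$ (resp. $z = x_1\otimes\cdots\otimes x_N \in U$) the pairing with a dual functional is just evaluation, so $D(z) = \{P\in S_{\mathcal{P}(^N X)}: P(x)=1\}$ (resp. $\{A : \|A\|=1,\ A(x_1,\dots,x_N)=1\}$). Hence, by Theorem~\ref{thm:FP}, proving (uniform) SSD at $z$ is equivalent to the quantitative estimate: for every $\e>0$ there is $\delta>0$ (independent of $z$ for the uniform statements (i), (iii)) so that every $P\in B_{\mathcal{P}(^N X)}$ with $\re P(x)>1-\delta$ lies within $\e$ of $D(z)$ in the polynomial norm. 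This is a localized, dual Bishop--Phelps--Bollob\'as estimate for functionals, and it is what I would establish in each case.

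For the Hilbert items (i) and (iii) I would first remove the uniformity over $U_s$ (resp. $U$) by exploiting the transitivity of the isometry group of $\ell_2$: given unit vectors $x,x'$ there is a surjective linear isometry $V$ of $\ell_2$ with $Vx=x'$, and $V$ (resp. $V_1\otimes\cdots\otimes V_N$) induces a surjective isometry of $\sten\ell_2$ (resp. $\ell_2\pten\cdots\pten\ell_2$) carrying $z$ to $z'$ and $D(z)$ onto $D(z')$. Since the one-sided limit is isometry-invariant, the inner supremum in \eqref{eq:SSD3} is the same for all $z$, so USSD on $U_s$ (resp. $U$) collapses to SSD at the single tensor $\otimes^N e_1$ (resp. $e_1\otimes\cdots\otimes e_1$). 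It then remains to produce, for $P$ with $\re P(e_1)$ close to $1$, a nearby polynomial attaining its norm at $e_1$. Writing $\ell_2 = \K e_1 \oplus e_1^\perp$ and expanding $P$ into its components along $e_1$, I would correct $P$ by (a) restoring the value $P(e_1)$ to $1$ using a multiple of $\langle \cdot, e_1\rangle^N$ and (b) cancelling the tangential first-order term $\check{P}(e_1^{N-1},\cdot)$ of its associated symmetric $N$-linear form $\check P$; near-maximality of $e_1$ forces both corrections to have norm $O(\sqrt{\,1-\re P(e_1)\,})$. The resulting $Q$ satisfies $Q(e_1)=1$ and has $e_1$ as a critical point.

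The hard part is then the norm bound $\|Q\|\le 1$, which I expect to be the main technical obstacle. The \emph{middle} transverse coefficients $\check P(e_1^{\,k},w^{N-k})$ with $2\le N-k\le N-1$ are not controlled by the first-order behaviour at $e_1$ (they can be of order $1$ even for polynomials that already attain at $e_1$, as $y\mapsto y_1\|y\|^2$ shows for $N=3$), so they cannot simply be discarded; instead one must show that the global constraint $\|P\|\le 1$ together with $\re P(e_1)>1-\delta$ keeps the corrected polynomial inside the unit ball up to an $o(1)$ error that can be absorbed. This is where the power-type modulus of uniform convexity of $\ell_2$ should enter quantitatively. The bilinear case $N=2$ is clean and instructive: identifying $\mathcal{L}(\ell_2\times\ell_2)$ with operators, the correction is a genuine block diagonalization $\operatorname{diag}(1,T')$ whose norm is automatically $\le 1$, and I expect the general $N$ argument to be an averaged version of this.

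For the $c_0$ items (ii) and (iv), where only a pointwise $\delta=\delta(z)$ is required and the complex scalars are essential, I would use that a unit vector $x\in c_0$ attains its supremum on the \emph{finite} set $F_x=\{i:|x_i|=1\}$ and is bounded away from $1$ off a finite set. Given a bilinear form (resp. $2$-homogeneous polynomial) $A$ with $\re A(x_1,x_2)>1-\delta$, this finiteness localizes the analysis to the finite coordinate blocks $F_{x_1}\times F_{x_2}$, reducing the restoration of attainment to a finite-dimensional rank-one correction supported there; the complex hypothesis would be used to average the correction over phases (integration over the torus $\mathbb{T}$) so as not to increase the norm, a step with no real analogue. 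The dependence of $\delta$ on the size of $F_{x_i}$ and on the gap to the subcritical coordinates is exactly why the statement is only pointwise. Finally, for (v) I would use the isometric identification $\ell_1^N\pten Y = Y\oplus_1\cdots\oplus_1 Y$ ($N$ copies): for $\ell_1$-sums the one-sided limit in \eqref{eq:SSD0} splits as a finite sum over coordinates, so the norm is SSD at a point if and only if each coordinate norm is SSD at the corresponding normalized component, and since the sum is finite this is equivalent to $Y$ being SSD, yielding both implications at once.
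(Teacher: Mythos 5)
Your overall frame---reducing (uniform) SSD on the sets of elementary tensors to Bishop--Phelps--Bollob\'as point-type statements for polynomials and multilinear forms via the Franchetti--Pay\'a theorem and the dualities $(\sten X)^*=\mathcal{P}(^NX)$, $(X_1\pten\cdots\pten X_N)^*=\mathcal{L}(X_1\times\cdots\times X_N)$---is exactly the paper's own first step (its Propositions~\ref{thm BPBpp for a set is unif SSD}, \ref{thm BPBpp for bilinear and polynomial} and \ref{local}); your transitivity reduction of (i) and (iii) to the single point $\otimes^N e_1$ is valid, and your treatment of (v) agrees with the paper's, which cites \cite[Proposition~2.2]{FP} for the $\ell_1$-sum fact. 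The genuine gap is that for items (i)--(iv) you never prove the resulting one-point statements, and these are the entire substance of the theorem. For (i) and (iii) you must produce, for every $P\in B_{\mathcal{P}(^N\ell_2)}$ with $\re P(e_1)>1-\delta$, a polynomial $Q$ with $\|Q\|=Q(e_1)=1$ and $\|P-Q\|<\e$. Your construction (restore the value at $e_1$, cancel the tangential derivative) yields $Q(e_1)=1$ with $e_1$ a critical point, but---as you concede---you cannot show $\|Q\|\leq 1$, and this cannot be treated as an ``absorbable error'': since $\|Q\|\geq |Q(e_1)|=1$ automatically, if $\|Q\|>1$ then neither $Q$ nor $Q/\|Q\|$ lies in $D(\otimes^N e_1)$. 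Your own example $y\mapsto y_1\|y\|^2$ shows the middle coefficients $\check{P}(e_1^k,w^{N-k})$ are genuinely of order one, so the obstruction is real; overcoming it is precisely the content of the Bishop--Phelps--Bollob\'as theorem for polynomials on uniformly convex domains, a nontrivial theorem you would be reproving from scratch.

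The paper closes this step by citation plus a different use of the isometry group: it quotes \cite[Theorem~2.2]{ABGM} for bilinear forms and \cite[Theorem~3.1]{Acoet6} for polynomials on uniformly convex spaces, which produce a nearby $\widetilde{Q}$ attaining its norm at some point $\tilde{x}_0$ close to $e_1$, and then uses \emph{micro}-transitivity of the Hilbert norm \cite{CDKKLM}: there is a surjective isometry $T$ with $T(e_1)=\tilde{x}_0$ and $\|T-\id\|$ small, so $Q=\widetilde{Q}\circ T$ attains its norm at $e_1$ and is still close to $P$ (this is Theorem~\ref{theorem:micro}). Plain transitivity, which is all you invoke, cannot perform this last step, because the transporting isometry must be close to the identity. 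Likewise for (ii), the heart of the paper's proof is the localization estimate $\|P-P\circ P_A\|<\e$ whenever $\|P(x_0)\|>1-\eta$, where $A=\{i:|x_0(i)|=1\}$ (Proposition~\ref{prop:ABC}), established by a delicate complex-analytic argument (maximum modulus principle, phase choices, a L'H\^opital computation), followed by a compactness argument on the finite block $\ell_\infty^A$ (Theorem~\ref{c0-result}); item (iv) is the cited bilinear result of \cite{ChoKim}. Your $c_0$ sketch names the right two ingredients (finite peak set, complex phase averaging) but proves neither. In short: the proposal is a correct reduction together with an accurate diagnosis of where the difficulty sits, but the key quantitative estimates behind (i)--(iv) are missing, so it does not constitute a proof.
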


\section{On the strong subdifferentiability of $\mathcal{P}(^N X, Y^*)$ and $\mathcal{L}(X_1\times\cdots\times X_N, Y^*)$} \label{Section:TheoremAandB}

In this section we prove Theorems~\ref{theoremA} and \ref{multilineartheoremA} and their respective corollaries. In Subsection~\ref{strongly exposition}, we make a deep analysis of the existing relation between the $N$-homogeneous polynomial ${\bf L}_{o,o}$ and strong subdifferentiability. We show that, under the assumption of $X$ being uniformly convex (we do not require the CAP nor the sequential Kadec-Klee property), the $N$-ho\-mo\-ge\-neous polynomial ${\bf L}_{o,o}$ is equivalent to the strongly exposition of the closed convex hull of elementary tensor products of norming points (a property which is formally stronger than strong subdifferentiability of the norm of $\mathcal{P}(^NX, Y^*)$). Finally, in Subsection~\ref{diagram} we make a diagram showing the implications between all the properties appearing in the previous sections, and the hypotheses needed in each implication.

\subsection{Proofs of Theorem \ref{theoremA} and Theorem \ref{multilineartheoremA}} We focus first in the proofs of Theorems~\ref{theoremA} and \ref{multilineartheoremA}. As we already mentioned in the previous section, we only need to prove implications (d)$\Rightarrow$(e)$\Rightarrow$(a) on both theorems. 

\begin{proof}[Proof of Theorem~\ref{theoremA}]
We begin with (d)$\Rightarrow$(e). We are going to prove that if $X$ is reflexive and has the sequential Kadec-Klee property, then the pair $(X, Y^*)$ has the \emph{$N$-homogeneous polynomial $\Loo$ for weakly sequentially continuous polynomials}, that is, given $\e > 0$ and \linebreak$P \in \mathcal{P}_{wsc}(^NX, Y^*)$ with $\|P\| = 1$, there exists $\eta(\e, P) > 0$ such that whenever $x \in S_X$ satisfies $\|P(x)\| > 1 - \eta(\e, P)$, there exists $x_0 \in S_X$ such that 
		$$
		\|P(x_0)\| = 1\quad \text{and} \quad \|x_0 - x\| < \e.
		$$
This, together with the hypothesis in (d), gives (e). We argue by contradiction. Suppose that there are $\e_0 > 0$, $P_0 \in \mathcal{P}_{wsc}(^N X, Y^*)$ with $\|P_0\| = 1$, and $(x_n)_{n \in \N}$ such that 
\begin{equation} \label{ineq1} 
1 \geq \|P_0(x_n)\| \geq 1 - \frac{1}{n} \ \ \ \mbox{and} \ \ \ \dist(x_n, \NA(P_0)) \geq \e_0 > 0.
\end{equation}
Since $X$ is reflexive, we may (and we do) assume that there exists $x_0 \in B_X$ such that $x_n \stackrel{w}{\longrightarrow} x_0$. Given that $P_0$ is weakly sequentially continuous, we have $P_0 (x_n) \stackrel{\|\cdot\|}{\longrightarrow} P_0(x_0)$. By using (\ref{ineq1}), we get that $\|P_0(x_0)\| = 1$ and, therefore, $x_0 \in S_X$. Now, $x_n \stackrel{w}{\longrightarrow} x_0$ and $\|x_n\|\rightarrow \|x_0\|$ and then, as $X$ has the sequential Kadec-Klee property, $(x_n)_{n=1}^{\infty}$ converges to $x_0$ in norm, which is a contradiction because $x_0 \in \NA(P)$.

Now we focus on the implication (e)$\Rightarrow$(a). Given a norm-one polynomial $P\in \mathcal{P}(^NX, Y^*)$ and $\varepsilon>0$, we want to find $\delta >0$ such that
\begin{equation*}
\frac{ \Vert P + tQ\Vert -1}{t}- \tau(P,Q) < \varepsilon 
\end{equation*} 
for every $0<t<\delta$ and every $Q\in \mathcal{P}(^N X, Y^*)$ with $\Vert Q\Vert =1$ (see \eqref{eq:SSD} for the definition of $\tau(P,Q)$). Since $Y$ is uniformly convex, in view of the characterization of uniform convexity given in \cite[Theorem~2.1]{KL}, we can take $0<\tilde{\eta}(\varepsilon)<\varepsilon$ such that, if $(y^*, y_0)\in S_{Y^*}\times S_{Y}$ satisfy $|y^*(y_0)|>1-\tilde{\eta}(\varepsilon)$, then there exist $y_1\in S_Y$ such that $|y^*(y_1)|=1$ and $\|y_1-y_0\|<\varepsilon$. 
We will see that $$\delta=\frac{\eta\left(2^{-1}\tilde{\eta}\left(\frac{\e}{2}\right), P\right)}{2}$$ works for our purposes, where $\eta(\varepsilon, P)>0$ is the one in the hypothesis (e). Observe that, without loss of generality, we may assume that $0<\eta(\e,P)<\e$. In particular, $\delta < 2^{-2}\tilde{\eta}\left(\frac{\e}{2}\right)$. For any $Q$ and $0<t<\delta$ fixed, take $x_t\in S_X$ such that $\|(P+tQ)(x_t)\|=\Vert P+tQ\Vert$. Such an $x_t$ exists because the hypothesis in (e) implies that every polynomial attains its norm. Then, we have
\begin{eqnarray*}
\|P(x_t)\| &=& \|(P+tQ-tQ)(x_t)\| = \|(P+tQ)(x_t) - tQ(x_t)\|\\
&\geq& \Vert P + tQ\Vert -t \geq 1- t- t > 1-2\delta > 1- \eta\left(2^{-1}\tilde{\eta}\left(\frac{\e}{2}\right), P\right),
\end{eqnarray*}
and, by hypothesis, there is $z\in S_X$ such that 
$$
\|P(z)\|=1 \quad \text{and}\quad \Vert x_t- z\Vert < 2^{-1}\tilde{\eta}\left(\frac{\e}{2}\right).
$$
Since $(P+tQ)(x_t)\in Y^*$ is norm attaining, we can consider $y_t\in S_Y$ such that 
$$
(P+tQ)(x_t)(y_t)=\|(P+tQ)(x_t)\|=\|P+tQ\|.
$$
Then,
\begin{eqnarray}
\frac{\Vert P + tQ\Vert -1}{t}- \tau(P,Q) 
&=& \frac{\operatorname{Re}[(P+tQ)(x_t)(y_t)]- 1}{t}-\tau(P,Q)\nonumber \\
&\leq& \frac{\operatorname{Re}[(P+tQ)(x_t)(y_t)]- \operatorname{Re}[P(x_t)(y_t)]}{t}-\tau(P,Q)\nonumber \\
&=& \operatorname{Re}[Q(x_t)(y_t)]-\tau(P,Q).\label{tau}
\end{eqnarray}
Now, from the inequalities
\begin{eqnarray*}
\operatorname{Re}[P(x_t)(y_t)] &=& \operatorname{Re} [(P+tQ-tQ)(x_t)(y_t)] = \operatorname{Re} [(P+tQ)(x_t)(y_t)] - \operatorname{Re} [tQ(x_t)(y_t)]\\
&\geq& \Vert P + tQ\Vert -t \geq 1- 2t> 1-2\delta> 1-2^{-1}\tilde{\eta}\left(\frac{\varepsilon}{2}\right)
\end{eqnarray*}
and
\begin{eqnarray*}
\left| \operatorname{Re}[P(z)(y_t)] - \operatorname{Re}[P(x_t)(y_t)]\right|\leq \|P(z)-P(x_t)\|\leq \|z-x_t\|\leq 2^{-1}\tilde{\eta}\left(\frac{\e}{2}\right),
\end{eqnarray*}
we deduce that
$$
|P(z)(y_t)|\geq \operatorname{Re}[P(z)(y_t)]> \operatorname{Re}[P(x_t)(y_t)] - 2^{-1}\tilde{\eta}\left(\frac{\e}{2}\right) > 1-\tilde{\eta}\left(\frac{\e}{2}\right).
$$
Then, there exist $y\in S_Y$ such that $P(z)(y)=\|P(z)\|=1$ and $\|y-y_t\|<\e/2$. Finally, since
$$
\tau(P,Q)\geq \operatorname{Re}[Q(z)(y)],
$$
going back to \eqref{tau} we see that
\begin{eqnarray*}
\frac{\Vert P + tQ\Vert -1}{t}- \tau(P,Q) &\leq& \operatorname{Re}[Q(x_t)(y_t)] - \operatorname{Re}[Q(z)(y)]\\
&\leq& \operatorname{Re}[Q(x_t)(y_t)] - \operatorname{Re}[Q(z)(y_t)] + \operatorname{Re}[Q(z)(y_t)] - \operatorname{Re}[Q(z)(y)]\\
&\leq& \|x_t-z\| + \|y_t-y\| < \frac{\e}{2}+\frac{\e}{2}=\e
\end{eqnarray*}
whenever $0<t<\delta$, which is the desired statement.
\end{proof}

\begin{proof}[Proof of Theorem~\ref{multilineartheoremA}]
Let us begin with the implication (d)$\Rightarrow$(e). Arguing by contradiction, exactly as in the proof of Theorem~\ref{theoremA}, we get that the pair $(X_1\times \cdots\times X_{N-1}, X_N^*)$ has the \emph{$\Loo$ for weakly sequentially continuous mappings}, that is, the statement in Definition~\ref{def Lpp and Loo} (ii) holds for every $A\in \mathcal{L}_{wsc}(X_1\times \cdots\times X_{N-1}, X_N^*)$. Since, by hypothesis we know that \linebreak$\mathcal{L}(X_1\times \cdots\times X_{N-1}, X_N^*)=\mathcal{L}_{wsc}(X_1\times \cdots\times X_{N-1}, X_N^*)$, we deduce that the pair $(X_1\times \cdots\times X_{N-1}, X_N^*)$ has the $\Loo$. Now, let us show that this implies that the pair $(X_1\times \cdots\times X_{N}, \K)$ has the $\Loo$. Take $\e>0$ and a norm-one $N$-linear form $A\in \mathcal{L}(X_1\times \cdots\times X_{N})$, and let $\eta(\cdot,\tilde{A})>0$ be the one in the definition of property $\Loo$ for the pair $(X_1\times \cdots\times X_{N-1}, X_N^*)$, where \linebreak $\tilde{A}(x_1, \dots, x_{N-1})(x_N)=A(x_1, \dots, x_{N})$. As $X_N$ is uniformly convex, there exist $0<\tilde{\eta}(\e)<\e$ such that if $(x_N^*, x_N) \in S_{X_N^*}\times S_{X_N}$ satisfy $|x_N^*(x_N)|>1-\tilde{\eta}(\e)$, then there exist $x_N^1\in S_{X_N}$ such that $|x_N^*(x_N^1)|=1$ and $\|x_N^1-x_N\|<\e$. Suppose that
$$
|A(x_1,\dots, x_N)|>1-\frac{1}{2}\eta\left(\frac{\tilde{\eta}(\e)}{2N}, \tilde{A}\right).
$$
On the one hand, we have 
$$
\|\tilde{A}(x_1,\dots, x_{N-1})\|>1-\eta\left(\frac{\tilde{\eta}(\e)}{2N}, \tilde{A}\right)
$$
and, by the $\Loo$ property for the pair $(X_1\times \cdots\times X_{N-1}, X_N^*)$, there exist \linebreak $(x_1^1,\dots, x_{N-1}^1)\in S_{X_1}\times\cdots\times S_{X_{N-1}}$ such that
$$
\|\tilde{A}(x_1^1,\dots, x_{N-1}^1)\|=1 \quad \text{and}\quad \|x_i-x_i^1\|< \frac{\tilde{\eta}(\e)}{2N}, \quad i=1,\dots, N-1.
$$
As a consequence, 
\begin{eqnarray*}
\left| |\tilde{A}(x_1^1,\dots, x_{N-1}^1)(x_N)| - |\tilde{A}(x_1,\dots, x_{N-1})(x_N)| \right| &\leq& |A(x_1^1,\dots, x_{N-1}^1, x_N) - A(x_1,\dots, x_{N-1}, x_N)|\\
&\leq& \|x_{N-1}^1-x_{N-1}\|+\cdots+\|x_1^1-x_1\|\\
&\leq& \frac{\tilde{\eta}(\e)}{2}
\end{eqnarray*}
and, hence,
$$
|\tilde{A}(x_1^1,\dots, x_{N-1}^1)(x_N)|>|\tilde{A}(x_1,\dots, x_{N-1})(x_N)| - \frac{\tilde{\eta}(\e)}{2} > 1 - \tilde{\eta}(\e).
$$
Then, there exist $x_N^1\in S_{X_N}$ such that
$$
|\tilde{A}(x_1^1,\dots, x_{N-1}^1)(x_N^1)|=1 \quad \text{and} \quad \|x_N-x_N^1\|<\e.
$$
In sum,
$$
|A(x_1^1,\dots, x_{N-1}^1, x_N^1)|=1 \quad \text{and} \quad \|x_i-x_i^1\|<\e \quad i=1,\dots, N,
$$
as desired.

Now we prove (e)$\Rightarrow$(a). Given $\e > 0$ and $A \in \mathcal{L}(X_1 \times \cdots \times X_N)$, we will find $\delta >0$ such that
\begin{equation*}
\frac{ \Vert A + tL\Vert -1}{t}- \tau(A,L) < N\varepsilon 
\end{equation*} 
for every $0<t<\delta$ and every $L\in \mathcal{L}(X_1 \times \cdots \times X_N)$ with $\Vert L\Vert =1$ (recall (\ref{eq:SSD}) and (\ref{eq:SSD1})). We will see that $\delta :=\frac{\eta(\e, A)}{2} > 0$ does the job, where $\eta(\varepsilon, A)>0$ is the one in the definition of property $\Loo$ (see Definition~\ref{def Lpp and Loo} (ii)). For any $L$ and $0<t<\delta$ fixed, take $\mathbf{x}_t=(x_1^t,\ldots, x_N^t) \in S_{X_1}\times \cdots \times S_{X_N}$ such that $(A+tL)(\mathbf{x}_t)=\Vert A+tL\Vert$. Then, we have
\begin{eqnarray*}
\operatorname{Re} A(\mathbf{x}_t) &=& \operatorname{Re} [(A+tL-tL)(\mathbf{x}_t)] = \operatorname{Re} [(A+tL)(\mathbf{x}_t)] - \operatorname{Re} [tL(\mathbf{x}_t)]\\
&\geq& \Vert A + tL\Vert -t \geq 1- t- t > 1-\eta(\varepsilon, A),
\end{eqnarray*}
and, by hypothesis, there exists $\mathbf{z}_t=(z_1^t,\ldots, z_N^t) \in S_{X_1}\times \cdots \times S_{X_N}$ with $\Vert x_i^t- z_i^t\Vert < \varepsilon$ and $A(\mathbf{z}_t)=1$ That is, the linear functional defined as the evaluation in $\mathbf{z}_t$ belongs to the set $D(A)$ of support functionals at $A$. Then,
\begin{eqnarray*}
\frac{\Vert A + tL\Vert -1}{t}- \tau(A,L) 
&=& \frac{\operatorname{Re}[(A+tL)(\mathbf{x}_t)]- 1}{t}-\tau(A,L)\nonumber \\
&\leq& \frac{\operatorname{Re}[(A+tL)(\mathbf{x}_t)]- \operatorname{Re}[A(\mathbf{x}_t)]}{t}-\tau(A,L)\nonumber \\
&=& \operatorname{Re}[L(\mathbf{x}_t)]-\tau(A,L)\nonumber \\
&\leq& \operatorname{Re}[L(\mathbf{x}_t)] - \operatorname{Re} [L(\mathbf{z}_t)] \\
&<& N\varepsilon, \
\end{eqnarray*}
which proves the desired statement.
\end{proof}

Now, we move towards the proof of Corollary~\ref{corollaryA}. In view of equivalence (a)$\Leftrightarrow$(d) in Theorem~\ref{theoremA}, and taking into account that all the spaces considered in Corollary~\ref{corollaryA} satisfy the hypotheses of the theorem (see Subsection~\ref{Orlicz Lorentz}, where we briefly listed some known properties of Orlicz and Lorentz sequence spaces), we only need to check that the space of \linebreak $N$-homogeneous polynomials coincide with the space of weakly sequentially continuous \linebreak $N$-homogeneous polynomials. The following remark will be useful in the proof of the corollary.

\begin{remark}\label{remark not wsc}
In \cite[Remark~3]{JarPriZal} the authors show that if $X$ has a quotient isomorphic to $\ell_p$ and $N\geq p$, then $\mathcal{P}(^N X)\neq \mathcal{P}_{wsc}(^N X)$. Following the same ideas we can see that, if $\ell_p$ is isomorphic to a quotient of $X$, $\ell_q$ is isomorphic to a subspace of $Y$ and $Nq\geq p$, then $\mathcal{P}(^N X, Y)\neq \mathcal{P}_{wsc}(^N X, Y)$. Indeed, let $\pi\colon X\to \ell_p$ be a quotient map and take a bounded sequence $(x_n)_n$ in $X$ such that $\pi(x_n)=e_n$, where $\{e_n\}$ is the canonical basis of $\ell_p$. Since $X$ does not contain a copy of $\ell_1$, by Rosenthal's theorem we know that $(x_n)_n$ admits a weakly Cauchy subsequence $(x_{n_j})_j$. Consider the weakly null sequence in $X$ given by $y_j=x_{n_{2j}}-x_{n_{2j+1}}$ and the polynomial $Q\in \mathcal{P}(^N\ell_p, \ell_q)$ defined by
$$
Q(a_1, a_2, \dots, a_j, \dots)=(a_{n_2}^N, a_{n_4}^N, \dots, a_{n_{2j}}^N, \dots)
$$
(here we use the fact that $Nq\geq p$). Finally, let $i: \ell_q \to Y$ be an isomorphism onto its image and consider $P=i\circ Q \circ \pi \in \mathcal{P}(^N X, Y)$. Noting that $\|P(y_j)\|=\|i(Q(e_{n_{2j}}-e_{n_{2j+1}}))\|\not\to 0$, we conclude that $P$ is not weakly sequentially continuous.
\end{remark}
 
\begin{proof}[Proof of Corollary~\ref{corollaryA}]
For $\ell_p$-spaces it is known that $\mathcal{P}(^N\ell_p)=\mathcal{P}_{wsc}(^N\ell_p)$ if and only if $N<p$ and that $\mathcal{P}(^N\ell_p,\ell_q)=\mathcal{P}_{wsc}(^N\ell_p,\ell_q)$ if and only if $Nq<p$ (see for example \cite[Chapter~2.4]{Din}). This gives items (i) and (ii). 

Let us prove items (iii) and (iv). On the one hand, by 
\cite[Theorem~2.5 and Corollary~2.6]{GJ} we have that $\mathcal{P}(^N l_{M_1})=\mathcal{P}_{wsc}(^N l_{M_1})$ if $N<l(l_{M_1})=\alpha_{M_1}$ and that $\mathcal{P}(^N l_{M_1}, l_{M_2})=\mathcal{P}_{wsc}(^N l_{M_1}, l_{M_2})$ if $N \beta_{M_2}=N u(l_{M_2})< l(l_{M_1})=\alpha_{M_1}$. This gives the "if" implication in items (iii) and (iv). On the other hand, suppose that $N\geq \alpha_{M_1}$. Putting $p=\alpha_{M_1}$ we have that $\ell_p$ is isomorphic to a quotient space of $l_{M_1}$ (see \cite[Theorem~1 and Corollary~1]{LTIII}). Hence, from Remark~\ref{remark not wsc}, we deduce that $\mathcal{P}(^N l_{M_1})\neq \mathcal{P}_{wsc}(^N l_{M_1})$ and, consequently, $\mathcal{P}(^N l_{M_1})$ is not SSD. In the vector-valued case, suppose that $N \beta_{M_2}\geq \alpha_{M_1}$ and put $p=\alpha_{M_1}$ and $q=\beta_{M_2}$. Again by \cite[Theorem~1 and Corollary~1]{LTIII} we have that $\ell_p$ is isomorphic to a quotient space of $l_{M_1}$ and $\ell_q$ is isomorphic to a subspace of $l_{M_2}$. Then, in virtue of Remark~\ref{remark not wsc} we have $\mathcal{P}(^N l_{M_1}, l_{M_2})\neq \mathcal{P}_{wsc}(^N l_{M_1}, l_{M_2})$, which is the desired statement.
  
Finally, we sketch the proof of item (v) and (vi). If $N<l(d(w,p))=p$ then \linebreak $\mathcal{P}(^N d(w,p))=\mathcal{P}_{wsc}(^N d(w,p))$ in virtue of the cited results in \cite{GJ}. When $N\geq p$, given that $d(w,p)$ has a quotient isomorphic to $\ell_p$ (see \cite[Proposition~4]{LTII}), by Remark~\ref{remark not wsc} we have that $\mathcal{P}(^N d(w,p))\neq\mathcal{P}_{wsc}(^N d(w,p))$. The proof of (vi) is analogous to that of (iv).

Alternatively, for the only if parts, one could use that under the hypothesis of Remark~\ref{remark not wsc}, $\mathcal{P}(^N \ell_p, \ell_q)$ is a subspace of $\mathcal{P}(^N X, Y)$. Thus, if $Nq\geq p$, this space can not be reflexive.
\end{proof}

\begin{proof}[Proof of Corollary~\ref{corollaryB}]
As in the proof of Corollary~\ref{corollaryA} we only need to check that, in each case, the space of $N$-linear mappings coincide with the space of weakly sequentially continuous $N$-linear mappings (or, equivalently, that the space of $N$-linear mappings is reflexive). Note that, as in Corollary~\ref{corollaryA}, the spaces considered satisfy the hypotheses of Theorem~\ref{multilineartheoremA}. Items (i) and (ii) follow from the fact that $\mathcal{L}(\ell_{p_1}\times \cdots\times \ell_{p_N}, \ell_q)=\mathcal{L}_{wsc}(\ell_{p_1}\times \cdots\times \ell_{p_N}, \ell_q)$ if and only if $\frac{1}{p_1}+\cdots+\frac{1}{p_N}<\frac{1}{q}$ (see \cite[Chapter~2.4]{Din}). The "if" part of items (iii) and (iv) follow from \linebreak \cite[Lemma]{DimZal}, where it is proved that if
$$
\frac{1}{l(X_1)}+\cdots+\frac{1}{l(X_N)}<\frac{1}{u(X_{N+1})},
$$
then every $N$-linear mapping in $\mathcal{L}(X_1\times \cdots\times X_N, X_{N+1})$ is weakly sequentially continuous. 

The "only if" implication follows applying a \emph{multilinear} version of Remark~\ref{remark not wsc}. Specifically, it can be proved that if $\ell_{p_i}$, $i=1, \dots, N$, is isomorphic to a quotient of $X_i$, $\ell_q$ is isomorphic to a subspace of $X_{N+1}$ and 
$$
\frac{1}{p_1}+\cdots+\frac{1}{p_N}\geq\frac{1}{q},
$$
then $\mathcal{L}(X_1\times \cdots\times X_N, X_{N+1})\neq \mathcal{L}_{wsc}(X_1\times \cdots\times X_N, X_{N+1})$. Or, as before, one could see that $\mathcal{L}(\ell_{p_1}\times \cdots\times \ell_{p_N}, \ell_{q})$ is a subspace of $\mathcal{L}(X_1\times \cdots\times X_N, X_{N+1})$. The details are left to the reader.
Finally, items (v) and (vi) follow applying the same arguments.
\end{proof}

\subsection{Strongly exposition of polynomials}\label{strongly exposition}
Before carrying out a deeper analysis of the existing relation between the $\NLoo$ and strong subdifferentiability, let us set some definitions needed for this subsection. For an $N$-homogeneous polynomial $P \in \mathcal{P}(^N X, Y^*)$ with $\|P\| = 1$, we define the set
\begin{equation} \label{set-C(P)}
C(P):= \overline{\co} \Big\{ (\otimes^N x) \otimes y : x \in S_X, y \in S_Y, \text{ and } P(x)(y) = 1 \Big\},
\end{equation}
which clearly satisfies $C(P)\subseteq D(P)=\{\varphi\in S_{\mathcal{P}(^N X, Y^*)^*}: \varphi(P)=1\}$. Note that when $N=1$, $X$ is reflexive and $Y= \mathbb{K}$ we have that $C(P) = D(P)$, but the equality does not hold in general. 
As we already mentioned in Subsection~\ref{section SSD}, the norm of $\mathcal{P}(^N X, Y^*)$ is SSD at $P$ if and only if $P$ strongly exposes $D(P)$.
Our aim in this section, is to establish some relations between the $N$-homogeneous polynomial $\Loo$, strong exposition of the set $C(P)$ and strong subdifferentiability of the norm of $\mathcal{P}(^N X, Y^*)$. As a byproduct, we obtain a result on denseness of norm attaining symmetric tensor products, in the same line of \cite{DGJR}.
We begin noting that, since $C(P)\subseteq D(P)$, if $P$ strongly exposes $C(P)$, then it strongly exposes $D(P)$.

\begin{remark}\label{remark strongly C(P)} Let $X, Y$ be Banach spaces and $P\in S_{\mathcal{P}(^N X, Y^*)}$. If $P$ strongly exposes $C(P)$, then the norm of $\mathcal{P}(^N X, Y^*)$ is strongly subdifferentiable at $P$. In particular, if $P$ strongly exposes $C(P)$ for every $P\in S_{\mathcal{P}(^N X, Y^*)}$, then the norm of $\mathcal{P}(^N X, Y^*)$ is strongly subdifferentiable.
\end{remark}

\begin{proof}By hypothesis, if $(\phi_n) \subseteq \mathcal{P}(^N X, Y^*)^*$ with $\| \phi_n \| \leq1$ satisfies that $\re \phi_n (P) \rightarrow 1$, then we have that $\dist (\phi_n, C(P)) \rightarrow 0$. Given that $C(P) \subseteq D(P)$, we have that $\dist( \cdot, D(P)) \leq \dist(\cdot, C(P))$ and, therefore, $\dist(\phi_n, D(P)) \rightarrow 0$. Hence, the norm of $\mathcal{P}(^N X, Y^*)$ is SSD.
\end{proof}

Implication (e)$\Rightarrow$(a) of Theorem~\ref{theoremA} shows that, whithout any assumption on the space $X$, the $N$-homogeneous polynomial $\Loo$ of the pair $(X,\mathbb{K})$ imply that $\mathcal{P}(^NX)$ is SSD (the same holds for the pair $(X, Y^*)$ whenever $Y$ is uniformly convex). In other words, the $N$-homogeneous polynomial $\Loo$ is stronger than strong subdifferentiability. 
In view of the previous remark, it is natural to ask if there is a relation between the $N$-homogeneous polynomial $\Loo$ and strongly exposition of the set $C(P)$ for every $P\in S_{\mathcal{P}(^N X, Y^*)}$. In Theorem~\ref{prop-star-equivalent-Loo} below, we prove that if the underlying spaces are uniformly convex, these properties are equivalent. 
On the way there we prove an auxiliary lemma, which relates the $N$-homogeneous polynomial $\Loo$ 
with the denseness of norm attaining symmetric tensors. This result, interesting by its own, should be compared with \cite[Theorem~3.8]{DGJR}. Recall that $z \in \sten X$ attains its projective symmetric norm if there are bounded sequences $(\lambda_n)_{n=1}^{\infty} \subseteq \K$ and $(x_n)_{n=1}^{\infty} \subseteq B_X$ such that $\|z\|_{\pi_s, N} = \sum_{n=1}^{\infty} |\lambda_n|$ and $z = \sum_{n=1}^{\infty} \lambda_n \otimes^N x_n$. In such a case, we say that the tensor $z$ is norm-attaining. We denote by $\NA_{\pi} (\sten X)$ the set of all $z \in \sten X$ such that $z$ attains its projective symmetric norm. Analogously, $z \in (\sten X) \pten Y$ attains its projective norm if there are bounded sequences $(\lambda_n)_{n=1}^{\infty} \subseteq \K$, $(x_n)_{n=1}^{\infty} \subseteq B_X$ and $(y_n)_{n=1}^{\infty} \subseteq B_Y$ such that $z = \sum_{n=1}^{\infty} \lambda_n (\otimes^N x_n)\otimes y_n$ with $\|z\|_{\pi} = \sum_{n=1}^{\infty} |\lambda_n|$. As expected, we denote $\NA_{\pi} ((\sten X)\pten Y)$ the set of norm attaining tensors.

\begin{lemma} \label{NA-symm1} Let $X, Y$ be reflexive Banach spaces.
\begin{enumerate}
\itemsep0.3em 
  \item[(i)] If the pair $(X,\mathbb{K})$ has the $N$-homogeneous polynomial $\Loo$, then 
\begin{equation*} 	
	\overline{\NA_{\pi}(\sten X)}^{\|\cdot\|_{\pi, s, N}} = \sten X.
\end{equation*} 
Moreover, given $\e >0$ and $z\in S_{\sten X}$, if $P_0\in S_{\mathcal{P}(^NX)}$ satisfies $1 = \langle P_0, z \rangle$, there exist $w\in \NA_{\pi}(\sten X)$ such that $\|w-z\|_{\pi_s, N}<\e$ and $\| w \|_{\pi_s, N} = \langle P_0, w \rangle$.
  \item[(ii)] If $Y$ is uniformly convex and the pair $(X,Y^*)$ has the $N$-homogeneous polynomial $\Loo$, then 
\begin{equation*} 	
	\overline{\NA_{\pi}((\sten X)\pten Y)}^{\|\cdot\|_{\pi}} = (\sten X)\pten Y.
\end{equation*} 
Moreover, given $\e >0$ and $z\in S_{(\sten X)\pten Y}$, if $P_0\in S_{\mathcal{P}(^NX, Y^*)}$ satisfies $1 = \langle P_0, z \rangle$, there exist $w\in \NA_{\pi}((\sten X)\pten Y)$ such that $\|w-z\|_\pi<\e$ and $\| w \|_\pi = \langle P_0, w \rangle$.
\end{enumerate}
\end{lemma}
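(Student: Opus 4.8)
The plan is to derive the two density statements from the refined ``moreover'' assertions, so I would treat the latter as the real goal. For the density of $\NA_\pi(\sten X)$ in $\sten X$, observe that every nonzero $z$ can be rescaled to $S_{\sten X}$ without affecting norm-attainment, and that Hahn--Banach provides $P_0\in S_{\mathcal P(^NX)}$ with $\langle P_0,z\rangle=1$; hence it suffices to prove the ``moreover'' part. I would begin by fixing a small parameter $\delta>0$ (to be chosen last) and selecting a representation $z=\sum_n\lambda_n\otimes^Nx_n$ with $\|x_n\|=1$ and $\sum_n|\lambda_n|<1+\delta$.

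Next I would exploit that $P_0$ norms $z$. Writing $1=\langle P_0,z\rangle=\sum_n\lambda_nP_0(x_n)$ and using $|P_0(x_n)|\le\|x_n\|^N=1$, one gets $\sum_n\bigl(|\lambda_n|-\re\lambda_nP_0(x_n)\bigr)=\sum_n|\lambda_n|-1<\delta$, a sum of nonnegative terms. Calling an index $n$ \emph{good} when $|P_0(x_n)|>1-\eta$, with $\eta=\eta(\e'',P_0)$ the modulus from the $N$-homogeneous polynomial $\Loo$ property and $\e''$ an auxiliary parameter, the bad indices satisfy $|\lambda_n|-\re\lambda_nP_0(x_n)\ge\eta|\lambda_n|$, so their total mass is at most $\delta/\eta$. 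For each good $n$ the $\Loo$ property applied to $P_0$ furnishes $z_n\in S_X$ with $|P_0(z_n)|=1$ and $\|z_n-x_n\|<\e''$.

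I would then set $\mu_n:=|\lambda_n|\overline{P_0(z_n)}$ for good $n$ and define $w:=\sum_{\text{good}}\mu_n\otimes^Nz_n$. Since $|P_0(z_n)|=1$ we have $\mu_nP_0(z_n)=|\lambda_n|$ and $|\mu_n|=|\lambda_n|$, whence $\langle P_0,w\rangle=\sum_{\text{good}}|\lambda_n|=\sum_{\text{good}}|\mu_n|\ge\|w\|_{\pi_s,N}\ge|\langle P_0,w\rangle|$; equality throughout forces $\|w\|_{\pi_s,N}=\langle P_0,w\rangle$ and exhibits $w$ as norm-attaining with $P_0$ as norming polynomial. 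To bound $\|z-w\|_{\pi_s,N}$ I would split $z-w$ into the bad tail (mass $<\delta/\eta$) and, over good indices, the two pieces $\lambda_n(\otimes^Nx_n-\otimes^Nz_n)$ and $(\lambda_n-\mu_n)\otimes^Nz_n$. The first is controlled via $\|\otimes^Nx_n-\otimes^Nz_n\|_{\pi_s,N}\le N\|x_n-z_n\|<N\e''$; the second, where $|\lambda_n-\mu_n|=|\lambda_n|\,|\theta_n-\overline{P_0(z_n)}|$ with $\theta_n=\lambda_n/|\lambda_n|$, is the crux. Here I would use $|\theta_n-\overline{P_0(x_n)}|\le\sqrt{2}\,\sqrt{1-\re\theta_nP_0(x_n)}$ together with Cauchy--Schwarz to obtain $\sum_{\text{good}}|\lambda_n|\,|\theta_n-\overline{P_0(x_n)}|\le\sqrt{2(1+\delta)\delta}$, absorbing the discrepancy between $P_0(x_n)$ and $P_0(z_n)$ into the $\e''$-term. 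Choosing first $\e''$ and then $\delta$ small (so that $\delta/\eta(\e'',P_0)$, $N\e''$ and $\sqrt{\delta}$ are all $\ll\e$) yields $\|z-w\|_{\pi_s,N}<\e$. I expect this phase/coefficient estimate, tamed by the Cauchy--Schwarz trick, to be the main obstacle.

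For part (ii) the scheme is identical after enlarging the elementary tensors by unit vectors $y_n\in S_Y$: from $1=\sum_n\lambda_nP_0(x_n)(y_n)$ and $|P_0(x_n)(y_n)|\le\|P_0(x_n)\|\le1$ one again obtains $\sum_n\bigl(|\lambda_n|-\re\lambda_nP_0(x_n)(y_n)\bigr)<\delta$ and the same good/bad split governed by $\|P_0(x_n)\|>1-\eta$. The extra ingredient is the replacement of $y_n$: after perturbing $x_n$ to $z_n$ with $\|P_0(z_n)\|=1$, I would invoke the uniform convexity of $Y$ exactly as in the proof of $(e)\Rightarrow(a)$ of Theorem~\ref{theoremA} (through \cite[Theorem~2.1]{KL}) to produce $\tilde y_n\in S_Y$ with $P_0(z_n)(\tilde y_n)=1$ and $\|\tilde y_n-\theta_ny_n\|$ small, using that $P_0(z_n)(\theta_ny_n)$ is close to $1$. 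Setting $w:=\sum_{\text{good}}|\lambda_n|(\otimes^Nz_n)\otimes\tilde y_n$ makes $P_0$ norm $w$, and the distance estimate proceeds as before with the additional $\|\tilde y_n-\theta_ny_n\|$ contribution absorbed by uniform convexity; reflexivity of $X$ and $Y$ is used throughout to guarantee the existence of the relevant norming points.
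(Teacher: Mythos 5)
Your strategy is essentially the paper's: extract a representation whose coefficient sum has slack $\sum_n|\lambda_n|-1<\delta$, split the indices into a \emph{good} set where $P_0$ is nearly norming and a \emph{bad} remainder of mass at most $\delta/\eta$, perturb the good points via the $N$-homogeneous polynomial $\Loo$, correct phases, and in (ii) correct the $Y$-components via uniform convexity. The two genuine variations -- working with an infinite representation of $z$ itself instead of a finite approximant $z'$, and controlling the phase error in aggregate by Cauchy--Schwarz rather than pointwise -- are both sound. However, one inequality you rely on is false as stated: $\|\otimes^N x-\otimes^N z\|_{\pi_s,N}\le N\|x-z\|$ fails over the reals. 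Take $X=\ell_\infty^2$, $P(u,v)=u^2-v^2$ (so $\|P\|=1$), $x=(1,1-2t)$, $z=(1-2t,1)$: then $\|\otimes^2x-\otimes^2z\|_{\pi_s,2}\ge|P(x)-P(z)|=8t(1-t)=4(1-t)\,\|x-z\|_\infty$, which exceeds $2\|x-z\|_\infty$. The correct available bound is $\|\otimes^N x-\otimes^N z\|_{\pi_s,N}\le\frac{N^{N+1}}{N!}\|x-z\|$ for $x,z\in B_X$ (\cite[Lemma~2.2]{DGJR}, which the paper invokes); since your argument only needs \emph{some} constant depending on $N$, this is harmless once corrected.

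The genuine gap is in part (ii). You define the good indices by the condition $\|P_0(x_n)\|>1-\eta$ alone, but later invoke uniform convexity of $Y$ ``using that $P_0(z_n)(\theta_n y_n)$ is close to $1$.'' That closeness does not follow from your split: an index may satisfy $\|P_0(x_n)\|=1$ while $\re\,\theta_n P_0(x_n)(y_n)$ stays far from $1$ (the functional $P_0(x_n)\in Y^*$ is nearly norming, but not nearly norming \emph{at} $y_n$), and for such $n$ no $\tilde y_n$ close to $\theta_n y_n$ with $P_0(z_n)(\tilde y_n)=1$ exists, so your definition of $w$ breaks down. In part (i) this issue is invisible because the phase discrepancy is quadratically tied to the slack and can be summed by Cauchy--Schwarz; the modulus of convexity $\delta_Y$ is not quadratic in general, so the same trick does not transfer. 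The repair is exactly the paper's choice of good set: define it by the pairing condition $\re\,\theta_n P_0(x_n)(y_n)>1-\eta'$, with $\eta'$ a minimum of $\eta(\cdot,P_0)$ and a quantity coming from \cite[Theorem~2.1]{KL}. This condition automatically implies $\|P_0(x_n)\|>1-\eta'$ (so the $\Loo$ perturbation $x_n\mapsto z_n$ still applies), it survives that perturbation so that $|P_0(z_n)(\theta_n y_n)|$ is genuinely close to $1$ and uniform convexity yields $\tilde y_n$, and the discarded indices still have mass at most $\delta/\eta'$ by the same slack estimate $\sum_n|\lambda_n|\bigl(1-\re\,\theta_n P_0(x_n)(y_n)\bigr)<\delta$. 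With that single change, your argument for (ii) goes through.
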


\begin{proof} 
For the proof of (i), we follow ideas from \cite[Proposition 4.3]{DJRR}. Let $z \in \sten X$ with $\| z \| =1$ and $\e >0$ be given and fix $\delta>0$ (which will be chosen appropriately later). We can find $P_0 \in \mathcal{P} (^N X)$ such that $\| P_0 \| = \langle P_0, z \rangle = 1$. Let $z' = \sum_{j=1}^m \lambda_j \otimes^N x_j$ with $\lambda_j \geq 0$, $(x_j)_j\subset B_X$ and $m \in \N$ be such that 
$$
\sum_{j=1}^m \lambda_j \leq 1 + \eta(\de, P_0)^2 \quad \text{and}\quad \| z- z' \| < \eta(\de, P_0)^2,
$$ 
where $\eta(\de, P_0)>0$ is the one given in the definition of the $N$-homogeneous polynomial $\Loo$ of the pair $(X,\mathbb{K})$. 
Note that 
\[
1+\eta(\de, P_0)^2 \geq \sum_{j=1}^m \lambda_j \geq \re \sum_{j=1}^m \lambda_j \langle P_0, \otimes^N x_j \rangle > 1-\eta(\de, P_0)^2,
\]
which implies that $\sum_{j=1}^m \lambda_j (1-\re \langle P_0, \otimes^N x_j\rangle ) < 2 \eta(\de, P_0)^2$. 
Now, defining 
\begin{equation*} 
	A = \Big\{ i \in \{1,\ldots, m\} : 1 - \re \langle P_0, \otimes^N x_i \rangle < \eta(\de, P_0) \Big\}
\end{equation*} 	
and noting that 
	\[
	\eta(\de, P_0) \sum_{j \in \{1,\ldots,m\} \setminus A} \lambda_j \leq \sum_{j \in \{1,\ldots,m\} \setminus A} \lambda_j (1-\re\langle P_0, \otimes^N x_j\rangle ) < 2 \eta(\de, P_0)^2,
	\]
we deduce $\sum_{j \in \{1,\ldots,m\} \setminus A} \lambda_j < 2 \eta(\de,P_0)$. By the $N$-homogeneous polynomial $\Loo$ of the pair $(X;\mathbb{K})$, for each $j \in A$ we can take $u_j \in S_X$ so that 
$$
|P_0 (u_j)| = 1\quad \text{and}\quad \|u_j - x_j \| < \de.
$$ 
Write $P_0 (u_j) = \theta_j \in \mathbb{T}$ for each $j \in A$. Note that $\| \otimes^N u_j - \otimes^N x_j \| < \frac{N^{N+1}}{N!} \de$ for each $j \in A$ (see, for instance, \cite[Lemma 2.2]{DGJR}). Moreover, for each $j \in A$, 
	\[
	\re \theta_j = \re \langle P_0, \otimes^N u_j \rangle > (1-\eta(\de, P_0)) - \frac{N^{N+1}}{N!} \de 
	\] 
	which implies that $|1 - \theta_j | < \sqrt{2 (\eta(\de, P_0) + \frac{N^{N+1}}{N!} \de) }$ for each $j \in A$ (if we consider the real scalar field, then $\theta_j$ would be $1$). If we let $w = \sum_{j\in A} \lambda_j \theta_j^{-1} \otimes^N u_j$, then 
	\begin{eqnarray*}
	\| w - z\| &\leq& \|w-z'\|+\|z'-z\|\\
	&\leq& \left\| \sum_{j\in A} \lambda_j \theta_j^{-1} \otimes^N u_j- \sum_{j=1}^m \lambda_j \otimes^N x_j	\right\| + \eta(\de, P_0)^2 \\
	&\leq& \left\| \sum_{j\in A} \lambda_j \theta_j^{-1} \otimes^N u_j- \sum_{j\in A} \lambda_j \otimes^N u_j \right\| + \left\| \sum_{j\in A} \lambda_j (\otimes^N u_j -\otimes^N x_j)  \right\| + 2 \eta(\de, P_0) + \eta(\de, P_0)^2\\
	&<& (1+\eta(\de, P_0)^2 ) \left(\sqrt{2 \left(\eta(\de, P_0) + \frac{N^{N+1}}{N!} \de\right) } + \frac{N^{N+1}}{N!} \de \right) + 2 \eta(\de, P_0) + \eta(\de, P_0)^2.
	\end{eqnarray*} 
On the one hand, choosing $\de>0$ small enough we obtain $\|w-z\|<\e$.	On the other hand, noticing that $\| w \| = \langle P_0, w \rangle = \sum_{j \in A} \lambda_j$ we deduce that $w$ attains its norm. 

We briefly sketch the proof of (ii), which is analogous to the previous one. In what follows, we denote $\|\cdot\|$ both projective and symmetric projective norms, since it is clear by context. Let $\e >0$ and $z\in S_{(\sten X)\pten Y}$ be given, and consider $P_0\in S_{\mathcal{P}(^NX, Y^*)}$ such that $1 = \langle P_0, z \rangle$. Take $z'=\sum_{j=1}^m \left(\sum_{i=1}^n \lambda_{j,i} \otimes^N x_{j,i}\right)\otimes y_j$ with $\lambda_{j,i}>0$, $x_{j,i} \subset B_X$ and $y_j\in B_Y$ such that
$$
\sum_{j=1}^m \sum_{i=1}^n \lambda_{j,i} \leq 1 + \min\left\{\eta\left(\frac{\delta_Y(\de/2)}{2},P_0\right), \frac{\delta_Y(\de/2)}{2}\right\}^2=: 1 + \tilde{\eta}(\de, P_0)^2
$$
and $\|z-z'\|<\tilde{\eta}(\de, P_0)^2$,
where $\de>0$ is fixed (and chosen appropriately below) and $\delta_Y(\cdot)$ is the modulus of uniform convexity of $Y$. It can be seen that 
$$
\sum_{j=1}^m \sum_{i=1}^n \lambda_{j,i} (1-\re P_0(x_{j,i})(y_j)) < 2 \tilde{\eta}(\de, P_0)^2.
$$ 
Then, defining 
$$
	A = \Big\{ (j,i) \in \{1,\ldots, m\}\times \{1,\ldots, n\} : 1 - \re P_0(x_{j,i})(y_j) < \tilde{\eta}(\de, P_0) \Big\}
$$
we have 
$$
\sum_{(j,i)\notin A} \lambda_{j,i}<2\tilde{\eta}(\delta, P_0).
$$
Now, on the one hand, by the $N$-homogeneous polynomial $\Loo$ of the pair $(X,Y^*)$, for each $(j,i) \in A$ we can take $u_{j,i} \in S_X$ so that 
$$
\|P_0 (u_{j,i})\| = 1\quad \text{and}\quad \|u_{j,i} - x_{j,i} \| < \frac{\delta_Y(\de/2)}{2}.
$$ 
On the other hand, for each $(j,i) \in A$ we have
$$
|P_0(u_{j,i})(y_j)|>|P_0(x_{j,i})(y_j)|-\|u_{j,i}-x_{j,i}\|>1-\frac{\delta_Y(\de/2)}{2}-\frac{\delta_Y(\de/2)}{2}
$$
and, since $Y$ is uniformly convex, by \cite[Theorem~2.1]{KL} there exist $v_j\in S_Y$ such that
$$
|P_0(u_{j,i})(v_j)|=1\quad \text{and}\quad \|v_j-y_j\|<\de.
$$
Then, putting $\theta_{j,i}=P_0(u_{j,i})(v_j)$, the norm attaining tensor which approximates $z$ is $$w=\sum_{(j,i)\in A} \left(\sum_{i=1}^n \lambda_{j,i} \theta_{j,i}^{-1} \otimes^N u_{j,i}\right)\otimes v_j.$$
\end{proof} 
 
We are ready now to prove the mentioned equivalence between the $N$-homogeneous polynomial $\Loo$ and strong exposition of the set $C(P)$.

\begin{theorem} \label{prop-star-equivalent-Loo} Let $X$ and $Y$ be uniformly convex Banach spaces. The pair $(X, Y^*)$ has the $N$-homogeneous polynomial $\Loo$ if and only if $P$ strongly exposes $C(P)$ for every $P\in S_{\mathcal{P}(^N X, Y^*)}$.
\end{theorem}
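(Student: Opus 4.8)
The plan is to prove the two implications separately, and in both to exploit that uniform convexity of $X$ and $Y$ forces reflexivity. First I would record a reduction. By Remark~\ref{remark:Loo}(ii) the $N$-homogeneous polynomial $\Loo$ of $(X,Y^*)$ makes every $P\in\mathcal{P}(^N X,Y^*)$ norm attaining, so (as $Y$ is reflexive) Remark~\ref{remark:Loo}(iv) yields that $\mathcal{P}(^N X,Y^*)=((\sten X)\pten Y)^*$ is reflexive; hence $W:=(\sten X)\pten Y$ is reflexive and $\mathcal{P}(^N X,Y^*)^*=W$. Consequently $C(P)\subseteq B_W$ and the statement ``$P$ strongly exposes $C(P)$'' reduces to the $\e$--$\delta$ assertion that for every $\e>0$ there is $\rho>0$ with $\dist(z,C(P))<\e$ whenever $z\in B_W$ and $\re\langle P,z\rangle>1-\rho$.

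For the implication $\Loo\Rightarrow$ ``$P$ strongly exposes $C(P)$'', I would re-run the construction in the proof of Lemma~\ref{NA-symm1}(ii), observing two things. First, the hypothesis $\langle P_0,z\rangle=1$ used there only serves to bound $\sum_{j,i}\lambda_{j,i}(1-\re P(x_{j,i})(y_j))$; starting instead from $\re\langle P,z\rangle>1-\rho$ merely adds a term $\rho$ to this bound, so the whole argument survives after shrinking the auxiliary parameters. Second, the norm-attaining tensor produced there, $w=\sum_{(j,i)\in A}\lambda_{j,i}\,\theta_{j,i}^{-1}(\otimes^N u_{j,i})\otimes v_j$ with $\theta_{j,i}=P(u_{j,i})(v_j)\in\T$, actually lies (after normalization) in $C(P)$: writing $\theta_{j,i}^{-1}=\beta_{j,i}^N$ for a unimodular $\beta_{j,i}$ and using $N$-homogeneity, $\theta_{j,i}^{-1}(\otimes^N u_{j,i})\otimes v_j=(\otimes^N(\beta_{j,i}u_{j,i}))\otimes v_j$ is a generator of $C(P)$ since $P(\beta_{j,i}u_{j,i})(v_j)=1$. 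As the total mass $s=\sum_{(j,i)\in A}\lambda_{j,i}$ is within the chosen tolerance of $1$, the normalization $s^{-1}w\in C(P)$ satisfies $\dist(z,C(P))\le\|z-w\|+\|w-s^{-1}w\|<\e$ once the parameters are small. By Remark~\ref{remark strongly C(P)} this in particular reproves that $\mathcal{P}(^N X,Y^*)$ is SSD at $P$.

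For the converse, assume $P$ strongly exposes $C(P)$ for every $P$, fix $\e>0$ and a norm-one $P$, and let $x\in S_X$ satisfy $\|P(x)\|>1-\eta$ for a threshold $\eta$ to be fixed. Choosing $y\in S_Y$ with $\re P(x)(y)=\|P(x)\|$ (after a unimodular rotation of $y$, using reflexivity of $Y$) produces an elementary tensor $z=(\otimes^N x)\otimes y\in S_W$ with $\re\langle P,z\rangle>1-\eta$. Taking $\eta=\rho(\e')$ from strong exposition, I obtain a finite convex combination $c=\sum_k t_k(\otimes^N u_k)\otimes v_k$ of generators of $C(P)$ (so $P(u_k)(v_k)=1$, whence $u_k\in\NA(P)$) with $\|z-c\|_\pi<\e'$. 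The crux is to isolate one good index: picking support functionals $x^*\in S_{X^*}$, $v^*\in S_{Y^*}$ with $x^*(x)=1$ and $v^*(y)=1$, the norm-one polynomial $R(u):=(x^*(u))^N v^*$ satisfies $\langle R,z\rangle=1$, hence $\re\langle R,c\rangle>1-\e'$; since $|(x^*(u_k))^N v^*(v_k)|\le1$ and $\sum_k t_k=1$, some index $k_0$ satisfies $|x^*(u_{k_0})|^N>1-\e'$. Writing $x^*(u_{k_0})=|x^*(u_{k_0})|\alpha$ with $\alpha\in\T$, uniform convexity of $X$ applied to $\bar\alpha u_{k_0}$ and $x$ (both nearly maximizing $\re x^*$) gives $\|\bar\alpha u_{k_0}-x\|<\e$ once $\e'$ is small in terms of $\delta_X(\e)$ and $N$. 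Finally $\NA(P)$ is invariant under multiplication by $\T$, so $x_0:=\bar\alpha u_{k_0}\in\NA(P)$ is the desired norming point with $\|P(x_0)\|=1$ and $\|x_0-x\|<\e$; this is exactly the $N$-homogeneous polynomial $\Loo$.

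The delicate point is this extraction step in the converse: closeness of a single elementary tensor to a convex combination of generators does not, a priori, place $x$ near one generator. The auxiliary polynomial $R$ linearizes precisely the scalar quantity $x^*(\cdot)$ in the homogeneous variable, and it is here that uniform convexity of $X$ (to upgrade $|x^*(u_{k_0})|\approx1$ to $u_{k_0}\approx$ a unimodular rotate of $x$) together with the $\T$-invariance of $\NA(P)$ are indispensable, while uniform convexity of $Y$ enters the other implication through Lemma~\ref{NA-symm1}(ii). I expect the only genuinely technical work to be the bookkeeping of the several small parameters and the passage from the approximate condition $\re\langle P,z\rangle>1-\rho$ to the exact face condition $\langle P_0,z\rangle=1$ used in the lemma.
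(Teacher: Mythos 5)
Your proof is correct and follows essentially the same route as the paper: your converse is the paper's argument almost verbatim (pairing the elementary tensor $(\otimes^N x)\otimes y$ with the auxiliary polynomial $u\mapsto (x^*(u))^N v^*$, extracting a good index by a convex-combination argument, then using uniform convexity of $X$ and the $\T$-invariance of $\NA(P)$), only stated directly instead of by contradiction. In the forward direction the paper applies Lemma~\ref{NA-symm1}(ii) as a black box to elements of $D(P)$ to conclude $C(P)=D(P)$ and then invokes the SSD of $\mathcal{P}(^NX,Y^*)$ from Theorem~\ref{theoremA}, whereas you inline a perturbed version of the lemma's construction under the relaxed hypothesis $\re\langle P,z\rangle>1-\rho$; this is the same machinery with different bookkeeping, and your identification of the single place where the exact face condition is used (the bound on $\sum_{j,i}\lambda_{j,i}(1-\re P(x_{j,i})(y_j))$) is accurate, so the stability claim holds.
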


\begin{proof}
Suppose first that $(X, Y^*)$ has the $N$-homogeneous polynomial $\Loo$. In view of implication (e)$\Rightarrow$(a) of Theorem~\ref{theoremA}, $\mathcal{P}(^N X, Y^*)$ is SSD (here we use the uniform convexity of $Y$). Hence, it is enough to show that $C(P)$ and $D(P)$ coincide for each $P \in \mathcal{P}(^N X, Y^*)$ with $\| P \| =1$. To this end, let $\phi \in D(P)$. Since $\mathcal{P} (^N X, Y^*)$ is reflexive we have $\phi \in (\sten X) \pten Y$ and, by Lemma~\ref{NA-symm1}, we know that given $\e >0$ there exists $\phi' = \sum_{j=1}^m \left(\sum_{i=1}^n \lambda_{j,i} \otimes^N u_{j,i}\right) \otimes v_j$ in $(\sten X) \pten Y$ satisfying
$$
\| \phi'\| = \langle P, \phi' \rangle = \sum_{j=1}^m \sum_{i=1}^n \lambda_{j,i}\quad \text{and}\quad \| \phi - \phi'\| < \e,
$$ 
where $\lambda_{j,i}>0$, $u_{j,i} \in S_X$ and $v_j \in S_Y$ for each $(j,i) \in \{1,\ldots, m\}\times \{1,\ldots, n\}$. This implies that $\phi^{''} := \sum_{j=1}^m \sum_{i=1}^n \left(\frac{\lambda_{j,i}}{\|\phi'\|} \otimes^N u_{j,i}\right) \otimes v_j \in C(P)$ and $\| \phi^{''} - \phi \| < 2\e$. Thus, $D(P) \subseteq C(P)$ and we are done. 

Let us see now the reverse implication. Suppose, by contradiction, that $(X, Y^*)$ does not satisfy the $N$-homogeneous polynomial $\Loo$. Then, there exist $P \in \mathcal{P}(^N X, Y^*)$ with $\|P\| = 1$ and $\e_0 > 0$ such that, for every $n \in \N$, there exists $x_n \in S_X$ with 
	\begin{equation} \label{ineq7}
	1 -\frac{1}{n} \leq \|P(x_n) \| \leq 1 \ \ \ \mbox{and} \ \ \ \dist(x_n, \NA(P)) \geq \e_0 > 0.
	\end{equation}
	Take $y_n \in S_Y$ and $\theta_n \in \mathbb{T}$ so that 
	\begin{equation*} 	
	|P(x_n)(y_n)| = P(x_n)( \theta_n y_n) > 1 -\frac{1}{n}. 
	\end{equation*} 
	Since $P$ strongly exposes $C(P)$, we have that $\dist( ( \otimes^N x_n) \otimes \theta_n y_n , C(P)) \rightarrow 0$. Without loss of generality, we assume that, for every $n \in \N$, 
	\begin{equation*}
	\dist\left(( \otimes^N x_n) \otimes \theta_n y_n , C(P)\right) \leq \frac{1}{n}.
	\end{equation*}
	For each $n \in \N$, let us take $\lambda_{1,n}, \ldots, \lambda_{s_n,n} > 0, u_{1,n}, \ldots, u_{s_n,n} \in S_X$ and $v_{1,n},\ldots, v_{s_n, n} \in S_Y$ to be such that 
	\begin{itemize}
	\itemsep0.3em
		\item[(I)] $\displaystyle \sum_{j=1}^{s_n} \lambda_{j,n} = 1$,
		\item[(II)] $P(u_{j,n}) (v_{j,n}) = 1$ for every $j=1,\ldots, s_n$ and 
		\item[(III)] $\displaystyle \left\| (\otimes^N x_n) \otimes \theta_n y_n - \sum_{j=1}^{s_n} \lambda_{j,n} (\otimes^N u_{j,n}) \otimes v_{j,n} \right\| < \frac{1}{n}$. 
	\end{itemize}
	Now, let us take $x_n^* \in S_{X^*}$ and $y_n^* \in S_{Y^*}$ to be such that $x_n^*(x_n) = 1$ and $y_n^* (y_n) = \theta_n^{-1}$ for every $n \in \N$. Then
	\begin{eqnarray*}
		\re \sum_{j=1}^{s_n} \lambda_{j,n} x_n^*(u_{j,n})^N y_n^* (v_{j,n}) &=& \re \left\langle (x_n^*)^N \otimes y_n^* , \sum_{j=1}^{s_n} \lambda_{j,n} (\otimes^N u_{j,n} )\otimes v_{j,n} \right\rangle \\
		&\stackrel{\text{(III)}}{\geq}& \re \left\langle (x_n^*)^N \otimes y_n^* , (\otimes^N x_n) \otimes \theta_n y_n \right\rangle - \frac{1}{n} \\
		&=& 1 - \frac{1}{n}.
	\end{eqnarray*}
	By a standard convex combination argument, there exists $t_n \in \{1,\ldots, s_n\}$ such that \linebreak $z_n := u_{t_n, n} \in S_X$ and $w_n := v_{t_n, n} \in S_Y$ satisfying 
	\begin{equation*}
	\re x_n^*(z_n)^N y_n^* (w_n) \geq 1 - \frac{1}{n}.
	\end{equation*}
	Taking $n_0 \in \N$ large enough so that 
	\begin{equation*}
	\left( 1 - \frac{1}{n_0} \right)^{\frac{1}{N}} > 1 - \delta_X(\e_0/2),
	\end{equation*}
	we have
	\begin{equation*}
	|x_{n_0}^* (z_{n_0})| > 1 - \delta_X (\e_0/2) 
	\end{equation*}
	which implies, by the uniform convexity of $X$, that $\| \theta z_{n_0} - x_{n_0}\| < \e_0$ for some $\theta \in \mathbb{T}$. But $\| P(\theta z_{n_0}) \| = 1$ from (II) above, which yields a contradiction with (\ref{ineq7}).
\end{proof}

\begin{remark}
It is worth noting that if $X$ has the CAP and the sequential Kadec-Klee property and $Y$ is uniformly convex, then the following are equivalent:
\begin{enumerate}
\itemsep0.3em 
\item[(a)] $\mathcal{P}(^N X, Y^*)$ is SSD.
\item[(b)] $(X, Y^*)$ has the $N$-homogeneous polynomial $\Loo$.
\item[(c)] $P$ strongly exposes $C(P)$ for every $P\in S_{\mathcal{P}(^N X, Y^*)}$.
\end{enumerate}
Indeed, implication (a)$\Rightarrow$(b) follows from Theorem~\ref{theoremA} (here we use the CAP and Kadec-Klee properties of the space $X$), while (b)$\Rightarrow$(c) follows from the first implication in the proof of Theorem~\ref{prop-star-equivalent-Loo}. Finally, implication (c)$\Rightarrow$(a) is the (almost trivial) Remark~\ref{remark strongly C(P)}. In view of Theorem~\ref{prop-star-equivalent-Loo}, we have that the equivalence (b)$\Leftrightarrow$(c) holds whenever $Y$ is uniformly convex and $X$ is uniformly convex or has the CAP and the sequential Kadec-Klee property. Since there exist uniformly convex spaces failing the CAP (see, for instance, \cite{Sza}), Theorem~\ref{prop-star-equivalent-Loo} apply to Banach spaces which are not in the hypotheses of Theorem~\ref{theoremA}.
\end{remark}

\subsection{Diagram with implications}\label{diagram}
We now provide all the properties that were discussed in the previous subsections, together with diagrams that show the connections between them and the hypotheses required for each connection.
Let $X, Y, X_1, \ldots, X_N$ be reflexive Banach spaces, and consider the following statements:

\begin{enumerate}
\itemsep0.3em 
  \item[(A)] $\mathcal{P}(^N X, Y^*)$ is SSD.
  \item[(B)] The pair $\left( \left( \sten X \right) \pten Y, \K \right)$ has the {\bf L}$_{o,o}$ (for linear functionals).
  \item[(C)] $\mathcal{P}(^N X, Y^*)$ is reflexive.
  \item[(D)] $\mathcal{P}(^N X, Y^*)=\mathcal{P}_{wsc}(^N X, Y^*)$.
  \item[(E)] The pair $(X, Y^*)$ has the $N$-homogeneous polynomial $\Loo$.
	\item[(F)] $P$ strongly exposes $C(P)$ for every $P\in S_{\mathcal{P}(^N X, Y^*)}$.
\end{enumerate}

and

\begin{enumerate}
\itemsep0.3em 
\item[(A')] The norm of $\mathcal{L}(X_1\times\cdots\times X_N)=\mathcal{L}(X_1\times\cdots\times X_{N-1},X_N^*)$ is SSD.
\item[(B')] The pair $\left( X_1\pten\cdots \pten X_N , \K \right)$ has the {\bf L}$_{o,o}$ (for linear functionals).
\item[(C')] $\mathcal{L}(X_1\times\cdots\times X_N)$ is reflexive.
\item[(D')] $\mathcal{L}(X_1\times\cdots\times X_{N-1}, X_N^*)=\mathcal{L}_{wsc}(X_1\times\cdots\times X_{N-1}, X_N^*)$.
\item[(E')] The pair $(X_1\times \cdots\times X_N, \K)$ has the $\Loo$ (for multilinear forms).
\end{enumerate}

Then the following implications hold:
	\begin{center}
		\begin{tikzpicture}[scale=1, baseline=0cm]
		\tikzstyle{caixa} = [rectangle, rounded corners, minimum width=.7cm, minimum height=.7cm, text centered, draw=black]
		\tikzstyle{dfletxa} = [double equal sign distance,-implies, shorten >= 2pt, , shorten <= 2pt]
		\tikzstyle{dfletxalr} = [double equal sign distance, implies-implies, shorten >= 2pt, , shorten <= 2pt]
		\tikzstyle{dfletxadotted} = [double equal sign distance, dashed, -implies, shorten >= 2pt, , shorten <= 2pt]
		
		\node (A) at (-7,-1.5) {$(A)$};
		\node (B) at (-7,-3) {$(B)$};
		\node (C) at (-7,-4.5) {$(C)$};
		\node (D) at (-3.5,-4.5) {$(D)$};
		\node (E) at (-0.5,-4.5) {$(E)$};
		\node (F) at (3,-4.5) {$(F)$};
		
		\draw [dfletxalr] (A) -- (B);
		\draw [dfletxa] (B) -- (C);
		%
		
		
		
		\draw[dfletxadotted, ->, >=implies, rounded corners] (C) to  [bend right]  node[below] {$\scriptstyle X: \ \text{CAP}$} (D);
		\draw[dfletxa, ->, >=implies, rounded corners] (D) to  [bend right]  node[below] {} (C);
		\draw[dfletxadotted, ->, >=implies] (D) to   node[below] {$\scriptstyle X: \ \text{seq. KK}$} (E);
		\draw[dfletxadotted, ->, >=implies, rounded corners] (E) to  [bend right]  node[below] {$\scriptstyle Y: \ \text{UC}$} (F);
		\draw[dfletxadotted, ->, >=implies, rounded corners] (F) to  [bend right]  node[below] {$\scriptstyle X: \ \text{UC}$} (E);
		\draw[dfletxa, ->, >=implies, rounded corners] (F) to  [bend right=30]  node[below] {} (A);
		
		
		\end{tikzpicture}	
	\end{center}

and

	\begin{center}
		\begin{tikzpicture}[scale=1, baseline=0cm]
		\tikzstyle{caixa} = [rectangle, rounded corners, minimum width=.7cm, minimum height=.7cm, text centered, draw=black]
		\tikzstyle{dfletxa} = [double equal sign distance,-implies, shorten >= 2pt, , shorten <= 2pt]
		\tikzstyle{dfletxalr} = [double equal sign distance, implies-implies, shorten >= 2pt, , shorten <= 2pt]
		\tikzstyle{dfletxadotted} = [double equal sign distance, dashed, -implies, shorten >= 2pt, , shorten <= 2pt]
		
		\node (A') at (-7,-1.5) {$(A')$};
		\node (B') at (-7,-3) {$(B')$};
		\node (C') at (-7,-4.5) {$(C')$};
		\node (D') at (-2,-4.5) {$(D')$};
		\node (E') at (3,-4.5) {$(E')$};
		
		\draw [dfletxalr] (A') -- (B');
		\draw [dfletxa] (B') -- (C');
	  \draw [dfletxadotted, <->, >=implies,] (C') to node[below] {$\scriptstyle X_1,\ldots, X_N: \ \text{Schauder basis}$} (D'); 
		\draw[dfletxadotted, ->, >=implies, align=left] (D') to node[below] {$\scriptstyle X_1, \ldots, X_{N-1}: \ \text{seq. KK}$ \\ $\scriptstyle X_N: \ \text{UC}$} (E');
		\draw[dfletxa, ->, >=implies, rounded corners] (E') to [bend right=15] node[below] {} (A');
		
		
		\end{tikzpicture}	
	\end{center}

\section{On the (uniform) strong subdifferentiability of $X \pten Y$ and $\sten X$}
\label{Section:TheoremC}

Analogously of what we do in Section \ref{Section:TheoremAandB}, where we use the \emph{$N$-homogeneous polynomial} (respectively, \emph{multilinear}) $\Loo$ as a tool to obtain the strong subdifferentiability of many spaces of $N$-homogeneous polynomials (respectively, multilinear mappings), in this section we establish a connection between strong subdifferentiability of (symmetric) projective tensor products and \emph{Bishop-Phelps-Bollob\'as point type properties} which, roughly speaking, are the dual counterpart of $\Loo$ properties. Let us briefly clarify the different \emph{point properties} we will be dealing with throughout the section. In \cite{DKL} (see also \cite{DKKLM1}) the authors defined and studied the \emph{Bishop-Phelps-Bollobás point property} (BPBpp, for short) for linear and bilinear operators. We state this property in the next definition, and extend it to the polynomial setting.
\begin{definition}
Let $N\in \mathbb{N}$ and $X, X_1, \dots, X_N, Y$ be Banach spaces. We say that the pair \linebreak $(X_1\times \cdots \times X_N, Y)$ has the \emph{BPBpp} if given $\e > 0$, there exists $\eta(\e) > 0$ such that whenever $A \in \mathcal{L} (X_1\times \cdots \times X_N, Y)$ with $\|A\| = 1$ and $(x_1, \ldots, x_N)\in S_{X_1}\times \cdots \times S_{X_N}$ satisfy $$\|A(x_1, \ldots, x_N) \| > 1 - \eta(\e),$$ there is a new $N$-linear mapping $B \in \mathcal{L} (X_1\times \cdots \times X_N, Y)$ with $\|B\| = 1$ such that 
$$
\|B(x_1, \ldots, x_N)\| = 1 \quad \text{and}\quad \|B - A\| < \e.
$$ 
Analogously, we say that the pair $(X, Y)$ has the \emph{$N$-homogeneous polynomial BPBpp} if given $\e > 0$, there exists $\eta(\e) > 0$ such that whenever $P\in \mathcal{P}(^NX, Y)$ with $\|P\|=1$ and $x\in S_X$ satisfy \linebreak $\|P(x)\|>1-\eta(\e)$, there exists $Q\in \mathcal{P}(^NX, Y)$ with $\|Q\|=1$ such that $\|Q(x)\|=1$ and $\|P-Q\|<\e$.
\end{definition}

Note that these properties are the \emph{uniform} versions of properties $\Lpp$ from Definition~\ref{def Lpp and Loo}, in the sense that the $\eta$ does not depend on the points $(x_1,\dots, x_N)\in S_{X_1}\times \cdots \times S_{X_N}$ and $x\in S_X$ but only on $\e > 0$.
The reason to consider all these \emph{Bishop-Phelps-Bollob\'as point type properties} is that we will derive some strong subdifferentiability results for the Banach spaces $\ell_2\pten \stackrel{N}{\cdots} \pten \ell_2$, $\sten \ell_2$, $c_0\pten c_0$ and $c_0 \widehat{\otimes}_{\pi_s} c_0$ from BPBpp and $\Lpp$ type results (see Subsection~\ref{the proof of thm C} below). 

It is not difficult to see that, both in the above definition and in Definition~\ref{def Lpp and Loo} (i), we can take $\|A\|$ and $\|P\|$ less than or equal to one (not necessarily $\|A\|=\|P\|=1$) by making a standard change of parameters. We will make use of this fact without any explicit mention.

\subsection{The tools} We start this section by proving the first tool we need to get the results from Theorem~\ref{theoremC}. It is known that the pair $(X, \K)$ has the BPBpp for linear functionals if and only if $X$ is uniformly smooth (see \cite[Propositon 2.1]{DKL}). Note that the uniform smoothness of $X$ is equivalent to say that the norm of $X$ is USSD on $U=S_X$ (recall Definition~\ref{definition:uniform-SSD}). Our next result is a \emph{localization} of the above mentioned characterization.

\begin{proposition}\label{thm BPBpp for a set is unif SSD}
	Let $X$ be a Banach space and $U\subseteq S_{X}$. Then the following are equivalent.
	\begin{enumerate}
			\itemsep0.3em 
	  \item[(a)] The norm of $X$ is USSD on $U$. 
		\item[(b)] The pair $(X, \K)$ has the \emph{BPBpp for the set $U$}, that is, given $\e > 0$, there exists $\eta(\e) > 0$ such that whenever $x_1^*\in S_{X^*}$ and $u\in U$ satisfy $|x_1^*(u)|>1-\eta(\e)$, there exists $x_2^*\in S_{X^*}$ such that $|x_2^*(u)|=1$ and $\|x_1^*-x_2^*\|<\e$.
	\end{enumerate}
\end{proposition}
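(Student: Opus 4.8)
The plan is to prove this as a uniform (over $U$) version of the Franchetti--Pay\'a characterization in Theorem~\ref{thm:FP}. The technical heart, valid for every $u\in S_X$, is the duality identity
\[
\dist(x^*, D(u)) \;=\; \sup_{h\in B_X}\big[\re x^*(h) - \tau(u,h)\big]\qquad (x^*\in S_{X^*}),
\]
where $D(u)=\{x^*\in S_{X^*}: x^*(u)=1\}$. The inequality ``$\geq$'' is immediate since $\re y^*(h)\leq \tau(u,h)$ for $y^*\in D(u)$; the inequality ``$\leq$'' follows from a Hahn--Banach separation argument in $(X^*,w^*)$, whose topological dual is $X$, using that $D(u)$ is convex and $w^*$-compact: if $\dist(x^*,D(u))>r$ then $x^*\notin D(u)+rB_{X^*}$, and separating by a suitable $h\in S_X$ yields $\re x^*(h)-\tau(u,h)>r$. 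The key observation is that this identity holds pointwise in $u$, so all the uniformity over $U$ will come from the hypotheses (a) or (b), never from this step.

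For (b)$\Rightarrow$(a), I would fix $\e>0$, let $\eta=\eta(\e')$ be given by the BPBpp for $U$ with $\e'$ to be adjusted, and set $\delta:=\eta/2$. Given $x\in U$, $h\in B_X$ and $0<t<\delta$, choose a norming functional $x_t^*\in S_{X^*}$ with $x_t^*(x+th)=\|x+th\|$. Then $\frac{\|x+th\|-1}{t}\leq \re x_t^*(h)$ and, since $\|x+th\|\geq 1-t$, one gets $\re x_t^*(x)\geq 1-2t>1-\eta$. Applying the BPBpp (to $x_t^*$ and $u=x$) produces $x_2^*\in S_{X^*}$ with $|x_2^*(x)|=1$, say $x_2^*(x)=\theta\in\T$, and $\|x_t^*-x_2^*\|<\e'$; then $\bar\theta x_2^*\in D(x)$, so $\tau(x,h)\geq \re\bar\theta x_2^*(h)$. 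Because $\re\theta=\re x_2^*(x)>1-\eta-\e'$, the phase satisfies $|\theta-1|<\sqrt{2(\eta+\e')}$, whence
\[
\frac{\|x+th\|-1}{t}-\tau(x,h)\leq \re x_t^*(h)-\re\bar\theta x_2^*(h)<\e'+\sqrt{2(\eta+\e')}.
\]
Choosing $\e'$ (and hence $\eta$) small enough makes the right-hand side $<\e$ uniformly in $x\in U$ and $h\in B_X$, which is exactly \eqref{eq:SSD3}.

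For (a)$\Rightarrow$(b) I would argue by contradiction. If the BPBpp for $U$ fails, there are $\e_0>0$, $u_n\in U$ and $x_n^*\in S_{X^*}$ with $|x_n^*(u_n)|>1-1/n$ but $\dist\big(x_n^*,\{y^*\in S_{X^*}:|y^*(u_n)|=1\}\big)\geq\e_0$. Multiplying each $x_n^*$ by a suitable unimodular scalar (which leaves these distances unchanged, since the target set is $\T$-invariant) I may assume $x_n^*(u_n)>1-1/n$ is real; as $D(u_n)$ is contained in that target set, this forces $\dist(x_n^*,D(u_n))\geq\e_0$. Now for any $h\in B_X$ and $t>0$ we have $\|u_n+th\|\geq \re x_n^*(u_n+th)>(1-\tfrac1n)+t\,\re x_n^*(h)$, so $\re x_n^*(h)-\tau(u_n,h)<\big[\frac{\|u_n+th\|-1}{t}-\tau(u_n,h)\big]+\frac{1}{nt}$. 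Taking the supremum over $h\in B_X$ and invoking the duality identity gives $\dist(x_n^*,D(u_n))\leq \sup_{h\in B_X}\big[\frac{\|u_n+th\|-1}{t}-\tau(u_n,h)\big]+\frac{1}{nt}$. Fixing $t$ small via the USSD-on-$U$ hypothesis bounds the first term by any prescribed $\e$, and letting $n\to\infty$ then yields $\limsup_n\dist(x_n^*,D(u_n))\leq\e$; as $\e>0$ is arbitrary this contradicts $\dist(x_n^*,D(u_n))\geq\e_0$.

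The main obstacle is the duality identity together with the bookkeeping of phases in the complex case: one must pass from the ``$|x^*(u)|=1$'' formulation of the BPBpp to membership in $D(u)$ (which requires $x^*(u)=1$) and control that the unimodular correction stays close to $1$. Once this is in place, the uniformity over $U$ is automatic, since $U$ enters only through the uniform choice of $\eta$ (respectively $\delta$) furnished by the hypotheses, whereas the identity above and the norming-functional estimates are valid pointwise in $u\in U$.
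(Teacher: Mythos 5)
Your proof is correct, and it reaches the result by a genuinely different organization than the paper's, so a comparison is worthwhile. The paper proves (a)$\Rightarrow$(b) in one shot by contradiction with an explicit modulus: assuming the BPBpp for $U$ fails with $\eta(\e):=\delta\e/4$ (where $\delta$ witnesses USSD on $U$ for $\e/2$), it separates the disjoint $w^*$-compact convex sets $D(u)$ and $x^*+\e B_{X^*}$ by some $z\in S_X$, evaluates the USSD inequality at the single value $t=\delta/2$, and obtains a numerical contradiction; for (b)$\Rightarrow$(a) it only remarks that the argument is analogous to \cite[Proposition 2.1]{DKL}. You use exactly the same Hahn--Banach separation idea, but you package it once and for all as the duality identity $\dist(x^*,D(u))=\sup_{h\in B_X}\left[\re x^*(h)-\tau(u,h)\right]$, after which both implications become short bookkeeping: your (a)$\Rightarrow$(b) is a sequential limiting argument through that identity, and your (b)$\Rightarrow$(a) is precisely the norming-functional computation the paper leaves to the citation. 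What the paper's route buys is a concrete quantitative relation between the moduli ($\eta(\e)=\delta\e/4$); what yours buys is modularity, a self-contained proof of the direction the paper merely cites, and explicit handling of the unimodular phases needed to pass between the ``$|x^*(u)|=1$'' formulation of the BPBpp and membership in $D(u)$ --- a point the paper's proof glosses over when it silently replaces $|x^*(u)|>1-\eta$ by $\re x^*(u)>1-\eta$. One small point you should make explicit: in (b)$\Rightarrow$(a) you ``choose $\e'$ (and hence $\eta$) small,'' which requires the standard observation that the BPBpp persists when $\eta(\e')$ is decreased, so one may assume $\eta(\e')\leq\e'$ and then $\e'+\sqrt{2(\eta(\e')+\e')}\to 0$ as $\e'\to 0$.
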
 

\begin{proof} Suppose that the norm of $X$ is USSD on the set $U$ and let $\delta>0$ be such that, if $0<t<\delta$, then
 $$\frac{\Vert u + tz\Vert -1}{t}- \tau(u,z) < \frac{\varepsilon}{2}$$
for every $(u,z)\in U\times B_{X}$ (recall (\ref{eq:SSD3})). We will show that the pair $(X, \K)$ has the BPBpp for the set $U$ with $\eta(\varepsilon) := \frac{\delta \varepsilon}{4} >0$. Suppose that this is not the case. Then, there exist $u\in U$ and $x^* \in S_{X^*}$ such that 
$\operatorname{Re} x^*(u) > 1- \eta(\varepsilon)$
and $\Vert x^* -\tilde{x}^* \Vert > \varepsilon$ for every $\tilde{x}^* \in S_{X^*}$ satisfying $\tilde{x}^*(u)=1$. 
Then, $D(u)$ and $x^*+\varepsilon B_{X^*}$ are $w^*$-compact, convex, and disjoint sets. Now, by the Hahn-Banach separation theorem there exists $z \in S_X$ such that 
$$\tau(u,z)=\max\{\operatorname{Re} \tilde{x}^* (z): \tilde{x}^* \in D(u) \}\leq \min \{ \operatorname{Re} (x^*+\varepsilon z^*)(z): z^*\in B_{X^*} \}=\operatorname{Re}x^*(z)-\varepsilon.$$
Then, for $t=\frac{\delta}{2}$ we have
\begin{eqnarray*}
\frac{\varepsilon}{2} &>& \frac{\Vert u + tz \Vert -1}{t}- \tau(u,z) \\
&\geq& \frac{\operatorname{Re} x^* (u+tz) -1}{t}-\operatorname{Re}x^*(z) + \varepsilon \\
&=& \frac{\operatorname{Re}x^*(u) -1}{t}+ \varepsilon \\
&\geq& \frac{1-\eta-1}{t}+ \varepsilon = \frac{-\eta}{t}+ \varepsilon = \frac{- \delta \varepsilon}{4} \frac{2}{\delta}+ \varepsilon= \frac{\varepsilon}{2}\
\end{eqnarray*}
which is a contradiction. The other implication is analogous to \cite[Proposition 2.1]{DKL}.
\end{proof}

Although Proposition~\ref{thm BPBpp for a set is unif SSD} may seem artificial at a first glance, it is useful to relate the BPBpp (for multilinear operators and homogeneous polynomials) with the geometry of the (symmetric) tensor products. From now on, given $X, X_1,\ldots, X_N$ Banach spaces, we denote
\begin{equation*} 
U:=\Big\{x_1\otimes\cdots\otimes x_N:\,\, \|x_j\|=1 \Big\} \subseteq S_{X_1\hat{\otimes}_\pi\cdots \hat{\otimes}_\pi X_N} \quad\text{and}\quad U_s:= \Big\{\otimes^N x:\,\, \|x\|=1 \Big\}\subseteq S_{\sten X}.
\end{equation*} 

\begin{proposition}\label{thm BPBpp for bilinear and polynomial}
Let $X, X_1,\ldots, X_N$ be Banach spaces.
\begin{enumerate}
		\itemsep0.3em 
  \item[(i)] $X_1\hat{\otimes}_\pi\cdots \hat{\otimes}_\pi X_N$ is USSD on $U$ if and only if $(X_1\times \cdots\times X_N, \K)$ has the BPBpp.
	\item[(ii)] $\sten X$ is USSD on $U_s$ if and only if $(X, \K)$ has the $N$-homogeneous polynomial BPBpp.
\end{enumerate}	
\end{proposition}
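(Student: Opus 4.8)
The plan is to obtain both equivalences as immediate consequences of Proposition~\ref{thm BPBpp for a set is unif SSD}, applied to the tensor products themselves, after transporting the resulting \emph{BPBpp for the set $U$} (resp. $U_s$) to the multilinear (resp. polynomial) language through the canonical isometric identifications $(X_1 \pten \cdots \pten X_N)^* = \mathcal{L}(X_1 \times \cdots \times X_N)$ and $(\sten X)^* = \mathcal{P}(^N X)$. The essential point is that the whole analytic content already lies in Proposition~\ref{thm BPBpp for a set is unif SSD}; what remains is to recognize that the BPBpp for the set of elementary tensors is \emph{verbatim} the BPBpp for $N$-linear forms, respectively for $N$-homogeneous polynomials.

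For part (i), first I would apply Proposition~\ref{thm BPBpp for a set is unif SSD} with $X = X_1 \pten \cdots \pten X_N$ and the set $U$: the norm of $X_1 \pten \cdots \pten X_N$ is USSD on $U$ if and only if the pair $(X_1 \pten \cdots \pten X_N, \K)$ has the BPBpp for the set $U$. Then I would invoke the duality $(X_1 \pten \cdots \pten X_N)^* = \mathcal{L}(X_1 \times \cdots \times X_N)$, under which a functional $\Phi \in S_{(X_1 \pten \cdots \pten X_N)^*}$ corresponds to a norm-one $N$-linear form $A$ with $\Phi(x_1 \otimes \cdots \otimes x_N) = A(x_1, \ldots, x_N)$, and $\|\Phi - \Psi\| = \|A - B\|$ for the functionals $\Phi, \Psi$ attached to $A, B$. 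Since every $u \in U$ can be written $u = x_1 \otimes \cdots \otimes x_N$ with $(x_1, \ldots, x_N) \in S_{X_1} \times \cdots \times S_{X_N}$, and $\Phi(u) = A(x_1, \ldots, x_N)$ is independent of the chosen representation, the hypothesis $|\Phi(u)| > 1 - \eta(\e)$ reads $|A(x_1, \ldots, x_N)| > 1 - \eta(\e)$, and the conclusion $|\Psi(u)| = 1$, $\|\Phi - \Psi\| < \e$ reads $|B(x_1, \ldots, x_N)| = 1$, $\|A - B\| < \e$. This is exactly the BPBpp of the pair $(X_1 \times \cdots \times X_N, \K)$, and since the dictionary is a bijection preserving all the relevant quantities, the translation runs identically in both directions, yielding the equivalence.

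Part (ii) proceeds in the same fashion, now applying Proposition~\ref{thm BPBpp for a set is unif SSD} to $X = \sten X$ with the set $U_s$ and using the isometry $(\sten X)^* = \mathcal{P}(^N X)$, under which $\Phi(\otimes^N x) = P(x)$ for the polynomial $P$ associated with $\Phi$. Writing each $u \in U_s$ as $\otimes^N x$ with $\|x\| = 1$, the BPBpp for the set $U_s$ becomes, word for word, the $N$-homogeneous polynomial BPBpp of $(X, \K)$.

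I do not expect a genuine obstacle here: the argument is a dictionary built on top of Proposition~\ref{thm BPBpp for a set is unif SSD}. The only point requiring a line of care is that an elementary tensor in $U$ (resp. $U_s$) need not factor uniquely into unit vectors, so one must note that $\Phi(u)$ and the transferred data do not depend on the representation chosen; this is automatic, because the value of a functional on a fixed tensor is well defined, and in the complex case the identity $|P(\omega x)| = |P(x)|$ for $\omega \in \mathbb{T}$ reconciles the scalar ambiguity $\otimes^N(\omega x) = \omega^N \otimes^N x$. Beyond this routine bookkeeping, both implications are immediate.
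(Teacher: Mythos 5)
Your proposal is correct and is essentially the paper's own proof: the authors likewise reduce both parts to Proposition~\ref{thm BPBpp for a set is unif SSD} applied to the tensor product with the set $U$ (resp.\ $U_s$), and then invoke the same ``simple linearizing argument'' via the isometries $(X_1\pten\cdots\pten X_N)^*=\mathcal{L}(X_1\times\cdots\times X_N)$ and $(\sten X)^*=\mathcal{P}(^NX)$ to identify the BPBpp for the set of elementary tensors with the multilinear (resp.\ polynomial) BPBpp. Your write-up merely makes explicit the dictionary and the representation-independence point that the paper leaves implicit.
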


\begin{proof}
	A simple linearizing argument shows that $(X_1\times \cdots\times X_N, \K)$ has the BPBpp if and only if $(X_1\hat{\otimes}_\pi\cdots \hat{\otimes}_\pi X_N,\K)$ has the BPBpp for the set $U$ (recall the definition in item (b) of Proposition~\ref{thm BPBpp for a set is unif SSD}). Then the statement follows from Proposition~\ref{thm BPBpp for a set is unif SSD}. A similar argument can be applied in the polynomial context.
\end{proof}

When dealing with (non-necessarily uniform) strong subdifferentiability of tensor products, we have the analogous \emph{local} version of Proposition~\ref{thm BPBpp for bilinear and polynomial}. Recall the definition of the $N$-homogeneous polynomial $\Lpp$ in Definition~\ref{def Lpp and Loo}.

\begin{proposition} \label{local} Let $X, X_1, \ldots, X_N$ be Banach spaces.
	\begin{enumerate}
			\itemsep0.3em 
		\item[(i)] $X_1\hat{\otimes}_\pi\cdots \hat{\otimes}_\pi X_N$ is SSD on $U$ if and only if $(X_1\times \cdots \times X_N, \K)$ has the $\Lpp$.
		\item[(ii)] $\sten X$ is SSD on $U_s$ if and only if $(X, \K)$ has the $N$-homogeneous polynomial $\Lpp$.
	\end{enumerate}
\end{proposition}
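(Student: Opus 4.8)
The plan is to reduce Proposition~\ref{local} to Theorem~\ref{thm:FP} by exactly the linearization used for Proposition~\ref{thm BPBpp for bilinear and polynomial}, the only difference being that here the parameter $\eta$ is allowed to depend on the chosen point; hence, instead of localizing on the whole set $U$ (as in Proposition~\ref{thm BPBpp for a set is unif SSD}), I localize at a single point. First I would record the pointwise reformulation of the Franchetti--Pay\'a criterion: for a Banach space $Z$ and $u\in S_Z$, the norm of $Z$ is SSD at $u$ if and only if for every $\e>0$ there is $\eta(\e,u)>0$ such that every $\phi\in B_{Z^*}$ with $|\phi(u)|>1-\eta(\e,u)$ satisfies $\dist(\phi,D(u))<\e$. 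This is precisely Theorem~\ref{thm:FP}, except that the latter is stated with $\re\phi(u)>1-\eta$; to pass from the modulus to the real part one rotates, replacing $\phi$ by $\theta\phi$ for the unimodular $\theta$ with $\theta\phi(u)=|\phi(u)|$ and undoing the rotation on the approximating functional. Since $D(u)=\{\psi\in S_{Z^*}:\psi(u)=1\}$, the conclusion $\dist(\phi,D(u))<\e$ means exactly that there is $\psi\in S_{Z^*}$ with $|\psi(u)|=1$ and $\|\phi-\psi\|<\e$, i.e. the local Bishop--Phelps--Bollob\'as point property for functionals at $u$. Then the norm of $Z$ is SSD on $U$ precisely when this holds at every $u\in U$.

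For item (i) I take $Z=X_1\pten\cdots\pten X_N$ and use the isometry $Z^*=\mathcal{L}(X_1\times\cdots\times X_N)$. Every $u\in U$ is an elementary tensor $u=x_1\otimes\cdots\otimes x_N$ with $\|x_i\|=1$ (so $\|u\|_\pi=1$), and under the duality a functional $\phi\in Z^*$ corresponds to $A\in\mathcal{L}(X_1\times\cdots\times X_N)$ with $\|\phi\|=\|A\|$ and $\phi(u)=A(x_1,\dots,x_N)$. Thus the pointwise local BPBpp at $u$ from the previous paragraph translates verbatim into the defining condition of the $\Lpp$ at the point $(x_1,\dots,x_N)\in S_{X_1}\times\cdots\times S_{X_N}$: a norm-one $B$ with $|B(x_1,\dots,x_N)|=1$ and $\|B-A\|<\e$ is the multilinear avatar of $\psi\in D(u)$ with $\|\phi-\psi\|<\e$. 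Since the assignment $(x_1,\dots,x_N)\mapsto x_1\otimes\cdots\otimes x_N$ maps $S_{X_1}\times\cdots\times S_{X_N}$ onto $U$, quantifying over all $u\in U$ is the same as quantifying over all tuples, and the equivalence ``SSD on $U$ $\Leftrightarrow$ $\Lpp$'' follows. Item (ii) is identical after replacing $Z$ by $\sten X$, the isometry by $(\sten X)^*=\mathcal{P}(^N X)$, the set $U$ by $U_s=\{\otimes^N x:\|x\|=1\}$ (note $\|\otimes^N x\|_{\pi_s,N}=\|x\|^N=1$), and the multilinear form by the polynomial via $\langle\otimes^N x,P\rangle=P(x)$; the point $x_1\otimes\cdots\otimes x_N$ becomes the single elementary tensor $\otimes^N x$ and the matching property is the $N$-homogeneous polynomial $\Lpp$.

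I expect no serious obstacle: the content is entirely a transcription through the canonical isometries, so the work is bookkeeping. The one point requiring care is the passage between the modulus normalization $|\phi(u)|>1-\eta$ (resp. $|B(x_1,\dots,x_N)|=1$) used in the $\Lpp$ and the real-part normalization $\re\phi(u)>1-\eta$ in Theorem~\ref{thm:FP}; this is handled by the unimodular rotation described above and is the standard change of parameters already flagged in the text. One should also check that the correspondence $D(u)\leftrightarrow\{B:\|B\|=1,\ |B(x_1,\dots,x_N)|=1\}$ transfers norms isometrically, so that $\|\phi-\psi\|<\e$ becomes exactly $\|A-B\|<\e$; this is immediate from $Z^*=\mathcal{L}(X_1\times\cdots\times X_N)$ (resp. $(\sten X)^*=\mathcal{P}(^N X)$) being an isometric identification.
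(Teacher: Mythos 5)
Your proof is correct and follows essentially the same route as the paper: both arguments reduce the statement to the Franchetti--Pay\'a characterization (Theorem~\ref{thm:FP}) through the canonical isometric identifications $(X_1\pten\cdots\pten X_N)^*=\mathcal{L}(X_1\times\cdots\times X_N)$ and $(\sten X)^*=\mathcal{P}(^N X)$, under which $D(u)$ for $u\in U$ (resp.\ $U_s$) corresponds exactly to the norm-one forms (resp.\ polynomials) attaining the value one at the underlying point. The only difference is presentational --- the paper argues contrapositively with sequences while you argue directly with $\e$--$\eta$ quantifiers and spell out the modulus-versus-real-part rotation that the paper handles via its standing ``change of parameters'' remark.
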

\begin{proof}
	The proofs of item (i) and (ii) are analogous, hence we only prove (i).
	The pair \linebreak $(X_1\times \cdots\times X_N, \K)$ fails the $\Lpp$ if and only if there is $(x_1,\ldots,x_N)\in S_{X_1}\times\cdots \times S_{X_N}$ and a sequence of norm-one $N$-linear forms $L_n:X_1\times\cdots \times X_N\rightarrow \K$, such that 
	\begin{equation*}
	L_n(x_1,\ldots,x_N)\rightarrow 1 \quad \text{and} \quad \operatorname{dist}(L_n, D(x_1,\dots, x_N)) \not\rightarrow 0,
	\end{equation*}
	where $D(x_1,\dots, x_N) = \{L \in \mathcal{L}(X_1\times\cdots \times X_N,\K): L(x_1,\ldots,x_N)=\Vert L\Vert=1\}$. In terms of projective tensor products, this is equivalent to say that there is an element $u=x_1\otimes\cdots\otimes x_N \in U$ and a sequence of norm-one linear functionals $\varphi_n\in (X_1\hat{\otimes}_\pi\cdots \hat{\otimes}_\pi X_N)^*$ (each $\varphi_n$ is the functional associated to $L_n$) such that
	\begin{equation*}
	\varphi_n(u)\rightarrow 1 \quad \text{and} \quad \operatorname{dist}(\varphi_n, D(u)) \not\rightarrow 0.
	\end{equation*} 
	By \cite[Theorem~1.2]{FP}, this is equivalent to the norm of $X_1\hat{\otimes}_\pi\cdots \hat{\otimes}_\pi X_N$ not being SSD at $u$.
\end{proof}

\subsection{The proof of Theorem~\ref{theoremC}}\label{the proof of thm C}

Now we are ready to walk towards the proof of Theorem \ref{theoremC}. 
On the one hand, in Theorem~\ref{theorem:micro} we will prove that if $X, X_1,\dots, X_N$ are Banach spaces with micro-transitive norms (see the definition in the paragraph below) then $(X_1\times\cdots\times X_N, \K)$ and $(X,\K)$ have the multilinear and $N$-homogeneous polynomial BPBpp, respectively. Since Hilbert spaces have micro-transitive norms, this result together with Proposition~\ref{thm BPBpp for bilinear and polynomial} give items (i) and (iii) of Theorem~\ref{theoremC}. On the other hand, we will see that the pair $(c_0, \C)$ has the $2$-homogeneous $\Lpp$ in the complex case. In fact, we will prove a slightly stronger result with codomain a finite dimensional Hilbert space. This, together with Proposition~\ref{local}, prove item (ii). The item (iv) follows, analogously, from Proposition~\ref{local} and a result in \cite{ChoKim}, which states that 
$(c_0\times c_0, \C)$ has the bilinear $\Lpp$ in the complex case. Finally, item (v) follows from the fact that $\ell_1^N\pten Y=\ell_1^N(Y)$ and that $Y\oplus_1 Y$ is SSD if and only if $Y$ is SSD (see, for instance, \cite[Proposition~2.2]{FP}).

\subsubsection{The BPBpp on spaces with micro-transitive norms}
Given a Hausdorff topological group $G$ with identity $e$ and a Hausdorff space $T$, we say an action $G \times T \rightarrow T$ is micro-transitive if for every $x \in T$ and every neighborhood $U$ of $e$ in $G$, the orbit $Ux$ is a neighborhood of $x$ in $T$. In terms of Banach spaces, we say that the norm of a Banach space is \emph{micro-transitive} if its group of surjective isometries acts micro-transitively on its unit sphere. Equivalently, we have that the norm of a Banach space is micro-transitive if and only if there is a function $\beta : (0,2) \rightarrow \mathbb{R}^+$ such that if $x,y \in S_X$ satisfy $\|x - y\| < \beta(\eps)$, then there is a surjective isometry $T \in \mathcal{L} (X, X)$ satisfying $T(x) =y$ and $\|T-\id \| < \e$ (see \cite[Proposition 2.1]{CDKKLM}).

\begin{theorem} \label{theorem:micro}
	Let $X, X_1,\dots, X_N$ be Banach spaces with micro-transitive norms and $Z$ an arbitrary Banach space. Then the following results hold. 
	\begin{itemize}
	\itemsep0.3em
		\item[(i)] The pair $(X_1\times\cdots\times X_N, Z)$ has the BPBpp.
		\item[(ii)] The pair $(X, Z)$ has the $N$-homogeneous polynomial BPBpp.
	\end{itemize}
	
\end{theorem}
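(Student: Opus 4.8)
The plan is to use a single \emph{peaked rank-one} additive perturbation whose perturbing direction is the value $a:=A(x_1,\dots,x_N)$ itself; this is exactly what makes the argument insensitive to the geometry of the \emph{arbitrary} range $Z$. First I would normalize so that $\|A\|=1$ (allowed by the normalization remark preceding the statement), so that $1-\eta<\|a\|\le 1$. For each $i$ I choose, by Hahn--Banach, a functional $f_i\in S_{X_i^*}$ with $f_i(x_i)=1$, and set $P(u_1,\dots,u_N):=\prod_{i=1}^N f_i(u_i)$, a norm-one scalar $N$-linear form that ``peaks'' at $(x_1,\dots,x_N)$. Then I define
\[
B(u_1,\dots,u_N):=A(u_1,\dots,u_N)+P(u_1,\dots,u_N)\Big(\tfrac{a}{\|a\|}-a\Big).
\]

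Immediately $B(x_1,\dots,x_N)=a+\big(\tfrac{a}{\|a\|}-a\big)=\tfrac{a}{\|a\|}$, so $\|B(x_1,\dots,x_N)\|=1$, and since $|P|\le 1$ on the product of unit balls,
\[
\|B-A\|\le\Big\|\tfrac{a}{\|a\|}-a\Big\|=1-\|a\|<\eta .
\]
Thus the only thing left is to force $\|B\|=1$, which then makes $(x_1,\dots,x_N)$ a genuine norming tuple. Micro-transitivity enters precisely here: since a micro-transitive norm is uniformly convex (see \cite{CDKKLM}), each $f_i$ strongly exposes $x_i$ with a modulus depending only on $\varepsilon$. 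Consequently, whenever $|P(u_1,\dots,u_N)|$ is close to $1$ the tuple $(u_1,\dots,u_N)$ must be close (after unimodular rotations $\theta_i$) to $(x_1,\dots,x_N)$, and then $A(u_1,\dots,u_N)$ is close to $\big(\prod_i\theta_i\big)a$ by multilinear continuity. Writing $B(\mathbf u)=\big(A(\mathbf u)-P(\mathbf u)\,a\big)+P(\mathbf u)\tfrac{a}{\|a\|}$ and splitting into the region where $|P(\mathbf u)|$ is near $1$ (where the first bracket is small and the second has norm $\le 1$) and its complement (where $|P(\mathbf u)|$, hence the whole perturbation, is uniformly small), I would estimate $\|B(\mathbf u)\|\le 1$ on the entire unit ball.

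The main obstacle is exactly this norm-control step: the crude triangle-inequality bound only gives $\|B\|\le 1+\eta$, and turning the heuristic above into an \emph{exact} inequality $\|B\|\le 1$ for an arbitrary range $Z$ is the delicate part. I would make it quantitative using the uniform modulus of convexity of the $X_i$ together with the micro-transitivity function $\beta$ to control the measure of the ``bad'' region; if the estimate only yields $\|B\|\le 1+\omega(\eta)$ with $\omega(\eta)\to 0$, I would upgrade to exact attainment either by a limiting/iteration argument (shrinking $\eta$ and passing to a limit) or by composing $B$ with near-identity surjective isometries supplied by $\beta$ to absorb the residual defect while keeping $\|B-A\|<\varepsilon$. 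The polynomial statement (ii) is handled by the identical scheme with $P(u):=f(u)^N$ for $f\in S_{X^*}$ with $f(x)=1$, using the elementary estimate $\|\otimes^N u-\otimes^N x\|\le \tfrac{N^{N+1}}{N!}\|u-x\|$ (as in Lemma~\ref{NA-symm1}) to transport the domain estimates to the symmetric setting, so that no separate symmetrization is needed. Finally, since the modulus of convexity and the function $\beta$ are uniform over the spheres, the resulting $\eta$ depends only on $\varepsilon$ (and $N$), which is precisely the \emph{uniform} conclusion demanded by the BPBpp.
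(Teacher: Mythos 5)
Your construction stalls exactly where you say it does, and the two repairs you offer do not close the gap. The additive rank-one perturbation $B=A+P(\cdot)\bigl(\tfrac{a}{\|a\|}-a\bigr)$ cannot be forced to satisfy $\|B\|\le 1$ for an arbitrary range $Z$: on the region where $|P(\mathbf u)|$ is bounded away from $1$, the operator $A$ may still almost attain its norm (nothing prevents $\|A(\mathbf u)\|\approx 1$ at tuples far from $(x_1,\dots,x_N)$ where $P(\mathbf u)\ne 0$), so the best available bound there is $\|B(\mathbf u)\|\le 1+(1-\delta)(1-\|a\|)>1$; and on the region where $|P(\mathbf u)|\approx 1$ the uniform-convexity estimate is additive, giving only $\|B(\mathbf u)\|\le 1+C\sum_i\|u_i-\theta_i x_i\|$, again strictly above $1$. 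So the honest conclusion is $\|B\|\le 1+\omega(\eta)$ with $\omega(\eta)\to0$, and neither proposed repair upgrades this to equality. Iteration fails because the moduli produced by uniform convexity behave like roots ($\omega(t)\sim t^{1/2}$ or worse), and the recursion $\eta_{n+1}=\omega(\eta_n)$ then does not tend to $0$ (it tends to a fixed point near $1$), so the successive corrections neither shrink nor sum. Composing with surjective isometries fails for a structural reason: isometries preserve the operator norm, so they cannot ``absorb'' a defect $\|B\|=1+\omega>1$; they can only relocate a norming point that already exists, and for an arbitrary $Z$ your $B$ need not attain its norm anywhere. This is the same obstruction that makes BPB-type theorems for operators require extra structure on the range, such as property $\beta$ in Proposition~\ref{propertybeta}.

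The paper's proof avoids the hand-made perturbation altogether. Micro-transitivity implies that the domains are uniformly convex (\cite[Corollary~2.13]{CDKKLM}), hence the pair has the \emph{full} Bishop--Phelps--Bollob\'as property, by \cite[Theorem~2.2]{ABGM} in the multilinear case and \cite[Theorem~3.1]{Acoet6} in the polynomial case; this black box produces a norm-one $\widetilde B$ with $\|\widetilde B-A\|<\e/3$ that attains its norm at some \emph{nearby} tuple $(\tilde x_1,\dots,\tilde x_N)$. Micro-transitivity is then used for what it actually provides: surjective isometries $T_i$ with $T_i(x_i)=\tilde x_i$ and $\|T_i-\id\|$ small, so that $B:=\widetilde B\circ(T_1,\dots,T_N)$ has norm one, attains it at the \emph{original} point $(x_1,\dots,x_N)$, and satisfies $\|B-A\|<\e$. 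In other words, the isometries move the norming point back to the prescribed point rather than repair a norm excess; your proposal would become a proof if you replaced the perturbation step by this appeal to the known Bishop--Phelps--Bollob\'as theorems for uniformly convex domains.
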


\begin{proof} Let us first prove item (i). For simplicity, we prove the case $N=2$. By \cite[Collorary 2.13]{CDKKLM}, $X_1$ and $X_2$ are uniformly convex (also, uniformly smooth). Then, it follows from \linebreak \cite[Theorem 2.2]{ABGM} that $(X_1 \times X_2, Z)$ has the Bishop-Phelps-Bollob\'{a}s property with $\eps \mapsto \eta(\eps)$. Let $A \in \mathcal{L}(X_1\times X_2, Z)$ with $\|A \| =1$ and $\|A (x_0, y_0 )\| > 1 -\eta'(\eps)$ for some $x_0 \in S_{X_1}$ and $y_0 \in S_{X_2}$, where 
	\begin{equation*}
	\eta'(\eps)= \eta\left( \min\left\{ \frac{\eps}{3}, \beta_{X_1} \left(\frac{\eps}{3}\right), \beta_{X_2} \left(\frac{\eps}{3}\right) \right\} \right).
	\end{equation*}
	Here, $\beta_{X_1}, \beta_{X_2} : (0,2) \rightarrow \R^+$ are functions induced from the micro-transitivity of the norms $X_1$ and $X_2$, respectively. Then there are $\widetilde{B} \in \mathcal{L}(X_1 \times X_2, Z)$ and $(\tilde{x_0}, \tilde{y_0}) \in S_{X_1} \times S_{X_2}$ such that 
	\begin{enumerate}
		\itemsep0.3em 
		\item $\| \widetilde{B} \| = \| \widetilde{B} (\tilde{x_0}, \tilde{y_0}) \| = 1,$ 
		\item $\max\{\|x_0-\tilde{x_0}\|, \|y_0-\tilde{y_0}\|\} < \min\left\{ \beta_{X_1} \left(\frac{\eps}{3}\right), \beta_{X_2} \left(\frac{\eps}{3}\right) \right\},$
		\item $\| \widetilde{B} - A \| < \frac{\eps}{3}$.
	\end{enumerate} 
	Let $T_1 \in \mathcal{L}(X_1, X_1)$ and $T_2 \in \mathcal{L}(X_2, X_2)$ be surjective isometries such that
	$$
	T_1(x_0) = \tilde{x_0} \quad \text{and} \quad \|T_1 -\id_{X_1} \| < \frac{\e}{3},
	$$
and that 
$$
T_2(y_0) = \tilde{y_0}\quad \text{and} \quad\|T_2 -\id_{X_2} \| < \frac{\e}{3}.
$$
Define $B(x,y) = \widetilde{B} (T_1(x), T_2(y))$ for every $(x, y) \in X_1 \times X_2$. Clearly, $B \in \mathcal{L}(X_1\times X_2, Z)$ and $\|B \| \leq 1$. Note that 
	\begin{equation*}
	\|B(x_0, y_0)\| = \|\widetilde{B} (T_1(x_0), T_2(y_0))\| = \|\widetilde{B} (\tilde{x_0}, \tilde{y_0}) \| = 1
	\end{equation*} 
	and, hence, $\|B\| = 1$. Also, 
	\begin{align*}
	\|A(x,y) - B(x,y) \| &\leq \| A(x,y) - \widetilde{B}(x, y) \| + \| \widetilde{B}(x, y) - \widetilde{B}(x, T_2(y))\| + \| \widetilde{B}(x, T_2(y)) - \widetilde{B}(T_1(x), T_2(y))\| \\
	&< \frac{\eps}{3} + \| y - T_2(y)\| + \| x - T_1(x) \| \\
	&< \eps 
	\end{align*} 
	for every $(x,y) \in S_{X_1}\times S_{X_2}$. This shows that $\|A - B\| \leq \eps$ and completes the proof. 
	
	The proof of (ii) follows the same line of the previous one. Using again the fact that a Banach space $X$ with micro-transitive norm is uniformly convex and taking \cite[Theorem~3.1]{Acoet6} into account, we deduce that $(X,Z)$ has the Bishop-Phelps-Bollob\'as property. Hence, if \linebreak $P \in \mathcal{P}(^NX, Z)$ with $\|P \| =1$ is such that $$\|P(x_0)\| > 1 - \eta\left( \min\left\{ \frac{\eps}{N+1}, \beta_{X} \left(\frac{\eps}{N+1}\right) \right\} \right)$$ for some $x_0 \in S_{X}$, then there exist $\widetilde{Q}\in \mathcal{P}(^NX, Z)$, $\|\widetilde{Q}\|=1$, and $\tilde{x_0}\in S_X$ such that
	$$
	\|\widetilde{Q}(\tilde{x_0})\|=1, \quad \|x_0-\tilde{x_0}\|<\frac{\e}{N+1}, \quad \text{and}\quad \|P-\widetilde{Q}\|<\frac{\e}{N+1}.
	$$
	Finally, letting $T\in \mathcal{L}(X,X)$ be a surjective isometry such that $T(x_0)=\tilde{x_0}$ and \linebreak $\|T-\id_{X}\|<\frac{\e}{N+1}$, we consider $Q(x)=\widetilde{Q}(T(x))$ which attain its norm at $x_0\in S_X$ and approximates $P$. The details are left to the reader.
\end{proof}

The previous theorem together with Proposition~\ref{thm BPBpp for bilinear and polynomial} shows that if $X, X_1,\dots, X_N$ are Banach spaces with micro-transitive norms, then $X_1\hat{\otimes}_\pi\cdots \hat{\otimes}_\pi X_N$ is USSD on $U$ and $\sten X$ is USSD on $U_s$. It is an open problem the existence of Banach spaces, other than Hilbert spaces, having micro-transitive norms.

\subsubsection{The $\Lpp$ for $2$-homogeneous polynomials on $c_0$} 
In order to prove the $2$-homogeneous polynomial $\Lpp$ for the pair $(c_0, H)$, with $H$ a finite dimensional complex Hilbert space, we need the following key result which is motivated by \cite[Proposition 2]{ABC}. For a subset $A$ of $\mathbb{N}$, let us denote by $P_A$ the natural projection from $c_0$ onto $\ell_\infty^A$. 

\begin{proposition}\label{prop:ABC}
Consider the complex space $c_0$ and a complex Hilbert space $H$. Given \linebreak$x_0\in S_{c_0}$ and $\eps >0$,  there exists $\eta(\eps, x_0)>0$ such that $\|P - P\circ P_A\| < \eps$  for any  $P \in \mathcal{P}(^2 c_0, H)$ with $\| P \|=1$ and $\|P(x_0)\| > 1-\eta(\eps,x_0)$, where $A:=\{i\in\N:\,\, |x_0(i)|=1\}$.
\end{proposition}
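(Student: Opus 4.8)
The plan is to turn the operator-norm estimate into a statement about the symmetric bilinear map $\check{P}$ associated with $P$ (so that $P(x)=\check{P}(x,x)$), and then to extract the required smallness from the near-maximality of $P$ at $x_0$ by a one-variable complex-analytic computation together with phase-averaging in $H$.

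First I would record the structural facts. Since $x_0\in c_0$ with $\|x_0\|=1$, the set $A=\{i:|x_0(i)|=1\}$ is finite and nonempty and, crucially, $\rho:=\sup_{i\notin A}|x_0(i)|<1$; set $r:=1-\rho>0$, $B:=\N\setminus A$, and let $P_B$ be the projection onto the $B$-coordinates. Splitting $x=a+b$ with $a=P_Ax$ and $b=P_Bx$, the purely $A$-diagonal part $\check{P}(a,a)=P(P_Ax)$ cancels, giving $P(x)-P(P_Ax)=2\check{P}(a,b)+P(b)$. As every $x\in B_{c_0}$ produces $a,b\in B_{c_0}$ with disjoint supports, the statement reduces to showing
\[
\sup\bigl\{\,\|2\check{P}(a,b)+P(b)\|:\ a\in B_{\ell_\infty^A},\ b\in B_{c_0},\ \supp b\subseteq B\,\bigr\}<\eps .
\]

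The engine of the proof is an averaging inequality. For $b$ as above and $|s|\le r$ the vector $x_0+sb$ stays in $B_{c_0}$ (this is precisely where $\rho<1$ enters, since the $B$-coordinates of $x_0$ never exceed $\rho$), so $\|P(x_0+sb)\|\le 1$. Writing $P(x_0+sb)=P(x_0)+2s\,\check{P}(x_0,b)+s^2P(b)$ with $s=re^{i\theta}$ and averaging $\|\cdot\|^2$ over $\theta$, the orthogonality of $1,e^{i\theta},e^{2i\theta}$ yields
\[
\|P(x_0)\|^2+4r^2\|\check{P}(x_0,b)\|^2+r^4\|P(b)\|^2\le 1 .
\]
Since $\|P(x_0)\|^2>1-2\eta$, this gives at once $\|P(b)\|<r^{-2}\sqrt{2\eta}$ and $\|\check{P}(x_0,b)\|<(2r)^{-1}\sqrt{2\eta}$ for every admissible $b$, so the diagonal block $P\circ P_B$ and the $x_0$-weighted cross term become uniformly small once $\eta=\eta(\eps,x_0)$ is chosen small enough.

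It remains to bound the cross term $\check{P}(a,b)$ for \emph{arbitrary} $a$, and this is where I expect the real difficulty. Since $a\mapsto\|2\check{P}(a,b)+P(b)\|$ is convex and $B_{\ell_\infty^A}$ is the closed convex hull of the finite torus $\mathbb{T}^A$, it suffices to control $\check{P}(a,b)$ for $a\in\mathbb{T}^A$, that is, to control each coordinate cross term $\check{P}(e_i,b)$ with $i\in A$. The obstacle is that near-maximality at the single point $x_0$ only sees the one combination $\sum_{i\in A}x_0(i)\,\check{P}(e_i,b)$, whereas I need each summand separately. The natural idea — and, I expect, the heart of the argument, modelled on \cite[Proposition~2]{ABC} — is to exploit the one feature peculiar to $c_0$: all coordinates indexed by $A$ have modulus $1$, so their phases may be rotated independently while remaining on $S_{c_0}$. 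Feeding such phase-rotated peak vectors into the averaging inequality and recombining, together with the finite dimensionality of $H$, should disentangle the individual coefficients $\check{P}(e_i,b)$ and force them to be small. Making this recombination quantitative and uniform in $b$ is the step I expect to be the main obstacle, and the point at which the complex scalars and the Hilbert-space geometry are genuinely used.
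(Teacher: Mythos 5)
Your reduction and your averaging inequality are correct, and up to that point you are on the paper's track, in fact more cleanly: where the paper extracts the smallness of $P(b)$ and of $\check{P}(x_0,b)$ in two stages by choosing phases $\lambda$ that make certain inner products purely imaginary, your single circle-average over $s=re^{i\theta}$ gives the Pythagorean bound $\|P(x_0)\|^2+4r^2\|\check{P}(x_0,b)\|^2+r^4\|P(b)\|^2\le 1$ in one stroke (this is where the Hilbert-space structure of $H$ enters, exactly as in the paper).

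The problem is that your proof stops precisely where the paper's real work begins, and the mechanism you gesture at for the missing step would fail. The averaging inequality yields smallness of $\|\check{P}(v,b)\|$ only when its center $v$ satisfies $\|P(v)\|\approx 1$, and near-maximality at $x_0$ transfers no lower bound on $\|P\|$ to the phase-rotated points you propose to feed into it. Concretely, take $P(x)=\tfrac12(x_1^2+x_2^2)$ and $x_0=(1,1,0,\dots)$: then $\|P(x_0)\|=1$, but rotating the second coordinate gives $\|P(1,e^{i\theta},0,\dots)\|=|\cos\theta|$, which is not close to $1$; so the averaging inequality centered at such points says nothing about $\check{P}(e_2,b)$. (In this example $P=P\circ P_A$, so the conclusion is trivially true, but it shows that independent phase rotation of the $A$-coordinates does not preserve near-maximality and hence cannot by itself ``disentangle'' the coefficients.)

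What the paper does instead is genuinely different and substantially harder. It first shows $\|P\circ P_A\|>1-\beta_3(\eps_0)$, passes to an \emph{exact} maximizer $u$ of $P\circ P_A$ on the finite-dimensional $\ell_\infty^A$, uses the maximum modulus principle and a change of variables to normalize $u=(1,\dots,1,0,\dots)$, and only at this genuine maximizer re-runs the cross-term estimate to get $\|\check{P}(u,\cdot)\|$ small. To separate the individual cross terms it then decomposes $\ell_\infty^A$ along the basis $z_1=(1,\dots,1)$, $z_j=(1,\dots,1,-n+1,1,\dots,1)$, so that only the combinations $\tfrac1n(x_1+\cdots+x_n)\check{P}(\overline{z_1},\cdot)$ and $\tfrac1n(x_1-x_j)\check{P}(\overline{z_j},\cdot)$ appear; the $z_1$-term is controlled by $\check{P}(u,\cdot)$, and for each $j\ge 2$ one evaluates $P$ along $\theta\mapsto(1,\dots,e^{i\theta},\dots,1,0,\dots)$ and must compare the deficit $\|P(u)\|^2-\|P(1,\dots,e^{i\theta},\dots)\|^2$ against the prefactor $|1-e^{i\theta}|^2$ multiplying $\|\psi_j\|^2$. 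Since both quantities vanish to second order in $\theta$, this comparison is the true crux; the paper carries it out by a L'H\^opital-type analysis exploiting that $\theta=0$ is a local maximum (a step which is delicate enough that its justification in the paper deserves close scrutiny --- the example above shows the ratio need not tend to $0$ in general, only in the presence of the cross term one is trying to bound). Nothing in your sketch substitutes for this second-order analysis, so the proposal is a correct first half together with an unproven claim for exactly the half that makes the proposition nontrivial.
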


\begin{proof}
Suppose that $\|P(x_0)\| > 1 -\eps_0$. We will see that $\eps_0 >0$ can be chosen depending on $x_0$ and $\eps>0$ in such a way that $\| P - P\circ P_A\| < \eps$. 
For simplicity, and without loss of generality, we will suppose that $A=\{1,\dots,n\}$. Now, consider 
$$
y=(0,\dots,0, y_{n+1}, y_{n+2}, \dots)\in B_{c_0}.
$$
Then, for every $\lambda\in \C$, $|\lambda|= 1- \max \{ |x_0 (i)| : i > n\}$, we have
\begin{equation}\label{eq pol+norm one}
\| P(x_0)\pm 2\check{P}(x_0, \lambda y) + \lambda^2 P(y) \| = \|P(x_0\pm \lambda y)\|\leq 1.
\end{equation}
Then,
$$
\| P(x_0)+\lambda^2 P(y) \| \leq 1.
$$
Note that 
$$
\| P(x_0)\|^2 + 2 \text{Re} \langle P(x_0), \lambda^2 P(y) \rangle + |\lambda|^4 \|P(y)\|^2 = \| P(x_0)+\lambda^2 P(y) \|^2 \leq 1,
$$
where $\langle \cdot, \cdot \rangle$ is the inner product on $H$. By choosing $\lambda$ so that $\langle P(x_0), \lambda^2 P(y) \rangle$ is purely imaginary we deduce
$$
\left(\| P(x_0)\| ^2 + |\lambda|^4 \| P(y)\|^2\right)^{\frac{1}{2}} \leq 1
$$
Since $\| P(x_0) \| >1-\e_0$, we have
$$
\left((1-\e_0)^2 +|\lambda|^4 \| P(y)\|^2 \right)^{\frac{1}{2}} \leq 1
$$
and, consequently, 
\begin{equation}\label{estimation A comp}
\|P(y)\|\leq \frac{\sqrt{2\e_0-\e_0^2}}{|\lambda|^2} =: \beta_1 (\e_0). 
\end{equation}
Now, returning to \eqref{eq pol+norm one}, 
$$
\|P(x_0) + 2\lambda \check{P}(x_0, y)\| - \|\lambda^2 P(y)\| \leq \|P(x_0)\pm 2\check{P}(x_0, \lambda y) + \lambda^2 P(y)\| \leq 1.
$$
Then, from the estimate \eqref{estimation A comp}, we deduce that
$$
\|P(x_0)+2\lambda \check{P}(x_0,y)\| \leq 1+ |\lambda|^2 \beta_1(\e_0). 
$$
Note again that 
$$
\| P(x_0)\|^2 + 2\text{Re} \langle P(x_0), 2\lambda \check{P}(x_0, y)\rangle + 4|\lambda|^2 \|\check{P}(x_0, y)\|^2  \leq (1+ |\lambda|^2 \beta_1(\e_0))^2. 
$$
Choosing $\lambda$ so that $\langle P(x_0), 2\lambda \check{P}(x_0, y)\rangle$ is purely imaginary we have
$$
\|P(x_0)\|^2 + 4|\lambda|^2 \| \check{P}(x_0,y) \|^2 \leq \left( 1 + |\lambda|^2 \beta_1(\e_0)\right)^2,
$$
from where we deduce (using again the fact that $\|P(x_0)\|>1-\e_0$) that
\begin{equation}\label{eq beta2}
\|\check{P}(x_0,y)\|< \frac{1}{2|\lambda|} \left( \left( 1 + |\lambda|^2 \beta_1 (\e_0)\right)^2 - (1-\e_0)^2 \right)^{\frac{1}{2}} =: \beta_2 (\e_0).
\end{equation}

Given any $x\in c_0$, let us call $x^A=P_A(x)=(x_1,\dots, x_n, 0,\dots)$ and $x^{A^c}=(0,\dots,0, x_{n+1}, x_{n+2},\dots)$.
Note that
$$
P(x_0^A)=P(x_0-x_0^{A^c})=P(x_0)-2\check{P}(x_0, x_0^{A^c})+P(x_0^{A^c})
$$
and, hence,
$$
1-\e_0 < \|P(x_0)\|=\|P(x_0^A)+2\check{P}(x_0, x_0^{A^c})-P(x_0^{A^c})\|<\|P(x_0^A)\| + 2\beta_2 (\eps_0) + \beta_1 (\e_0).
$$
This gives $\|P(x_0^A)\|>1-\e_0 - 2\beta_2(\e_0) - \beta_1 (\eps_0) =: 1-\beta_3 (\eps_0)$. In particular, this shows that \linebreak $\| P \circ P_A \| > 1 - \beta_3 (\eps_0)$. 
Using the finite dimensionality of $\ell_\infty^A$, we take $u =(u_1,\ldots, u_n, 0,0,\ldots)$ an element of $S_{c_0}$ such that $\| (P \circ P_A)(u)\| = \| P \circ P_A\|$. Applying the maximum modulus principle, we may assume that $|u_1|=\cdots=|u_n|=1$. Moreover, by a simple change of variables we may assume $u=(1,\ldots,1,0,0,\ldots)$. 

Using the fact that $\|P(u)\| > 1-\beta_3(\eps_0)$ and arguing as in \eqref{eq beta2}, we can prove that \linebreak $\|\check{P}(u^A, y)\|<\beta_4 (\e_0)$ for every $y=(0,\dots,0, y_{n+1}, \dots)\in B_{c_0}$ for some $\beta_4 (\eps_0) >0$ satisfying that $\beta_4(\eps_0) \rightarrow 0$ as $\eps_0 \rightarrow 0$.

Now, let us consider the basis of $\C^n$,
\begin{eqnarray*}
z_1&=&(1,\dots, 1)\\
z_2&=&(1, -n+1, 1,\dots, 1)\\
&\vdots&\\
z_n&=&(1,\dots,1, -n+1)
\end{eqnarray*}
and $\overline{z_j}=(z_j, 0,\dots)\in c_0$. Given $(x_1,\dots, x_n)\in \C^n$ it can be checked that
\begin{equation}\label{basis}
(x_1,\dots, x_n)=\frac{1}{n}(x_1+\cdots+x_n)z_1 + \frac{1}{n}\sum_{j=2}^n(x_1-x_j)z_j.
\end{equation}
Now, for any $(x_1,\dots,x_n)\in B_{\ell_\infty^n}$ and any $y=(0,\dots,0, y_{n+1},\dots)\in B_{c_0}$,
$$
P(x_1,\dots, x_n, y_{n+1},\dots)=P(x_1,\dots, x_n,0\dots)+ 2\check{P}((x_1,\dots, x_n,0\dots), y) + P(y).
$$
In virtue of \eqref{basis},
$$
\check{P}((x_1,\dots, x_n,0\dots), y) = \frac{1}{n}(x_1+\cdots+x_n) \check{P}(\overline{z_1}, y)+\frac{1}{n}\sum_{j=2}^n(x_1-x_j) \check{P}(\overline{z_j}, y).
$$
If we call $\psi_j(\cdot)=\frac{2}{n}\check{P}(\overline{z_j}, \cdot)$, then we have
\begin{equation}\label{eq psi}
P(x_1,\dots, x_n, y_{n+1},\dots)=P(x_1,\dots, x_n,0\dots) + (x_1+\cdots+x_n)\psi_1(y) + \sum_{j=2}^n(x_1-x_j)\psi_j(y) + P(y).
\end{equation}
There is a little abuse of notation: when we write $\psi_j(y)$, we should write $\psi_j(y_{n+1}, y_{n+2}, \dots)$ (however we will keep this notation).

Let us prove that $\|\psi_j\|$ is small for $j=2,\dots,n$. We will see the case $j=2$, being the others analogous. From equation \eqref{eq psi}, choosing $x_2=e^{i\theta}$ (for any real $\theta$) and $x_j=1$ if $j\neq 2$, we deduce
$$
P(1, e^{i\theta}, 1,\dots, 1, 0,\dots) + (1-e^{i\theta})\psi_2(y)=P(1, e^{i\theta}, 1,\dots, 1, y_{n+1},\dots) - (n-1+e^{i\theta})\psi_1(y)-P(y).
$$
Then,
$$
\| P(1, e^{i\theta}, 1,\dots, 1, 0,\dots) + (1-e^{i\theta})\psi_2(y) \| \leq 1 + n\|\psi_1(y)\|+\|P(y)\|< 1 + 2\beta_4 (\e_0)+\beta_1 (\e_0),
$$
that is,
\begin{align*}
\| P(1, e^{i\theta}, 1,\dots, 1, 0,\dots) \|^2 + 2 \text{Re} \langle  &P(1, e^{i\theta}, 1,\dots, 1, 0,\dots) , (1-e^{i\theta})\psi_2(y)\rangle \\
&\qquad + \| (1-e^{i\theta})\psi_2(y)\|^2 \leq (1 + 2\beta_4 (\e_0)+\beta_1 (\e_0))^2. 
\end{align*} 
As we can vary the argument of $y$ (independent of $\theta$), we deduce that
$$
\|P(1, e^{i\theta}, 1,\dots, 1, 0,\dots)\|^2 + \|(1-e^{i\theta})\psi_2(y)\|^2 \leq (1 + 2\beta_4 (\e_0)+\beta_1 (\e_0))^2
$$
and, since it holds for every $y\in B_{c_0}$, then
$$
\|P(1, e^{i\theta}, 1,\dots, 1, 0,\dots) \|^2 + |1-e^{i\theta}|^2 \,\|\psi_2\|^2 \leq (1 + 2\beta_4 (\e_0)+\beta_1 (\e_0))^2. 
$$
Then, 
\begin{align*}
|1-e^{i\theta}|^2 \,\|\psi_2\|^2 &\leq (1 + 2\beta_4 (\e_0)+\beta_1 (\e_0))^2 - \|P(1, e^{i\theta}, 1,\dots, 1, 0,\dots)\|^2 \\
&= \underbrace{1-\|P(1,\dots,1,0,\dots)\|^2}_{(I)} \\
&\qquad + \underbrace{\|P(1,\dots,1,0,\dots)\|^2 - \|P(1,e^{i\theta},1,\dots,1,0,\dots)\|^2}_{(II)} + \beta_5 (\eps_0).
\end{align*}
where $1+\beta_5 (\eps_0) := (1 + 2\beta_4 (\e_0)+\beta_1 (\e_0))^2$. 
Given that $\|P(1,\dots,1,0,\dots)\|=\| P \circ P_A \| >1-\beta_3 (\eps_0)$, then $(I)< 2\beta_3 (\eps_0) - \beta_3 (\eps_0)^2$.

Define $f(\theta)= \|P(1,\ldots, 1, 0,\ldots)\|^2 - \|P(1, e^{i\theta}, 1,\dots, 1, 0,\dots)\|^2$ and $g(\theta)=|1-e^{i\theta}|$ and note that $g(\theta)=2\sin(\theta/2)$ for $\theta\geq 0$. It is worth noting that $\theta \mapsto \|P(1, e^{i\theta}, 1,\dots, 1, 0,\dots)\|$ is differentiable at $\theta=0$ (because $P$ is holomorphic and $P(1, e^{i\theta}, 1,\dots, 1, 0,\dots) \not\to 0$ when $\theta \to 0$).
Note from L'H$\hat{\text{o}}$pital's rule that 
\begin{equation}\label{limit lhopital}
\lim_{\theta\to 0^+} \frac{f(\theta)}{g(\theta)^2}=  \lim_{\theta\to 0^+} \frac{-2 \|P(1, e^{i\theta}, 1,\dots, 1, 0,\dots)\| \big(\frac{d}{d\theta} \|P(1, e^{i\theta}, 1,\dots, 1, 0,\dots)\| \big)}{4\sin(\theta/2)\cos(\theta/2)}.
\end{equation}
On the one hand it is clear that 
$$
\lim_{\theta\to 0^+} \frac{\frac{d}{d\theta} \|P(1, e^{i\theta}, 1,\dots, 1, 0,\dots)\|}{\cos(\theta/2)}=0
$$
since $ \|P(1, e^{i\theta}, 1,\dots, 1, 0,\dots)\|$ has a local maximum at $\theta=0$ and, consequently, $$\frac{d}{d\theta} \|P(1, e^{i\theta}, 1,\dots, 1, 0,\dots)\|_{\big|_{\theta=0}}=0.$$ On the other hand, applying again  L'H$\hat{\text{o}}$pital's rule we have
$$
\lim_{\theta\to 0^+} \frac{\|P(1, e^{i\theta}, 1,\dots, 1, 0,\dots)\|}{\sin(\theta/2)}=\lim_{\theta\to 0^+} \frac{2 \frac{d}{d\theta} \|P(1, e^{i\theta}, 1,\dots, 1, 0,\dots)\|}{\cos(\theta/2)}=0
$$
Then, going back to \eqref{limit lhopital} we obtained that
\begin{equation}\label{limit}
\lim_{\theta\to 0^+} \frac{f(\theta)}{g(\theta)^2}=0.
\end{equation}
Take $\theta_0 >0$ such that $|1-e^{i\theta_0}|^2 = 4 \sin^2 (\theta_0/2) = \gamma(\eps_0) := (2\beta_3 (\eps_0) - \beta_3(\eps_0)^2 + \beta_5 (\eps_0))^{\frac{1}{2}}$. 
Then we obtain
$$
\gamma (\e_0) \|\psi_2\|^2\leq \gamma(\eps_0)^2 + f(2 \arcsin (\gamma(\eps_0)^{1/2}/2)) 
$$
from where we deduce that
$$
\|\psi_2\|^2\leq \gamma(\eps_0) + \frac{f(2 \arcsin (\gamma(\eps_0)^{1/2}/2)) }{\gamma(\eps_0)}\xrightarrow[\eps_0 \to 0]{} 0.
$$
The limit 
$$
 \frac{f(2 \arcsin (\gamma(\eps_0)^{1/2}/2)) }{\gamma(\eps_0)}\xrightarrow[\eps_0 \to 0]{} 0
$$
follows from \eqref{limit} and the fact that 
$$
 \frac{f(2 \arcsin (\gamma(\eps_0)^{1/2}/2)) }{\gamma(\eps_0)}=\frac{f(\theta_0)}{g(\theta_0)^2}
$$
and $\theta_0$ goes to $0$ as $\eps_0 \to 0$.
As we already mentioned, we can obtain the same bounds for $\psi_3,\dots, \psi_n$. Then, looking at \eqref{eq psi}, we conclude that there is some $\eps_0 = \eta(\eps, x_0) >0$ such that 
$$
\|P(x_1,\dots, x_n, y_{n+1},\dots)-P(x_1,\dots, x_n,0\dots)\|< \eps.
$$
for every $(x_1,\dots, x_n)\in B_{\ell_\infty^n}$ and $y=(0,\dots,0,y_{n+1}, \dots)\in B_{c_0}$. This proves the statement.
\end{proof}

Now we are ready to state and prove the main result of this subsection. Before that, let us simply note that if $X, Y$ are finite dimensional Banach spaces, then the pair $(X, Y)$ has the $2$-homogeneous polynomial $\Lpp$. Indeed, it follows by contradiction using the compactness of the unit ball  $B_{\mathcal{P}(^2X, Y)}$.

\begin{theorem} \label{c0-result}
For a finite dimensional Hilbert space $H$, the pair $(c_0, H)$ has the $2$-homoge\-neous polynomial $\Lpp$ in the complex case. 
\end{theorem}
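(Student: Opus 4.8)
The plan is to deduce the statement by combining Proposition~\ref{prop:ABC}, which reduces any nearly-norming polynomial to one factoring through a finite-dimensional subspace, with the fact (noted just before the theorem) that finite-dimensional pairs have the $2$-homogeneous polynomial $\Lpp$. Fix $\e>0$ and $x_0\in S_{c_0}$, and set $A:=\{i\in\N:\,|x_0(i)|=1\}$. Since $x_0\in c_0$, its coordinates tend to $0$, so $A$ is finite and (because $\|x_0\|=1$ is attained) nonempty; hence $\ell_\infty^A$ is finite-dimensional and $x_0^A:=P_A(x_0)$ lies in $S_{\ell_\infty^A}$.

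First I would fix the finite-dimensional threshold. Invoking the $2$-homogeneous polynomial $\Lpp$ for the pair $(\ell_\infty^A,H)$ at the point $x_0^A$, there is $\eta'=\eta'(\e/2,x_0^A)>0$ such that every $R\in\mathcal{P}(^2\ell_\infty^A,H)$ with $\|R\|\leq 1$ and $\|R(x_0^A)\|>1-\eta'$ admits $S\in\mathcal{P}(^2\ell_\infty^A,H)$ with $\|S(x_0^A)\|=1$ and $\|R-S\|<\e/2$ (recall that the paper permits taking $\|R\|\leq 1$). \emph{With $\eta'$ now fixed}, I apply Proposition~\ref{prop:ABC} with parameter $\min\{\e/2,\eta'/2\}$ to obtain $\eta_1>0$, which we further shrink so that $\eta_1<\eta'/2$, and we set $\eta(\e,x_0):=\eta_1$. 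Now let $P\in S_{\mathcal{P}(^2 c_0,H)}$ satisfy $\|P(x_0)\|>1-\eta_1$. By Proposition~\ref{prop:ABC} we have $\|P-P\circ P_A\|<\min\{\e/2,\eta'/2\}$. Writing $\tilde P:=P|_{\ell_\infty^A}$, so that $P\circ P_A=\tilde P\circ P_A$ and $\|\tilde P\|\leq 1$, we estimate
\[
\|\tilde P(x_0^A)\|=\|(P\circ P_A)(x_0)\|\geq \|P(x_0)\|-\|P-P\circ P_A\|>(1-\eta_1)-\tfrac{\eta'}{2}>1-\eta'.
\]
The finite-dimensional $\Lpp$ then yields $\tilde Q\in\mathcal{P}(^2\ell_\infty^A,H)$ with $\|\tilde Q(x_0^A)\|=1$ and $\|\tilde P-\tilde Q\|<\e/2$.

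Setting $Q:=\tilde Q\circ P_A\in\mathcal{P}(^2 c_0,H)$, we get $\|Q(x_0)\|=\|\tilde Q(P_A x_0)\|=\|\tilde Q(x_0^A)\|=1$, and since the projection $P_A$ has norm one,
\[
\|P-Q\|\leq \|P-P\circ P_A\|+\|(\tilde P-\tilde Q)\circ P_A\|\leq \|P-P\circ P_A\|+\|\tilde P-\tilde Q\|<\tfrac{\e}{2}+\tfrac{\e}{2}=\e,
\]
which is exactly the $2$-homogeneous polynomial $\Lpp$ for $(c_0,H)$. The entire substantive difficulty lies in Proposition~\ref{prop:ABC}; what remains is bookkeeping of constants, and the only point requiring genuine care is the \emph{order} in which they are chosen: $\eta'$ must be fixed before applying Proposition~\ref{prop:ABC}, since the projected polynomial $\tilde P$ only \emph{nearly} norms $x_0^A$, and we must ensure $\|\tilde P(x_0^A)\|>1-\eta'$ to feed it into the finite-dimensional $\Lpp$. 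I would also verify carefully the two elementary identities used above, namely $\tilde P\circ P_A=P\circ P_A$ (valid because $P_A z\in\ell_\infty^A$ and $\tilde P=P|_{\ell_\infty^A}$) and $\|(\tilde P-\tilde Q)\circ P_A\|\leq\|\tilde P-\tilde Q\|$ (valid because $\|P_A z\|\leq\|z\|$).
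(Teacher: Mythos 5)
Your proposal is correct and follows essentially the same route as the paper's own proof: apply Proposition~\ref{prop:ABC} to replace $P$ by $P\circ P_A$, feed the resulting nearly-norming polynomial into the finite-dimensional $2$-homogeneous polynomial $\Lpp$ for $(\ell_\infty^A,H)$, and extend the norming polynomial back to $c_0$ via $P_A$ — including the key point, which the paper also observes implicitly through its choice of constants, that the finite-dimensional threshold $\eta'$ must be fixed \emph{before} invoking Proposition~\ref{prop:ABC}. The only cosmetic difference is that you invoke the relaxed ($\|R\|\leq 1$) form of the $\Lpp$ directly, whereas the paper handles the same issue with a $(1-\e)$ rescaling factor, ending with $\|P-Q\|<2\e$ instead of your $\e$; both are equally valid.
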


\begin{proof}
Let $\eps>0$ and $x_0 \in S_{c_0}$ be fixed. Consider the finite set $A :=\{i \in \mathbb{N}:|x_0(i)|=1\}$. Since $\ell_\infty^A$ and $H$ are finite dimensional, we can consider $\tilde{\eta}(\eps, x_0^A) >0$ from the $2$-homogeneous polynomial $\Lpp$ for the pair $(\ell_\infty^A, H)$. We may assume that $(1-\e)\tilde{\eta}(\eps, x_0^A)<\e$. Suppose that 
\[
\|P(x_0)\| > 1 - \min \left\{\eta \left( \min \left\{ \frac{(1-\eps)\tilde{\eta} (\eps,x_0^A)}{2}, \eps \right\}, x_0 \right), \frac{(1-\eps)\tilde{\eta}(\eps, x_0^A)}{2} \right\},
\]
where $\eta(\cdot, x_0) >0$ is chosen from Proposition \ref{prop:ABC}. Then we have that 
\[
\|P-P\circ P_A\| < \min \left\{ \frac{(1-\eps)\tilde{\eta} (\eps,x_0^A)}{2}, \eps \right\}.
\]
Thus, 
\begin{align*}
\|P\circ P_A (x_0)\| &= \|P(x_0) + (P_A (x_0) - P(x_0))\| \\
&> \left( 1 - \frac{(1-\eps)\tilde{\eta}(\eps, x_0^A)}{2} \right) - \| P - P_A\| \\
&> \left( 1 - \frac{(1-\eps)\tilde{\eta}(\eps, x_0^A)}{2} \right) - \frac{(1-\eps)\tilde{\eta}(\eps, x_0^A)}{2} = 1 - (1-\eps) \tilde{\eta}(\eps, x_0^A).
\end{align*}
This implies that there exists $\overline{Q} \in \mathcal{P}(^2 \ell_\infty^A, H)$ with $\| \overline{Q}\| =1$ such that 
\[
\|\overline{Q}(x_0^A)\|=1 \quad \text{and}\quad \| \overline{Q}-P\circ P_A \| < \eps,
\]
where $P\circ P_A$ is viewed as an element of $\mathcal{P}(^2 \ell_\infty^A, H)$. Define $Q \in \mathcal{P}(^2 c_0, H)$ as the natural extension of $\overline{Q}$ to $c_0$, that is, $Q(x)= \overline{Q}(x^A)$. Note that $\|Q\| = \|Q(x_0)\| = 1$ and 
\[
\|Q-P\| \leq \| \overline{Q}-P\circ P_A \| + \| P\circ P_A - P \| < 2\eps,
\]
which completes the proof.
\end{proof}

Notice that the hypothesis of $H$ being finite dimensional was only used to affirm that  the pair $(\ell_\infty^A, H)$ has the $2$-homogeneous polynomial $\Lpp$. If this held for infinite dimensional Hilbert spaces, so it would do Theorem \ref{c0-result}.

\subsection{Vector-valued polynomial $\Lpp$} In the previous subsection, we deduce some differentiability properties of projective (symmetric) tensor products from $\Lpp$ properties for scalar-valued polynomials and multilinear operators. In this subsection, we focus on the $N$-ho\-mo\-ge\-neous polynomial $\Lpp$ in the vector-valued case, although we cannot always get differentiability properties of tensor products from the vector-valued $\Lpp$ (see the comment below Corollary~\ref{property-beta}).
Recall that a Banach space $Y$ has the {\it property $\beta$} with constant $0 \leq \rho < 1$ if there exist $\{y_i: i \in I\} \subset S_Y$ and $\{ y_i^*: i \in I \} \subset S_{Y^*}$ such that
\begin{itemize}
\itemsep0.3em 
\item[(i)] $y_i^*(y_i) = 1$ for all $i \in I$,
\item[(ii)] $|y_i^*(y_j)| \leq \rho < 1$ for all $i, j \in I$ with $i \not= j$,
\item[(iii)] $\|y\| = \sup_{i \in I} |y_i^*(y)|$ for all $y \in Y$.
\end{itemize} 
Classic examples of Banach spaces satisfying the property $\beta$ are $c_0$ and $\ell_{\infty}$.
This property was introduced by Lindenstrauss in \cite{Lin}, in order to obtain examples of spaces $Y$ such that the set of norm attaining operators on $\mathcal{L}(X,Y)$ is dense in the whole space, for every Banach space $X$. In \cite[Proposition~2.8]{DKLM} it is proved that if $X$ is SSD and $Y$ has property $\beta$ then the pair $(X,Y)$ has the $\Lpp$. Next, we prove the polynomial version of this result.

\begin{proposition} \label{propertybeta} Let $X, Y$ be Banach spaces. If $Y$ has property $\beta$ and $(X, \K)$ has the $N$-homogeneous polynomial $\Lpp$, then $(X, Y)$ has the $N$-homogeneous polynomial $\Lpp$.
\end{proposition}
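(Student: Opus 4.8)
The plan is to reduce the vector-valued statement to the scalar hypothesis by exploiting the three defining features of property $\beta$. Fix $P \in S_{\mathcal{P}(^N X, Y)}$ and $x_0 \in S_X$ with $\|P(x_0)\| > 1 - \eta$, where $\eta = \eta(\e, x_0)$ is to be chosen later, and let $\{y_i\}$, $\{y_i^*\}$, $\rho<1$ be the data of property $\beta$ for $Y$. Since property $\beta$(iii) gives $\|P(x_0)\| = \sup_{i} |y_i^*(P(x_0))|$, I first select an index $i_0$ with $|y_{i_0}^*(P(x_0))| > 1 - \eta$. The scalar polynomial $p := y_{i_0}^* \circ P \in \mathcal{P}(^N X, \K)$ then satisfies $\|p\| \le 1$ and $|p(x_0)| > 1 - \eta$.

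First I would apply the $N$-homogeneous polynomial $\Lpp$ of $(X, \K)$ to $p$ (using the change-of-parameters remark, which allows $\|p\|\le 1$ rather than $\|p\|=1$). For a small parameter $\e'>0$, with $\eta := \eta'(\e', x_0)$ the associated $\Lpp$ modulus, this produces $q \in \mathcal{P}(^N X, \K)$ with $|q(x_0)| = 1$ and $\|q - p\| < \e'$. Next I lift $q$ back to $Y$ by setting $Q := P + (q - p) \otimes y_{i_0}$, i.e. $Q(x) = P(x) + (q(x) - p(x)) y_{i_0}$, which is an $N$-homogeneous polynomial into $Y$. Since $\|y_{i_0}\| = 1$ one gets $\|Q - P\| = \|q - p\| < \e'$, and because $y_{i_0}^*(y_{i_0}) = 1$ (property $\beta$(i)) the $i_0$-coordinate becomes $y_{i_0}^*(Q(x_0)) = p(x_0) + (q(x_0) - p(x_0)) = q(x_0)$, whence $\|Q(x_0)\| \ge |q(x_0)| = 1$.

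The main obstacle is that $\|Q(x_0)\|$ need not be \emph{exactly} $1$: the coordinates $i \ne i_0$ could push the norm above $1$. Here property $\beta$(ii) is decisive, since for $i \ne i_0$,
\[
|y_i^*(Q(x_0))| \le |y_i^*(P(x_0))| + \rho\,|q(x_0) - p(x_0)| \le 1 + \rho \e',
\]
so that $1 \le \|Q(x_0)\| \le 1 + \rho\e'$. I would then correct $Q$ by the harmless rescaling $\widetilde{Q} := Q / \|Q(x_0)\|$, which satisfies $\|\widetilde{Q}(x_0)\| = 1$ exactly. Writing $\lambda = 1/\|Q(x_0)\| \in (0,1]$ and splitting $\widetilde{Q} - P = \lambda(Q - P) + (\lambda - 1) P$, and using $\|P\|=1$, gives
\[
\|\widetilde{Q} - P\| \le \lambda \|Q - P\| + (1 - \lambda)\|P\| \le \e' + \frac{\rho\e'}{1 + \rho\e'} < (1 + \rho)\e'.
\]

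Finally, choosing $\e' = \e/2$ (so that $(1+\rho)\e' < \e$ because $\rho < 1$) and declaring $\eta(\e, x_0) := \eta'(\e/2, x_0)$ yields $\|\widetilde{Q}(x_0)\| = 1$ and $\|\widetilde{Q} - P\| < \e$, which is precisely the $N$-homogeneous polynomial $\Lpp$ for $(X, Y)$. The only points requiring care are the legitimacy of applying the scalar $\Lpp$ under $\|p\| \le 1$ instead of $\|p\| = 1$, and checking that the rescaling cost is genuinely of order $\rho\e'$ rather than $\e'$; both are settled by the two displayed estimates, and notably the whole argument never uses $\|q\|$ itself, only $|q(x_0)-p(x_0)|\le\|q-p\|$.
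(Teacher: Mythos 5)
Your reduction to the scalar case (pick the coordinate $i_0$ via property $\beta$(iii), apply the scalar $\Lpp$ to $p = y_{i_0}^*\circ P$, and lift by the rank-one correction $Q = P + (q-p)\otimes y_{i_0}$) is the same basic strategy as the paper's, but your proof has a genuine gap at the end: the polynomial you produce need not attain its norm at $x_0$. The $N$-homogeneous polynomial $\Lpp$, read consistently with the multilinear version stated just above it in Definition~\ref{def Lpp and Loo}, requires a \emph{norm-one} polynomial $\widetilde{Q}$ with $\|\widetilde{Q}(x_0)\| = 1$, i.e.\ $\|\widetilde{Q}\| = \|\widetilde{Q}(x_0)\| = 1$. (Under your weaker reading, where only $\|\widetilde{Q}(x_0)\|=1$ and $\|\widetilde{Q}-P\|<\e$ are demanded, the proposition is trivial for \emph{every} pair: take $\widetilde{Q} = P/\|P(x_0)\|$, which uses neither property $\beta$ nor the scalar hypothesis — a sign that this cannot be the intended definition.) Your $Q$ satisfies $\sup_{x\in B_X}|y_{i_0}^*(Q(x))| = \|q\| = 1$, attained at $x_0$, but for $i\neq i_0$ the coordinate functionals obey only $\sup_{x\in B_X}|y_i^*(Q(x))| \leq 1 + \rho\e'$, and this supremum can genuinely exceed $1$ and be approached at points far from $x_0$ (when $\rho>0$ and $y_i^*\circ P$ nearly attains its norm at some $x_1\neq x_0$ where $(q-p)(x_1)y_i^*(y_{i_0})$ aligns with it). Hence possibly $\|Q\| > \|Q(x_0)\|$, and dividing by $\|Q(x_0)\|$ yields $\|\widetilde{Q}\|>1$; dividing by $\|Q\|$ instead yields $\|\widetilde{Q}(x_0)\|<1$. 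Either way the required conclusion fails.

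The missing idea is the paper's amplification device: instead of lifting $q$, lift $(1+\e)q$, i.e.\ set $\tilde{P}(x) = P(x) + \bigl((1+\e)q(x) - p(x)\bigr)y_{i_0}$, and take the scalar tolerance $\tilde{\e} = \frac{1-\rho}{4}\e$ (the dependence on $\rho$ is essential, not cosmetic). Then the $i_0$-coordinate of $\tilde{P}$ is exactly $(1+\e)q$, whose supremum $1+\e$ is attained at $x_0$, while every other coordinate is bounded on $B_X$ by $\|P\| + |y_i^*(y_{i_0})|\bigl(\e\|q\| + \|q-p\|\bigr) \leq 1 + \rho(\e + 2\tilde{\e}) < 1+\e$. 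The strict domination forces $\|\tilde{P}\| = \sup_{x}|y_{i_0}^*(\tilde{P}(x))| = \|\tilde{P}(x_0)\| = 1+\e$, so $\tilde{P}$ genuinely attains its norm at $x_0$, and after rescaling to norm one the distance to $P$ is still of order $\e$. Your estimates up to the construction of $Q$ are fine (including the use of the change-of-parameters remark to handle $\|p\|\leq 1$), but without the $(1+\e)$-boost, or some equivalent mechanism making the distinguished coordinate dominate uniformly on the whole ball, the argument does not close.
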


\begin{proof} Let $\e \in (0,1) $ and $x_0 \in S_X$ be fixed. Suppose that $P \in \mathcal{P}(^N X, Y)$ with $\|P\| = 1$ satisfies 
	\begin{equation*}
		\|P(x_0)\| > 1 - \min \left\{ \eta(\tilde{\e}, x_0), \tilde{\e}) \right\},
	\end{equation*}
	where $\eta$ is the one in the definition of the $N$-homogeneous polynomial $\Lpp$ for the pair $(X, \K)$ and
	\begin{equation*}
		\tilde{\e} := \left( \frac{1-\rho}{4} \right) \e > 0, 
	\end{equation*}
	where $0 \leq \rho < 1$ is the constant in the definition of property $\beta$. Let us take $\alpha_0 \in \Lambda$ such that 
	\begin{equation*}
		|(P^t y_{\alpha_0}^*)(x_0)| = |y_{\alpha_0}^* (P(x_0))| > 1 - \eta(\tilde{\e}, x_0).
	\end{equation*}
	Then, there exists $Q \in \mathcal{P}(^N X, \K)$ with $\|Q\| = 1$ such that 
	\begin{equation*}
		|Q(x_0)| = 1 \ \ \ \mbox{and} \ \ \ \| Q - P^t y_{\alpha_0}^* \| < \tilde{\e}.
	\end{equation*}	
	Let us define $\tilde{P}\colon X\to Y$ by
	\begin{equation*}
		\tilde{P}(x) := P(x) + \left( (1+ \e) Q - P^t y_{\alpha_0}^* \right)(x) y_{\alpha_0}
	\end{equation*}
	and note that $\| \tilde{P} - P \| < \e +  \tilde{\e}$. We will prove now that $\tilde{P}$ attains its norm at $x_0$. Notice first that for every $x \in X$, we have that 
	\begin{equation*}
		[\tilde{P}^t y_{\alpha_0}^*](x) = y_{\alpha_0}^* (\tilde{P}(x)) = (1+\e)Q(x), 
	\end{equation*}
	which shows that $\tilde{P}^t y_{\alpha_0}^* = (1 + \e)Q$. On the other hand, if $\alpha \not= \alpha_0$ and $x \in B_X$, then 
	\begin{eqnarray*}
		\| [ \tilde{P}^t y_{\alpha}^*](x)\| &\leq& \|P\| + |y_{\alpha}^*(y_{\alpha_0}) | \left( \e \|Q\| + \|Q - P^t y_{\alpha_0}^*\| \right) \\
		&<& 1 + \rho(\e + 2 \tilde{\e}) \\
		&<& 1 + \e. 
	\end{eqnarray*}
	This shows that $\|\tilde{P}\| = \| \tilde{P}^t y_{\alpha_0}^*\| = |y_{\alpha_0}^* (\widetilde{P}(x_0))|$, which implies that $\|\tilde{P}\| = \|\tilde{P}(x_0)\|$ as desired. Therefore, the pair $(X, Y)$ has the $N$-homogeneous polynomial $\Lpp$.	
\end{proof} 
As an immediate consequence of the previous proposition we have the following.
\begin{corollary} \label{property-beta} The following results hold true.
\begin{itemize}
  \item[(i)] If $X$ is a finite dimensional Banach space, then the pairs $(X, c_0)$ and $(X, \ell_{\infty})$ have the $N$-homogeneous polynomial $\Lpp$.
  \item[(ii)] The pairs $(\ell_2, c_0)$ and $( \ell_2, \ell_{\infty})$ have the $N$-homogeneous polynomial $\Lpp$.
    \item[(iii)] The pairs $(c_0, c_0)$ and $(c_0, \ell_{\infty})$ have the $2$-homogeneous polynomial $\Lpp$ in the complex case.
\end{itemize}
\end{corollary}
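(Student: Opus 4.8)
The plan is to read off all three items directly from Proposition~\ref{propertybeta}, whose hypothesis asks only that the codomain have property $\beta$ and that the scalar-valued pair $(X,\K)$ enjoy the $N$-homogeneous polynomial $\Lpp$. Since $c_0$ and $\ell_\infty$ both have property $\beta$ (as recorded in the paragraph preceding the proposition), the only ingredient I need to supply, in each case, is the scalar-valued $N$-homogeneous polynomial $\Lpp$ for the relevant domain space; Proposition~\ref{propertybeta} then automatically upgrades it to the vector-valued statement for $Y \in \{c_0,\ell_\infty\}$.

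I would then treat the three domains in turn. For item (i), with $X$ finite dimensional, I would invoke the compactness argument already recorded just before Theorem~\ref{c0-result}: since $B_{\mathcal{P}(^N X)}$ is norm-compact, a routine contradiction argument gives that $(X,\K)$ has the $N$-homogeneous polynomial $\Lpp$. For item (ii), with $X=\ell_2$, I would use that $\ell_2$ has a micro-transitive norm, so Theorem~\ref{theorem:micro}(ii) yields that $(\ell_2,\K)$ has the $N$-homogeneous polynomial BPBpp; since the BPBpp is precisely the uniform version of the $\Lpp$ (the dependence of $\eta$ on the point is simply dropped), in particular $(\ell_2,\K)$ has the $N$-homogeneous polynomial $\Lpp$. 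As an alternative route one may combine Theorem~\ref{theoremC}(i) with Proposition~\ref{local}(ii). For item (iii), with $X=c_0$, $N=2$, in the complex case, the scalar-valued $\Lpp$ is exactly Theorem~\ref{c0-result} applied to the one-dimensional Hilbert space $H=\C$.

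With the scalar-valued $\Lpp$ secured in each case, I would finish by applying Proposition~\ref{propertybeta} twice per item, once with $Y=c_0$ and once with $Y=\ell_\infty$, obtaining the vector-valued $N$-homogeneous polynomial $\Lpp$ for all six pairs. There is no genuine obstacle here: the corollary is an assembly of Proposition~\ref{propertybeta} with the property-$\beta$ examples $c_0,\ell_\infty$ and the scalar-valued inputs coming from Theorem~\ref{c0-result}, Theorem~\ref{theorem:micro}, and the finite-dimensional compactness argument. The only points meriting a line of justification are that the uniform BPBpp specializes to the local $\Lpp$, and that $\C$ is a legitimate finite-dimensional Hilbert space so that Theorem~\ref{c0-result} indeed applies to give item (iii).
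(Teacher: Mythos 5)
Your proposal is correct and follows exactly the route the paper intends: the corollary is stated there as an immediate consequence of Proposition~\ref{propertybeta}, with property $\beta$ of $c_0$ and $\ell_\infty$ combined with the scalar-valued $\Lpp$ inputs you identify (finite-dimensional compactness, micro-transitivity of the Hilbert norm via Theorem~\ref{theorem:micro}, and Theorem~\ref{c0-result} with $H=\C$). Your observations that the BPBpp trivially specializes to the $\Lpp$ and that the finite-dimensional compactness remark extends from $N=2$ to general $N$ are exactly the small justifications the paper leaves implicit.
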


In view of the isometry
$((\sten \ell_2) \pten \ell_1)^* = \mathcal{P}(^N \ell_2, \ell_\infty)$, the results in Propositions~\ref{thm BPBpp for bilinear and polynomial} and \ref{local}, and the fact that $( \ell_2, \ell_{\infty})$ has the $N$-homogeneous polynomial $\Lpp$, it is natural to ask if $(\sten \ell_2) \pten \ell_1$ is SSD on the set of elementary tensors. However, it is easy to see that this is not possible, since the norm of $\ell_1$ is not SSD (see \cite[Theorem 7]{F} or \cite[Example 1.1]{GGS}). Although in general these notions can not be related in the vector valued case, next we show that when the codomain is a Banach space with micro-transitive norm, they do have a relation.

\begin{proposition}
Let $X$ be a Banach space and $Y$ a Banach space with micro-transitive norm. The pair $(X, Y^*)$ has the $N$-homogeneous polynomial $\Lpp$ if and only if $\sten X \pten Y$ is SSD on the set $V=\{\otimes^N x \otimes y:\,\, \|x\|=\|y\|=1\}$.
\end{proposition}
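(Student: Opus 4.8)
The plan is to combine the Franchetti--Pay\'a characterization (Theorem~\ref{thm:FP}) with the linearization isometry $((\sten X) \pten Y)^* = \mathcal{P}(^N X, Y^*)$. Fix $z := \otimes^N x_0 \otimes y_0 \in V$; since $\|z\|_\pi = \|x_0\|^N \|y_0\| = 1$ and $\langle z, P \rangle = P(x_0)(y_0)$, the space $(\sten X)\pten Y$ is SSD at $z$ precisely when, for every $\e > 0$, there is $\delta > 0$ so that any $P \in B_{\mathcal{P}(^N X, Y^*)}$ with $\re P(x_0)(y_0) > 1 - \delta$ satisfies $\dist(P, D(z)) < \e$, where $D(z) = \{P \in S_{\mathcal{P}(^N X, Y^*)} : P(x_0)(y_0) = 1\}$. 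Throughout I will use that a micro-transitive norm is uniformly convex (and uniformly smooth) by \cite[Corollary~2.13]{CDKKLM}, so that $Y$ is reflexive and the characterization of uniform convexity in \cite[Theorem~2.1]{KL} is available, and that by \cite[Proposition~2.1]{CDKKLM} there is a function $\beta$ controlling the isometries implementing micro-transitivity.

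For the implication ``$(X,Y^*)$ has the $N$-homogeneous polynomial $\Lpp$ $\Rightarrow$ $(\sten X)\pten Y$ is SSD on $V$'', fix $z = \otimes^N x_0 \otimes y_0 \in V$ and $\e > 0$, and suppose $\re P(x_0)(y_0) > 1 - \delta$ with $\|P\| \le 1$. Then $\|P(x_0)\| > 1 - \delta$, so the $\Lpp$ at the point $x_0$ produces $Q_1 \in \mathcal{P}(^N X, Y^*)$ with $\|Q_1(x_0)\| = 1$ and $\|Q_1 - P\|$ as small as desired; in particular $\re Q_1(x_0)(y_0)$ is close to $1$. Applying \cite[Theorem~2.1]{KL} to the pair $(Q_1(x_0), y_0) \in S_{Y^*} \times S_Y$ yields $y_1 \in S_Y$ with $|Q_1(x_0)(y_1)| = 1$ and $\|y_1 - y_0\|$ small; writing $Q_1(x_0)(y_1) = \theta \in \T$, one checks $|\theta - 1|$ is small. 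Now micro-transitivity provides a surjective isometry $T$ of $Y$ with $T(y_0) = y_1$ and $\|T - \id\|$ small, and I set $Q := \bar\theta\,(T^* \circ Q_1)$, i.e.\ $Q(x) = \bar\theta\, T^*(Q_1(x))$. Since $T^*$ is an isometry and $|\theta| = 1$, one has $\|Q\| = 1$, while $Q(x_0)(y_0) = \bar\theta\, Q_1(x_0)(T y_0) = \bar\theta\, Q_1(x_0)(y_1) = 1$, so $Q \in D(z)$. The decomposition $Q - Q_1 = \bar\theta(T^* - \id)Q_1 + (\bar\theta - 1)Q_1$ gives $\|Q - Q_1\| \le \|T - \id\| + |\theta - 1|$, whence $\|P - Q\| < \e$ after choosing the intermediate tolerances appropriately. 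As the moduli coming from $\Lpp$, from \cite{KL}, and from $\beta$ depend only on $x_0$ and $\e$, this establishes SSD at $z$.

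For the converse, fix $x_0 \in S_X$ and $\e > 0$ and take $P \in S_{\mathcal{P}(^N X, Y^*)}$ with $\|P(x_0)\| > 1 - \eta$. Choose $y_0 \in S_Y$ with $|P(x_0)(y_0)| > 1 - \eta$ and write $P(x_0)(y_0) = r\theta$ with $\theta \in \T$; then $\re\langle \otimes^N x_0 \otimes y_0, \bar\theta P\rangle = r > 1 - \eta$. Applying SSD at the point $\otimes^N x_0 \otimes y_0 \in V$ gives $\tilde Q \in D(\otimes^N x_0 \otimes y_0)$ with $\|\bar\theta P - \tilde Q\|$ small, and then $Q := \theta \tilde Q$ satisfies $\|Q - P\| < \e$ and $\|Q(x_0)\| \ge |Q(x_0)(y_0)| = 1$, hence $\|Q(x_0)\| = 1$, which is exactly the conclusion of the polynomial $\Lpp$ at $x_0$.

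The main obstacle lies in the converse: the definition of $\Lpp$ requires the tolerance $\eta = \eta(\e, x_0)$ to be independent of $P$, yet the norming vector $y_0$ above, and hence the point $\otimes^N x_0 \otimes y_0 \in V$ at which SSD is invoked, varies with $P$. To obtain a single $\eta$ I will use that a micro-transitive norm acts transitively on $S_Y$ (the orbits of the isometry group are open, and $S_Y$ is connected when $\dim Y \ge 2$, the remaining case being trivial). Consequently, for a fixed reference $y_* \in S_Y$ and any $y_0 \in S_Y$ there is a surjective isometry $S$ of $Y$ with $S(y_0) = y_*$, and the surjective isometry of $(\sten X)\pten Y$ induced by $\id_{\sten X}$ on the first factor and $S$ on the second carries $\otimes^N x_0 \otimes y_0$ to $\otimes^N x_0 \otimes y_*$. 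Since the SSD modulus is preserved by surjective isometries, the modulus at $\otimes^N x_0 \otimes y_0$ equals the one at $\otimes^N x_0 \otimes y_*$ for every $y_0$, yielding the uniform $\eta(\e, x_0)$ needed to close the argument.
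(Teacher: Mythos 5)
Your proposal is correct, and it rests on the same pillars as the paper's own proof: the linearization $((\sten X)\pten Y)^* = \mathcal{P}(^N X, Y^*)$, the Franchetti--Pay\'a characterization of SSD, and the use of micro-transitivity to control the $Y$-direction. The implementations differ in both halves, though, in ways worth recording. In the forward direction the paper fixes the point $y_0$ and moves the \emph{functional}: it uses uniform smoothness of $Y$ (the BPBpp of the pair $(Y,\K)$) to replace $P(x_0)$ by a functional $y_0^*$ norming $y_0$, and then micro-transitivity of the norm of $Y^*$ (imported from \cite{CDKKLM}) to carry $Q(x_0)$ onto $y_0^*$; you instead fix the functional $Q_1(x_0)$ and move the \emph{point}, using uniform convexity of $Y$ via \cite[Theorem~2.1]{KL} to produce the norming point $y_1$, and micro-transitivity of $Y$ itself, transported through the adjoint $T^*$ and the scalar rotation $\bar\theta$. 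In the converse the paper argues by contrapositive: from a sequence $(P_j)$ witnessing failure of the $\Lpp$ at $x_0$, it composes each $P_j$ with a surjective isometry of $Y^*$ so that every $P_j(x_0)$ becomes a positive multiple of one fixed $y_0^*$, whence the single point $\otimes^N x_0\otimes y_0$ (with $y_0^*(y_0)=1$) witnesses failure of SSD; you argue directly and repair the $P$-dependence of the norming vector $y_0$ by observing that the surjective isometries $\id\otimes S$ of $(\sten X)\pten Y$ preserve the SSD modulus and, by transitivity of the isometry group of $Y$ on $S_Y$ (which follows from micro-transitivity together with connectedness of the sphere), identify all the points $\otimes^N x_0\otimes y$, $y\in S_Y$. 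These two normalization devices are dual to one another and carry the same content, so the proofs are of comparable length and depth; the one tangible advantage of your route is that it only ever uses isometries of $Y$, so it never needs the fact that micro-transitivity passes to $Y^*$, which the paper has to quote from \cite{CDKKLM}.
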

\begin{proof}
First we are going to show that the $N$-homogeneous polynomial $\Lpp$ property implies that the space $\sten X \pten Y$ is SSD on $V$. Analogously as we did in Proposition~\ref{thm BPBpp for a set is unif SSD} with the uniform strong subdifferentiability, it can be proved that $\sten X \pten Y$ is SSD on $V$ if and only if $(\sten X \pten Y, \mathbb{K})$ has the $\Lpp$ for the set $V$. 

Let $\eta$ be the one in the definition of the $N$-homogeneous polynomial $\Lpp$ for the pair $(X, Y^*)$, $\tilde{\eta}$ the one in the definition of the BPBpp for the pair $(Y, \mathbb{K})$ and $\beta(\e)$ the one in the definition of micro-transitivity property of $Y^*$ (see, for instance, \cite[Proposition 3.4]{CDKKLM}). Given $\e$, let $\varphi \in (\sten X \pten Y)^*$, $\|\varphi\|=1$, and $\otimes^N x_0\otimes y_0 \in V$ be such that
\begin{equation}\label{eq vector valued Lpp}
\varphi(\otimes^N x_0\otimes y_0)>1-\min\left\{\eta\left(\frac{\beta(\e)}{2}, x_0\right), \tilde{\eta}\left(\frac{\beta(\e)}{2}\right)\right\}.
\end{equation}
We want to show that there is $\psi \in (\sten X \pten Y)^*$, $\|\psi\|=1$, with $$\psi(\otimes^N x_0 \otimes y_0)=1\quad \text{and}\quad \|\psi-\varphi\|<2\e.
$$
On the one hand, in virtue of the duality $((\sten X) \pten Y)^* = \mathcal{P}(^N X, Y^*)$, there is a norm-one polynomial $P\in \mathcal{P}(^N X, Y^*)$ such that
$
P(x)(y)=\varphi(\otimes^N x\otimes y).
$
By \eqref{eq vector valued Lpp} we have that \linebreak $P(x_0)(y_0)>1- \tilde{\eta}\left(\frac{\beta(\e)}{2}\right)$ and, since the pair $(Y, \mathbb{K})$ has the BPBpp, there exist $y_0^*\in S_{Y^*}$ such that
$$
y_0^*(y_0)=1\quad \text{and}\quad \|y_0^*-P(x_0)\|<\frac{\beta(\e)}{2}.
$$
On the other hand, given that $\|P(x_0)\|>1-\eta\left(\frac{\beta(\e)}{2}, x_0\right)$, there exist a norm-one polynomial $Q\in \mathcal{P}(^N X, Y^*)$ such that
$$
\|Q(x_0)\|=1\quad \text{and}\quad \|Q-P\|<\frac{\beta(\e)}{2}.
$$
In sum, we have $\|y_0^*-Q(x_0)\|<\beta(\e)$ and this implies that there exist a surjective isometry $T\colon Y^*\to Y^*$ such that $T(Q(x_0))=y_0^*$ and $\|T-\id_{Y^*}\|<\e$. Finally, define $\tilde{Q}\colon X\to Y^*$ by \linebreak $\tilde{Q}(x)=T(Q(x))$ and note that
$$
\tilde{Q}(x_0)(y_0)=1\quad \text{and}\quad \|\tilde{Q}-P\|\leq \|\tilde{Q}-Q\|+\|Q-P\|<2\e.
$$
Thus, if $\psi \in (\sten X \pten Y)^*$, $\|\psi\|=1$, is the linear functional associated to the polynomial $\tilde{Q}$, we have
$$
\psi(\otimes^N x_0 \otimes y_0)=1\quad \text{and}\quad \|\psi-\varphi\|<2\e,
$$
which is the desired statement.

Now let us suppose that the pair $(X, Y^*)$ does not have the $N$-homogeneous polynomial $\Lpp$. We want to see that $(\sten X \pten Y, \mathbb{K})$ does not have the $\Lpp$ for the set $V$. By hypothesis, there is $x_0\in S_{X}$, $\e>0$ and $(P_j)_j\subseteq \mathcal{P}(^N X, Y^*)$ norm one polynomials such that
\begin{equation}\label{eq aux}
\|P_j(x_0)\|\rightarrow 1 \,\,\,\text{ and } \,\,\, \dist\left(P_j,\{ P\in S_{\mathcal{P}(^N X, Y^*)}: \|P(x_0)\|=1\}\right) > \e.
\end{equation}
Composing each $P_n$ with an adequate isometry $T_n:Y^* \rightarrow Y^*$, we may assume that $P_n(x_0)$ is a multiple of a fixed $y_0^* \in S_{Y^*}$. For each $j$ define $\varphi_j \in (\sten X \pten Y)^*$ as
$$\varphi_j(z)=\langle y_0, P_j (z)\rangle.$$
Then $\Vert \varphi_j \Vert=\Vert P_j\Vert=1$, $|\varphi_j(\otimes^N x_0 \otimes y_0)|=\|P_j (x_0)\|\rightarrow 1$, and equation \eqref{eq aux} implies that 
$$\dist\left(\varphi_j,D(\otimes^N x_0 \otimes y_0)\right) > \e.$$
Therefore, $(\sten X \pten Y, \mathbb{K})$ does not have the $\Lpp$ for the set $V$, as we wanted to see.
\end{proof}

As a consequence we obtain the following corollary.

\begin{corollary} If $H$ is a Hilbert space, then the following results hold true.
\begin{itemize}
  \item[(i)] The space $(\sten H )\pten H$ is SSD on the set $V=\{\otimes^N x \otimes y:\,\, \|x\|=\|y\|=1\}$.
  \item[(ii)] If, in addition, $H$ is complex and finite dimensional, the space $({\ensuremath{c_0 \widehat{\otimes}_{\pi_s}}} c_0)\pten H$ is SSD on the set $V=\{\otimes^2 x \otimes y:\,\, \|x\|=\|y\|=1\}$.
\end{itemize}
\end{corollary}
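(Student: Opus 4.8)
The plan is to obtain both items as immediate applications of the preceding Proposition, which (for a space $Y$ with micro-transitive norm) identifies the strong subdifferentiability of $\sten X \pten Y$ on $V$ with the $N$-homogeneous polynomial $\Lpp$ of the pair $(X, Y^*)$. In each case I only need to check that the space playing the role of $Y$ is a Hilbert space---so that its norm is micro-transitive and, via the Riesz representation, $Y^*$ is again a Hilbert space isometric to $Y$---and that the corresponding polynomial $\Lpp$ is already available earlier in the paper.

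For item (i), I would take $X = Y = H$ in the preceding Proposition. Then $Y$ has micro-transitive norm and $Y^* = H$, so the pair $(X, Y^*)$ is $(H, H)$ and $\sten X \pten Y = \sten H \pten H$. It remains to verify that $(H, H)$ has the $N$-homogeneous polynomial $\Lpp$. This follows from Theorem~\ref{theorem:micro}(ii): since $H$ has micro-transitive norm, the pair $(H, Z)$ has the $N$-homogeneous polynomial BPBpp for \emph{every} Banach space $Z$, in particular for $Z = H$; as the BPBpp is the uniform version of the $\Lpp$ (its modulus $\eta$ depends only on $\e$), it implies the $\Lpp$. The preceding Proposition then gives that $\sten H \pten H$ is SSD on $V$.

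For item (ii), I would take $X = c_0$, $Y = H$ and $N = 2$, with $H$ now complex and finite dimensional. Again $H$ has micro-transitive norm and $Y^* = H$, so with $N = 2$ the space $\sten X \pten Y$ is $(c_0 \widehat{\otimes}_{\pi_s} c_0) \pten H$ and $(X, Y^*) = (c_0, H)$. Here the needed $2$-homogeneous polynomial $\Lpp$ for the pair $(c_0, H)$ is exactly Theorem~\ref{c0-result}, valid in the complex case for finite dimensional $H$. Applying the preceding Proposition yields that $(c_0 \widehat{\otimes}_{\pi_s} c_0) \pten H$ is SSD on $V$. I expect no genuine obstacle; the only points deserving a word of justification are the isometric identification $H^* \cong H$ (which is conjugate-linear in the complex case, but still produces a Hilbert space with micro-transitive norm) and the elementary implication that the BPBpp forces the $\Lpp$.
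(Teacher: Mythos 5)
Your proposal is correct and follows exactly the paper's intended derivation: both items are read off from the preceding Proposition, using for (i) that Hilbert spaces have micro-transitive norms so Theorem~\ref{theorem:micro}(ii) gives the $N$-homogeneous polynomial BPBpp (hence the $\Lpp$) for $(H,H^*)$, and for (ii) that Theorem~\ref{c0-result} supplies the $2$-homogeneous polynomial $\Lpp$ for $(c_0,H)$ in the complex finite-dimensional case. The side remarks you flag (conjugate-linear Riesz identification still yielding a Hilbert space, and BPBpp $\Rightarrow$ $\Lpp$ since the modulus is uniform) are exactly the right points to note and are handled correctly.
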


\subsection{A negative result on bilinear symmetric forms}

It is a well known fact that the polarization formula gives an isomorphism between the space of $N$-homogeneous polynomials $\mathcal{P} (^NX, Z)$ and the space of $N$-linear symmetric mappings $\mathcal{L}_s (^NX, Z)$. Moreover, in some spaces this isomorphism is in fact an isometry. This is the case when $X$ is a Hilbert space. Then, it is natural to ask if it is possible to obtain similar results to the ones obtained before when we deal with symmetric forms instead of polynomials. In this short subsection we will show, with a simple counterexample, that in Proposition \ref{thm BPBpp for bilinear and polynomial} (ii) we cannot replace the $N$-homogeneous polynomial BPBpp by a similar property using $N$-linear symmetric BPBpp.

Let us begin with some proper definitions and remarks. We say that the pair $(X,Z)$ has the \emph{$N$-linear symmetric Bishop-Phelps-Bollobás point property} ($N$-linear symmetric BPBpp, for short) if given $\e > 0$, there exists $\eta(\e) > 0$ such that whenever $A \in \mathcal{L}_s (^NX, Z)$, $\|A\| = 1$, and $(x_1, \ldots, x_N)\in S_X\times \cdots\times S_X$ satisfy
\begin{equation*}
\left\|A(x_1, \ldots, x_N) \right\| > 1 - \eta(\e),
\end{equation*}
there exists $B \in \mathcal{L}_s (^NX, Z)$ with $\|B\| = 1$ such that 
\begin{equation*} 	
\left\|B(x_1, \ldots, x_N)\right\| = 1 \ \ \ \ \mbox{and} \ \ \ \ \|B - A\| < \e.
\end{equation*} 

When dealing with symmetric multilinear forms, we have the linear isometry 
$$
\mathcal{L}_s(^NX)=\left( \psten X\right)^*,
$$
where we endow the $N$-fold symmetric tensor product with the (full, not symmetric) projective norm $\pi$. Thus, it is reasonable to wonder if an analogous to Proposition \ref{thm BPBpp for bilinear and polynomial} holds, and we can relate the $N$-linear symmetric BPBpp with USSD on the set $U_s=\{\otimes ^N x:\,\, \|x\|=1\}$ (considering projective norm $\pi$). By Theorem \ref{theoremC}, $\sten \ell_2$ is USSD on the set $U_s$ (recall that for Hilbert spaces the projective norm and the symmetric projective norm coincide). But, as we will show bellow, $\ell_2$ does not enjoy the BPBpp for $N$-linear symmetric mappings. Therefore, a proposition similar to \ref{thm BPBpp for bilinear and polynomial} replacing polynomials by symmetric multilinear mapping can not be obtained.

\begin{example}\label{counterexample}
The pair $(\ell_2, \K)$ fails the bilinear symmetric BPBpp. Moreover, the pair $(\ell_2^2, \K)$ fails this property.
\end{example}
\begin{proof}
Suppose, on the contrary, that $(\ell_2^2, \K)$ has the bilinear symmetric BPBpp and consider $A:\ell_2^2 \times \ell_2^2\rightarrow \K$ the symmetric bilinear form given by the matrix
$$
\left(\begin{array}{lr}
1 	& 	 0\\
0	&	1		\\
\end{array}\right).$$
Given $0<\varepsilon<1$, let $0<\eta(\varepsilon)<1$ be the one in the definition of the bilinear symmetric BPBpp. Let $a,b>0$ be such that $a^2+b^2=1$ and
$$A((a,b),(a,-b))=a^2-b^2>1-\eta(\varepsilon).$$
Then, there is a symmetric norm-one bilinear form $B$ such that $$|B((a,b),(a,-b))|=1\quad \text{and}\quad \Vert A- B\Vert <\varepsilon.$$
Now, let
$$
\left(\begin{array}{lr}
d_1 	& 	 d_3	 \\
d_3	&	d_2		\\
\end{array}\right)$$
be the matrix associated to $B$. 
Since $\Vert B\Vert=1$, we have that $|d_1|$ and $|d_2|$ cannot exceed 1. Therefore,
$$ |d_1\, a^2-d_2\,b^2|=|B((a,b),(a,-b))|=1= a^2+b^2$$
imply that $d_1=-d_2$ and $|d_1|=|d_2|=1$. Then, $\Vert A- B\Vert \geq 1$, which is the desired contradiction.
\end{proof}

In contrast with this negative result, it is worth mentioning that complex Hilbert spaces have the BPBpp for several classes of operators: self-adjoints, anti-symmetric, unitary, normal, compact normal, compact and Schatten-von Neumann operators (see \cite[Theorem 3.1]{CDJ}). It also has the Bishop-Phelps-Bollob\'as property for symmetric bilinear mappings and hermitian bilinear mappings and, in the real case, it has the Bishop-Phelps-Bollob\'as property for symmetric bilinear mapping (see \cite{GLM}). Also, although Hilbert spaces fail to have  the bilinear symmetric BPBpp, the pair $(\ell_2^d, \K)$ enjoys a local Bishop-Phelps-Bollob\'as type property for symmetric bilinear forms by compactness.

 

\noindent 
\textbf{Acknowledgements:} The authors are thankful to Gilles Godefroy for suggesting the topic of the paper. Also they would like to thank Richard Aron, Petr Hájek, Sun Kwang Kim and Abraham Rueda Zoca for fruitful conversations about some specific parts of the paper during the writing procedure. 

\noindent 
\textbf{Funding information:} The first author was supported by the Spanish AEI Project PID2019 - 106529GB - I00 / AEI / 10.13039/501100011033 and also by Spanish AEI Project PID2021-122126NB-C33 / MCIN / AEI / 10.13039 / 501100011033 (FEDER). The second was supported by National Research Foundation of Korea (NRF-2019R1A2C1003857), by POSTECH Basic Science Research Institute Grant (NRF-2021R1A6A1A10042944) and by a KIAS Individual Grant (MG086601) at Korea Institute for Advanced Study. The third author was supported by  CONICET PIP 11220130100329CO. Finally, the forth was supported by CONICET PIP 11220200101609CO and  ANPCyT PICT 2018-04250.




\end{document}